\DeclareFontFamily{T1}{wncyr}{}
\DeclareFontShape{T1}{wncyr}{m}{n}{%
  <5><6><7><8><9>gen*wncyr%
  <10><10.95><12><14.4><17.28><20.74><24.88>wncyr10}{}
\theoremstyle{plain}
\newtheorem{thm}{Theorem}[section]
\newtheorem{lem}[thm]{Lemma}
\newtheorem{prop}[thm]{Proposition}
\newtheorem{coro}[thm]{Corollary}
\newtheorem*{thma}{Theorem}
\theoremstyle{definition}
\newtheorem{defn}[thm]{Definition}
\newtheorem{exm}[thm]{Example}
\theoremstyle{remark}
\newtheorem{rem}[thm]{Remark}
\newcommand{\on}{\operatorname}
\newcommand{\mc}{\mathcal}
\newcommand{\mb}{\mathbf}
\newcommand{\mf}{\mathfrak}
\newcommand{\id}{\ensuremath{\mathop{\rm id\,}\nolimits}}
\newcommand{\Z}{\mathbb{Z}}
\newcommand{\C}{\mathbb{C}}
\newcommand{\Q}{\mathbb{Q}}
\newcommand{\K}{\mathbb{K}}
\newcommand{\A}{\mathbb{A}}
\newcommand{\Sn}[1][n]{\mf S_{#1}}
\newcommand{\vp}{\varphi}
\newcommand{\ve}{\varepsilon}
\newcommand{\ol}{\overline}
\newcommand{\ul}{\underline}
\newcommand{\st}{\stackrel}
\newcommand{\mx}{\mbox}
\newcommand{\geqs}{\geqslant}
\newcommand{\leqs}{\leqslant}
\newcommand{\w}{\wedge}
\newcommand{\wt}{\widetilde}
\newcommand{\sm}{\setminus}
\newcommand{\ds}{\displaystyle}
\newcommand{\ra}{\rightarrow}
\newcommand{\lra}{\longrightarrow}
\newcommand{\im}{\operatorname{im}}
\newcommand{\Gr}{\on{Gr}}
\newcommand{\HH}{\on{H}}
\newcommand{\CH}{\on{CH}}
\newcommand{\p}{\mathbb{P}}
\newcommand{\Sp}{\on{Spec}}
\newcommand{\Spec}{\Sp}
\newcommand{\Supp}{\on{Supp}}
\newcommand{\dme}[1][]{\mc{DM}_{gm}^{eff}}
\newcommand{\dm}[1][]{\mc{DM}_{gm}}
\newcommand{\DM}{\on{DM}}
\newcommand{\MTM}{\on{MTM}}
\newcommand{\SmCorr}[1][]{\on{\SmCorr}_{#1}}
\newcommand{\pst}[1]{\p^1 #1 \setminus \{0,1,\infty\} }
\newcommand{\ps}{\pst{}}
\newcommand{\we}{wt}
\newcommand{\Nc}[1][X]{{\mc N}^{\bullet}_{#1}}
\newcommand{\cN}[1][1]{{\mc N}^{#1}_{X}}
\newcommand{\cNg}[2]{{\mc N}^{#2}_{#1}}
\newcommand{\Nge}[2]{{\mc N}^{eq, \,#2}_{#1}}
\newcommand{\Ngqf}[2]{{\mc N}^{q.f. \,#2}_{#1}}
\newcommand{\Mm}[1][X]{\cLie_{#1}}
\newcommand{\Alt}{{\mc A}lt}
\newcommand{\Lcal}{\mc L}
\newcommand{\Lcz}{\Lcal_{0}^1}
\newcommand{\Lco}{\Lcal_{1}^0}
\newcommand{\Lc}{\Lcal^{0}}
\newcommand{\Lcu}{\Lcal^{1}}
\newcommand{\Lcgqf}[2]{\mc L_{qf,\, #1}^{#2}}
\newcommand{\Lczqf}{\Lcgqf{0}{}}
\newcommand{\Lcoqf}{\Lcgqf{1}{}}
\newcommand{\Lcqf}[1]{\Lcgqf{#1}{0}}
\newcommand{\Lcuqf}[1]{\Lcgqf{#1}{1}}
\newcommand{\AL}{R_{\Lc}}
\newcommand{\ALu}{R_{\Lcu}}
\newcommand{\RAL}{R_{\Ac{}}}
\newcommand{\RALu}{R_{\Acu{}}}
\newcommand{\Tc}[2][0]{T_{#2^*}^{#1}}%{{\mf T}_{#1}}
\newcommand{\Tcu}[1]{\Tc[1]{#1}}%{\Tc{#1}^{1}}
\newcommand{\Tcg}{T}
\newcommand{\Ac}[2][0]{A_{#2}^{#1}}%{{\mf T}_{#1}}
\newcommand{\Acu}[1]{\Ac[1]{#1}}%{\Tc{#1}^{1}}
\newcommand{\Acz}{\Acu 0}
\newcommand{\Aco}{\Ac 1}
\newcommand{\Zc}{\mc Z}
\newcommand{\Ze}[1][\bullet]{\mc Z_{eq}^{#1}}
\newcommand{\Zqf}[1][\bullet]{\mc Z_{q.f.}^{#1}}
\newcommand{\ap}{{a'}}
\newcommand{\bp}{{b'}}
\newcommand{\dN}{\partial}
\newcommand{\da}[1][]{\partial_{#1}}
\newcommand{\dL}{d_{Lie}}
\newcommand{\dc}{d_{cy}}
\newcommand{\dggl}{d_{ggl}}
\newcommand{\cLie}{\textrm{co}\mf{L}}
\newcommand{\Lie}{\on{Lie}}
\newcommand{\Ttr}{\mc T^{tri}}
\newcommand{\Ttrg}[1][x]{\mc T^{tri}_{#1}}
\newcommand{\Ttq}{\Q[\mc T^{tri}]}
\newcommand{\Ttg}[1][x]{\Q[\mc T^{tri}_{#1}]}
\newcommand{\Tts}[1][]{\Q[\mc T^{tri,<}_{#1}]}
\newcommand{\TL}[1][]{\mc T^{Lie}_{#1}}
\newcommand{\Tcl}[1][]{\mc T^{coL}_{#1}}
\newcommand{\Bs}[1][]{\mf B^{<}_{#1}}
\newcommand{\T}[1]{T_{#1^*}}
\newcommand{\tw}[1]{\tau_{#1}}
\newlength{\ledge}
\newlength{\sibdis}
\newlength{\ecan}
\newlength{\ecas}
\newlength{\ecae}
\newlength{\ecao}
\newlength{\eca}
\newlength{\lbullet}
\tikzset{deftree/.style={level distance=\ledge,sibling distance=\sibdis}}
\tikzset{edgesp/.style={level distance=#1*\ledge,sibling distance=#1\sibdis}}
\tikzset{labf/.style={mathsc,yshift=-\eca}}
\tikzset{labfs/.style={mathss,yshift=-\eca}}
\tikzset{labr/.style={mathsc,yshift=\eca}}
\tikzset{labrs/.style={mathss,yshift=\eca}}
\tikzset{math mode/.style = {execute at begin node=$, execute at end node=$}}
\tikzset{mathscript mode/.style =%
 {execute at begin node=$\scriptstyle , execute at end node=$}}
\tikzset{math/.style = {execute at begin node=$, execute at end node=$}}
\tikzset{mathsc/.style =%
 {execute at begin node=$\scriptstyle , execute at end node=$}}
\tikzset{mathss/.style =%
 {execute at begin node=$\scriptscriptstyle , execute at end node=$}}
\tikzset{root/.style={draw,circle,inner sep=1pt,execute at begin node=$\bullet,
    execute at end node=$}}
\tikzset{roottest/.style={draw,circle,inner sep=#1pt}}
\tikzset{roots/.style={draw,circle,inner sep=2pt}}
\tikzset{bull/.style={fill,circle,minimum size=2pt,inner sep=0pt}}
\tikzset{leaf/.style={minimum size=2pt,inner sep=1pt}}
\tikzset{leafb/.style={minimum size=0pt,inner sep=0pt}}
\tikzset{intvertex/.style={mathsc,fill,circle,minimum size=0.6ex, inner sep=0pt}}
\tikzset{Reda/.style={-,double distance=0.3ex, draw=black}}
\tikzset{N/.style={-,thin,draw=black}}
\tikzset{Nd/.style={-,dotted, thin,draw=black}}
\newcommand{\prodracine}{%
\begin{tikzpicture}[%
baseline={([yshift=-0.5ex]current bounding box.center)},scale=0.4]
\tikzstyle{every child node}=[mathscript mode,minimum size=0pt, inner sep=0pt]
\node[minimum size=0pt, inner sep=0pt]%[draw,circle,inner sep=0.5pt](root) 
{}
[level distance=1.5em,sibling distance=3ex]
child {node[fill, circle, minimum size=2pt,inner sep=0pt]{}[level distance=1.5em]
  child{ node{}}
  child{ node{}}
};\end{tikzpicture}
}
\DeclareMathOperator{\prac}{\prodracine}
\newcommand{\rootaa}[3][1]{
\begin{tikzpicture}[baseline=(current bounding box.center),scale=#1]
\tikzstyle{every child node}=[mathscript mode,minimum size=0pt, inner sep=0pt]
%[intvertex]%[fill,circle,minimum size=3pt, inner sep=0pt]
\node[roots](root) {}
[deftree]
%[mathscript mode]%[fill,circle,minimum size=3pt, inner sep=0pt]
%\node[draw,circle,inner sep=0.5pt](root) 
%{$\bullet$}
%[level distance=1.5em,sibling distance=3ex]
child {node[fill, circle, minimum size=2pt,inner sep=0pt]
           {}[level distance=1.5em]
  child{ node[leaf]{#2}}
  child{ node[leaf]{#3}}
};
\fill (root.center) circle (1/#1*\lbullet) ;
\end{tikzpicture}
}
\newcommand{\rootaar}[4][1]{
\begin{tikzpicture}[baseline=(current bounding box.center),scale=#1]
\tikzstyle{every child node}=[mathscript mode,minimum size=0pt, inner sep=0pt]
%[intvertex]%[fill,circle,minimum size=3pt, inner sep=0pt]
\node[roots](root) {}
[deftree]
%[mathscript mode]%[fill,circle,minimum size=3pt, inner sep=0pt]
%\node[draw,circle,inner sep=0.5pt](root) 
%{$\bullet$}
%[level distance=1.5em,sibling distance=3ex]
child {node[fill, circle, minimum size=2pt,inner sep=0pt]
           {}[level distance=1.5em]
  child{ node[leaf]{#2}}
  child{ node[leaf]{#3}}
};
\fill (root.center) circle (1/#1*\lbullet) ;
\node[mathsc, xshift=-1ex] at (root.west) {#4};
\end{tikzpicture}
}
\newcommand{\roota}[2][1]{
\begin{tikzpicture}[baseline=(current bounding box.center),scale=#1]
\tikzstyle{every child node}=[mathscript mode,minimum size=0pt, inner sep=0pt]%[intvertex]%[fill,circle,minimum size=3pt, inner sep=0pt]
\node[roots](root) {}
[deftree]
child {node[leaf](1){#2} 
};
\fill (root.center) circle (1/#1*\lbullet) ;
%\node[mathsc, xshift=-1ex] at (root.west) {};
%\node[labf] at (1.south){#2};
\end{tikzpicture}
}
\newcommand{\rootar}[3][1]{
\begin{tikzpicture}[baseline=(current bounding box.center),scale=#1]
\tikzstyle{every child node}=[mathscript mode,minimum size=0pt, inner sep=0pt]%[intvertex]%[fill,circle,minimum size=3pt, inner sep=0pt]
\node[roots](root) {}
[deftree]
child {node[leaf](1){#2} 
};
\fill (root.center) circle (1/#1*\lbullet) ;
\node[mathsc, xshift=-1ex] at (root.west) {#3};
%\node[labf] at (1.south){#2};
\end{tikzpicture}
}
\newcommand{\TLx}[2][0.6]{
\begin{tikzpicture}[baseline=(current bounding box.center),scale=#1]
\tikzstyle{every child node}=[intvertex]
\node[roots](root) {}
[deftree]
child {node(1){} 
}
;%%%
\fill (root.center) circle (1/#1*\lbullet) ;
\node[mathsc, xshift=-1ex] at (root.west) {#2};
\node[labf] at (1.south){0};
\end{tikzpicture} %
}
\newcommand{\TLy}[2][0.6]{
\begin{tikzpicture}[baseline=(current bounding box.center),scale=#1]
\tikzstyle{every child node}=[intvertex]
\node[roots](root) {}
[deftree]
child {node(1){} 
}
;%%%
\fill (root.center) circle (1/#1*\lbullet) ;
\node[mathsc, xshift=-1ex] at (root.west) {#2};
\node[labf] at (1.south){1};
\end{tikzpicture} %
}
\newcommand{\TLxy}[2][0.6]{
\begin{tikzpicture}[baseline=(current bounding box.center),scale=#1]
\tikzstyle{every child node}=[intvertex]
\node[roots](root) {}
[deftree]
child {node{} 
    child{ node(1){}}
    child{ node(2){}} 
}
;%%%
\fill (root.center) circle (1/#1*\lbullet) ;
\node[mathsc, xshift=-1ex] at (root.west) {#2};
\node[labf] at (1.south){0};
\node[labf] at (2.south){1};
\end{tikzpicture} 
}
\newcommand{\TLxxy}[2][0.6]{
  \begin{tikzpicture}[baseline=(current bounding box.center),scale=#1]
 \tikzstyle{every child node}=[intvertex]
 \node[roots](root) {}
 [deftree]
 child {node{}
   child[edgesp=2] {node(1){}}
   child {node{} 
     child{ node(2){}}  
     child{ node(3){}} 
   }
}
;
\fill (root.center) circle (1/#1*\lbullet) ;
\node[mathsc, xshift=-1ex] at (root.west) {#2};
\node[labf] at (1.south){0};
\node[labf] at (2.south){0};
\node[labf] at (3.south){1};
 \end{tikzpicture}
}
\newcommand{\TLxyy}[2][0.6]{
\begin{tikzpicture}[baseline=(current bounding box.center),scale=#1]
\tikzstyle{every child node}=[intvertex]
\node[roots](root) {}
[deftree]
child {node{} 
  child{node{}
    child{ node(1){}}
    child{ node(2){}}}  
  child[edgesp=2]{ node(3){}} 
}
;%%%
\fill (root.center) circle (1/#1*\lbullet) ;
\node[mathsc, xshift=-1ex] at (root.west) {#2};
\node[labf] at (1.south){0};
\node[labf] at (2.south){1};
\node[labf] at (3.south){1};
\end{tikzpicture}
}
\newcommand{\TLxxxy}[2][0.6]{
 \begin{tikzpicture}[baseline=(current bounding box.center),scale=#1]
\tikzstyle{every child node}=[intvertex]
\node[roots](root) {}
[deftree]
child{node{}
  child [edgesp=3]{node(1){}}
  child{node{}
    child [edgesp=2]{node(2){}}
    child {node{} 
      child{ node(3){}}  
      child{ node(4){}} 
    }
  }
};

\fill (root.center) circle (1/#1*\lbullet) ;
\node[mathsc, xshift=-1ex] at (root.west) {#2};
\node[labf] at (1.south){0};
\node[labf] at (2.south){0};
\node[labf] at (3.south){0};
\node[labf] at (4.south){1};
\end{tikzpicture}
}
\newcommand{\TLxxyy}[2][0.6]{
\begin{tikzpicture}[baseline=(current bounding box.center),scale=#1]
\tikzstyle{every child node}=[intvertex]
\node[roots](root) {}
[deftree]
child{node{}
  child[edgesp=3]{node(1){}}
  child {node{} 
    child{node{}
      child{ node(2){}}
      child{ node(3){}}
    }  
    child[edgesp=2]{ node(4){}} 
  }
};
\fill (root.center) circle (1/#1*\lbullet) ;
\node[mathsc, xshift=-1ex] at (root.west) {#2};
\node[labf] at (1.south){0};
\node[labf] at (2.south){0};
\node[labf] at (3.south){1};
\node[labf] at (4.south){1};
\end{tikzpicture} 
+
\begin{tikzpicture}[baseline=(current bounding box.center),scale=#1]
\tikzstyle{every child node}=[intvertex]
\node[roots](root) {}
[deftree]
child {node{} 
  child{node{}
    child[edgesp=2]{node(1){}}
    child{node{}
      child{ node(2){}}
      child{ node(3){}}
    }  
  }
  child[edgesp=3]{ node (4){}} 
};
\fill (root.center) circle (1/#1*\lbullet) ;
\node[mathsc, xshift=-1ex] at (root.west) {#2};
\node[labf] at (1.south){0};
\node[labf] at (2.south){0};
\node[labf] at (3.south){1};
\node[labf] at (4.south){1};
\end{tikzpicture} 
}
\newcommand{\TLxxyya}[2][0.6]{
\begin{tikzpicture}[baseline=(current bounding box.center),scale=#1]
\tikzstyle{every child node}=[intvertex]
\node[roots](root) {}
[deftree]
child{node{}
  child[edgesp=3]{node(1){}}
  child {node{} 
    child{node{}
      child{ node(2){}}
      child{ node(3){}}
    }  
    child[edgesp=2]{ node(4){}} 
  }
};
\fill (root.center) circle (1/#1*\lbullet) ;
\node[mathsc, xshift=-1ex] at (root.west) {#2};
\node[labf] at (1.south){0};
\node[labf] at (2.south){0};
\node[labf] at (3.south){1};
\node[labf] at (4.south){1};
\end{tikzpicture} 
}
\newcommand{\TLxyyy}[2][0.6]{
\begin{tikzpicture}[baseline=(current bounding box.center),scale=#1]
\tikzstyle{every child node}=[intvertex]
\node[roots](root) {}
[deftree]
child {node{} 
  child{node{}
    child{node{}
      child{node (1){}}
      child{ node (2){}}
    }
    child[edgesp=2]{ node (3){}}
  }  
  child[edgesp=3]{ node (4){}} 
};
\fill (root.center) circle (1/#1*\lbullet) ;
\node[mathsc, xshift=-1ex] at (root.west) {#2};
\node[labf] at (1.south){0};
\node[labf] at (2.south){1};
\node[labf] at (3.south){1};
\node[labf] at (4.south){1};
\end{tikzpicture}
}
\title[The cycle complex over $\p^1$ minus $3$ points]
{The cycle complex over $\p^1$ minus $3$ points 
: toward multiple zeta value cycles.}
\author{Ismael Soud{\`e}res}
\address{Fachbereich Mathematik \\
Universität Duisburg-Essen, Campus Essen \\
Universitätsstrasse 2\\
45117 Essen\\
Germany \\
ismael.souderes@uni-due.de}
\date{\today}
\thanks{This work was partially supported by DFG grant SFB/TR45 and by
  Prof. Levine's Humboldt Professorship. 
I would like to thank P. Cartier for his
  attention to my work, and H. Gangl for all his help, his comments and his
 insistent demand for me to use tree-related structures, which allowed me to
 understand the Lie-like underlying combinatorics. Finally, none of this would
 have been possible without M. Levine's patience and his explanations.}
\subjclass[2010]{11G55 (05C05 14C25 33B30)}
\begin{document}
%\begin{multicols}{2}
\begin{abstract}

In this paper, we construct a family of algebraic cycles in Bloch's
cycle complex over $\p^1$ minus three points, which are expected to
correspond to multiple polylogarithms in one variable. Elements in this family
of weight $p$ belong to the cubical cycle group of codimension $p$ in $(\p^{1}\sm
\{0,1,\infty\}) \times (\p^1\sm\{1\})^{2p-1}$ and in weight greater 
than or equal to $2$, 
 they naturally extend as equidimensional cycles over $\A^1$. 

Thus, we can consider their fibers at the point $1$. This is one of the 
main differences with the work of Gangl, Goncharov and Levin. Considering the 
fiber of our cycles at $1$ makes it possible to view these cycles as those
corresponding to weight $n$ multiple zeta values which are viewed here as the
values at $1$ of multiple polylogarithms.

After the introduction, we recall some properties of Bloch's cycle
complex, and explain
the difficulties on a few examples. Then a large section is devoted to the
combinatorial situation, essentially involving the combinatorics of trivalent
trees in relation with the structure of the free Lie algebra on two
generators. In the last section, two families of cycles are constructed as
solutions to a ``differential system'' in Bloch's cycle complex. One of these
families contains only cycles with empty fiber at $0$; these correspond to
multiple polylogarithms.  
 \end{abstract}
\maketitle

\tableofcontents
\section{Introduction}
\subsection*{General goals}

%{\color{blue}

 This paper  is a first and crucial step \emph{toward} motivic multiple zeta
 values via algebraic cycles. 

In the algebraic cycles setting, motives
 arise as comodules over the Tannakian Lie coalgebra given by the $\HH^0$ of
 the bar construction over a differential graded  algebra $\cNg{}{\bullet}$, modulo
 products. The algebra $\cNg{}{\bullet}$ is built out of algebraic cycles.
 The Lie coalgebra generating its $1$-minimal model is isomorphic to the above
 Tannakian Lie coalgebra. 

In order to obtain a motive, that is a comodule, it is enough to have an element
in the relevant Tannakian Lie  coalgebra. Then the motive is the comodule
cogenerated by 
this element. Since the Tannakian 
 Lie coalgebra is a ``$\HH^0$ modulo product'', we write one of its elements as
 a class $[\Lcal^B]$ in this $\HH^0$. This class can be represented in the bar
 construction over 
 $\cNg{}{\bullet}$ by an element $\Lcal^B$. The element  $\Lcal^B$ is
 essentially determined by its projection onto
 its tensor degree $1$ part, which gives an algebraic cycle $\Lcal$ in
 $\cNg{}{\bullet}$. The cycle $\Lcal$ has a 
 decomposable  boundary  because the
 element $\Lcal^B$ leads to a class in the $\HH^0$. 

Hence a first step toward explicit motives via algebraic cycles is to build
algebraic cycles in   $\cNg{}{\bullet}$ having a decomposable boundary. The
main result of this paper (Theorem \ref{thm:cycleLcLcu}) provides such algebraic
cycles denoted by $\Lc_W$; here the indexing set $W$ consists in Lyndon
words. It is shown in \cite{SouBarbase} that the motives attached to these cycles
generate the Lie coalgebra associated to the Deligne-Goncharov fundamental group.

However  lifting algebraic cycles to objects in the bar construction requires, in
general,  a strong combinatorial or algebraic control of the boundaries
involved. In our context, this control is insured (Theorem 
\ref{thm:cycleLcLcu}) by the algebraic and combinatorial
structure of Ihara's special derivations studied at Proposition \ref{dcyTw}.

The next steps in order to obtain motivic multiple zeta values are:% the
%following:
\begin{enumerate}
\item %we lift cycles $\Lc_W$ to elements in $B(\cNg{X}{\bullet})$, the bar
 % construction over $\cNg{X}{\bullet}$. This goes as follows
%
%Writing $\Tcl[1;x]$ the Lie coalgebra
%  associated to Ihara's special derivation encoding our combinatorial
%  setting, 
Our main theorem  makes it possible to build a commutative
  differential graded algebra  morphism between the cobar construction over
  the Lie coalgebra associated to Ihara's special derivations (combinatorial
  structure) and $\cNg{}{\bullet}$ (algebraic cycles). Then the unit of the bar/cobar adjunction
  lifts  cycles $\Lc_W$, built in this paper, to elements of $B(\cNg{}{\bullet})$, the bar
  construction over $\cNg{}{\bullet}$.
%\item Because of geometric constraints, the above theorem proves the existence
 % of  two family of
 % cycles : the family of cycle $\Lc_W$ with an empty fiber at $0$ and the family
 % of cycle $\Lcu_W$ with an empty fiber at $1$. We show that at the motivic
 % level (i.e. in the Lie coalgebra) the bar classes coming from cycles
 % $\Lcu_W$ reduce to the one given by 
 % cycles $\Lc_W$ up to constant cycles. This uses an induction on the weight
 % $p$, the differential controlled by Ihara's cobracket  and the interplay
 % between cycles over $\A^1$ and cycles over $\ps$. 
\item Then %the combinatorial 
Proposition \ref{dcyTw} shows that the motivic cobracket of  elements induced by cycles $\Lc_W$ is exactly given by Ihara's cobracket. As a consequence, 
  the family of motivic elements (i.e. in the Lie coalgebra) arising from cycles
  $\Lc_W$ generates the Lie 
  coalgebra associated to the Deligne-Goncharov motivic fundamental group. This uses
  the Lie coalgebra version of the comparison between the Tannakian group of
  the category of mixed Tate motives and Deligne-Goncharov motivic fundamental group
  (cf. \cite{LEVTMFG}). 
\end{enumerate}
The above steps can be deduced from this paper using the bar/cobar adjunction and
classical motivic theory. In particular they do not require any other specific
algebraic or geometric structure than 
the ones developed in Proposition \ref{dcyTw} and Theorem \ref{thm:cycleLcLcu}.  

A more recent work, \cite{SouBarbase}, is devoted to proving the above
assertions. In particular, the cycles $\Lc_W$,
built in this paper, induce motivic elements which generate the Lie coalgebra
associated to the Deligne-Goncharov motivic fundamental group (see   \cite{SouBarbase}).

The Lie coalgebra associated to the Deligne-Goncharov motivic fundamental group
contains the multiple polylogarithms. Looking for multiple zeta values we want
the above construction to be compatible with specialization at ``$1$''. This is 
possible by our main result (Theorem \ref{thm:cycleLcLcu}) which gives
cycles that are not only fiberered  over $\ps$ but are also fibered over
$\A^1$. In particular cycles built at Theorem \ref{thm:cycleLcLcu} can be
specialized at $1$. This is a major improvement on previous attempt to build
algebraic cycles attached to \emph{multiple} polylogarithms.

The rest of this introduction gives, first, the general background about multiple
polylogarithms and algebraic cycles. Then it presents the general strategy and
states our mains results : Theorem \ref{thm:cycleLcLcu} which builds algebraic
cycles $L_W$ under the algebraic control given by Proposition \ref{dcyTw} which
concerns the Lie coalgebra associated to Ihara's special derivations.
%}
\subsection{Multiple polylogarithms}
Multiple polylogarithm functions are defined (cf. \cite{PAGGon}) by the power
series
\[
Li_{k_1, \ldots, k_m}(z_1,\ldots, z_m)=\sum_{n_1> \cdots > n_m>0} 
\frac{z_1^{n_1}}{n_1^{k_1}} \, \frac{z_2^{n_2}}{n_2^{k_2}} \cdots
\frac{z_m^{n_m}}{n_m^{k_m}}
\qquad (z_i \in \C, \, |z_i|<1),
\]
where the $k_i$'s are strictly positive integers.
They admit an analytic continuation to a Zariski open subset of $\C^m$. The case
$m=1$ is nothing but the classical \emph{polylogarithm} functions. The case
$z_1=z$ and $z_2=\cdots=z_m=1$ gives a one-variable version of the multiple
polylogarithm function 
\[
Li^{\C}_{k_1, \ldots , k_m}(z)=Li_{k_1, \ldots, k_m}(z,1,\ldots, 1)=\sum_{n_1> \cdots > n_m>0} 
\frac{z^{n_1}}{n_1^{k_1} n_2^{k_2} \cdots n_m^{k_m}}.
\] 
When $k_1$ is greater or equal to $2$, the series converges as $z$ 
tends to $1$, where we recover the multiple zeta value
\[
\zeta(k_1,\ldots, k_m) =Li^{\C}_{k_1, \ldots , k_m}(1) = 
Li_{k_1, \ldots, k_m}(1,\ldots, 1)=\sum_{n_1> \cdots > n_m>0} 
\frac{1}{n_1^{k_1} n_2^{k_2} \cdots n_m^{k_m}}.
\]

To the $m$-tuple of positive integers $(k_1, \ldots, k_m)$ of weight $n=\sum k_i$, 
we associate an $n$-tuple of $0$'s and $1$'s
\[
(\ve_n,\ldots,  \ve_1):=(\underbrace{0,\ldots , 0}_{k_1-1\mx{ \scriptsize  times}}, 1,\ldots
  ,\underbrace{0,\ldots ,0}_{k_m-1\mx{ \scriptsize  times}},
  1).
\]
This allows us to write  multiple polylogarithms as iterated integrals
($z_i\neq 0$ for all $i$)
\[
Li_{k_1,\ldots, k_m}(z_1, \ldots,z_m)=(-1)^m \int_{\Delta_{\gamma}} \frac{d
  t_1}{t_1-\ve_1 x_1} \w \cdots \w 
\frac{d t_n}{t_n-\ve_n x_n},
\]
where $\gamma$ is a path from $0$ to $1$ in $\C \sm \{x_1, \ldots, x_n\}$. The
integration domain $\Delta_{\gamma}$ is the associated real simplex consisting
of all $m$-tuples of points $(\gamma(t_1), \ldots, \gamma(t_n))$ with $t_i<t_j$
for $i<j$,  where  we have set %
\begin{multline*}
(x_n,\ldots, x_1):= \\
(\underbrace{z_1^{-1},\ldots , z_1^{-1}}_{k_1\mx{ \scriptsize  times}}
,\underbrace{(z_1z_2)^{-1},\ldots ,(z_1 z_2)^{-1}}_{k_2\mx{ \scriptsize  times}}
,\ldots
,\underbrace{(z_1\cdots z_m)^{-1},\ldots ,(z_1\cdots z_m)^{-1}}_{k_m\mx{ \scriptsize  times}} ).
\end{multline*}

As shown in \cite{GSFGGon}, iterated integrals have Hodge/motivic avatars living
in a Hopf algebra equivalent to the Tannakian Hopf algebra of mixed
$\Q$-Hodge-Tate structures. Working with these motivic/Hodge iterated integrals 
reveals more structure -- in particular the coproduct, which is not visible on the
level of numbers -- conjecturally without losing any information. 

\subsection{Multiple polylogarithms and algebraic cycles}
The relations between the motivic world and the higher Chow groups on
the one hand (e.g. \cite{KTMMLevine,MCHCGVoe}) and the relations
between multiple polylogarithms and regulators (e.g. \cite{PDZKthZag, PRACGon}) on the other, 
suggest the question of whether there exist avatars of the
multiple polylogarithms in terms of algebraic cycles.

Given a number field $\K$, in \cite{BKMTM}, Bloch and Kriz used
algebraic cycles to construct a graded Hopf algebra. They conjectured that this
Hopf algebra was isomorphic to the Tannakian
Hopf algebra 
of the category of mixed Tate motives over $\K$, a result that was later proved by
Spitzweck in 
\cite{SpitzweckSCVTM} (as presented in \cite{KTMMLevine}). Moreover, Bloch and
Kriz described a direct Hodge realization functor for these ``cyclic
motives''. For any integer $n$ greater than or equal to $2$ and any point $z$ in
$\K$, they produced an algebraic cycle $\on{Li}_n^{cy}(z)$. This cycle
$\on{Li}_n^{cy}(z)$ induces a motive. They showed in \cite[Theorem 9.1]{BKMTM} that the 
``bottom-left'' coefficient of its period matrix in the Hodge realization is
exactly $-Li_n(z)/(2i\pi)^n$.

More recently in \cite{GanGonLev05}, Gangl, Goncharov and Levin, using a
combinatorial approach,  
constructed algebraic cycles corresponding to the multiple polylogarithm values
$Li_{k_1,\ldots, k_m}(z_1, \ldots, z_m)$ with parameters $z_i$ in $\K^*$, under
the condition that the corresponding $x_i$ (as defined above) are all
distinct. In particular, all the $z_i$ except for $z_1$ must be different from $1$.
This implies that their method does not give algebraic cycles corresponding to
multiple zeta values.
   
\subsection{Algebraic cycles over $\ps$}
The goal of this paper is to develop a geometric construction for algebraic
cycles which removes the previous obstacle ($z_i\neq 1$); thus opening the
possibility to obtain algebraic cycle attached to multiple zeta values (as
explained at the very beginning of this introduction).

The general approach of  this project views cycles as fibered over a larger
base, and not just point-wise cycles for some fixed parameter $(z_1,
...,z_m)$.  Levine, in \cite{LEVTMFG}, shows that there exists a short
exact sequence relating the Bloch-Kriz Hopf algebra over $\Sp(\K)$, its
relative version over $\ps$ and the Hopf algebra associated to Goncharov and
Deligne's motivic fundamental group over $\ps$ which contains the motivic
iterated integrals associated to multiple polylogarithms in one variable. 

This one-variable version of multiple polylogarithms gives multiple zeta
values for $z=1$. Thus, in order to obtain motives  corresponding to multiple
zeta values in this framework of algebraic cycles, it is natural to 
first investigate the Bloch-Kriz construction over $\ps$. This should lead to
algebraic cycles and motives  corresponding to multiple polylogarithms in one
variable. The multiple zeta value objects then arise as limit motives, or
as limits of variations of mixed Hodge structure as $z$ tends to $1$.

 However, before computing any motives or any matrix periods, we first need to
 obtain explicit algebraic cycles 
over $\ps$ such that: \begin{itemize}
\item their boundary (in the Bloch-Kriz complex) is related to the differential (or derivative) of
  multiple polylogarithm functions;
\item their Zariski closures over $\A^1$ have a well defined fiber at $z=1$
as element of the Bloch-Kriz complex. 
\end{itemize}
This last condition is a priori guided by the fact that, even if the period
matrix corresponding to $Li_n(z)$ is not well defined at $z=1$ (because $\log(1-z)$
appears in some coefficients), its ``bottom-left coefficient''
$-Li_n(z)/(2i\pi)^n$ is well-defined at $z=1$.  Moreover it is also naturally 
imposed by our geometric construction (cf. Section \ref{treetocycle}).

This paper constructs a family of cycles satisfying these conditions. In the 
final remarks, we provide some extra evidence that it is a good family by computing
an integral in low weight.

\subsection{Strategy and Main results}
The Bloch-Kriz Hopf algebra and its relative version over $\ps$ is the $\HH^0$
of the bar construction over a commutative differential graded algebra (cdga)
$\cNg{X}{\bullet}$  constructed from 
algebraic cycles. We will use this algebra in the case $\K=\Q$ and $X=\Sp(\Q)$
or $X=\ps$ or  $X=\A^1$. The algebra $\cNg{X}{\bullet}$  comes from the cubical
construction of the 
higher Chow groups, and setting $\square^1=\p^1 \sm\{1\} \simeq \A^1$, we have: 
\[
\cNg{X}{\bullet}=\Q \oplus \left(\oplus_{p \geqs 1}\mc N_X^{\bullet}(p)\right),
\]
where the $ \cNg{X}{n}(p)$ are generated by codimension $p$ cycles in
$X\times \square^{2p-n}$ which are admissible (cf. Definition \ref{defZc} and
Remark \ref{remZc}). The cohomology of the complex
$\cNg{X}{\bullet}(p)$ recovers the higher Chow groups
$\on{CH}^p(X,2p-\bullet)$.

As the $\HH^0$ of the bar construction over $\cNg{\ps}{\bullet}$ is isomorphic
to one of its 
$1$-minimal models \cite{BKMTM}, our strategy  is 
to follow the inductive construction of this $1$-minimal model as presented in
\cite{DGMS}. We recall below how the $1$-minimal model is built in \cite{DGMS}
because it guided and motivated the present work.

Let $S^{gr}(V)=\oplus_n S^{gr,n}(V)$ denote the graded symmetric algebra over a
graded vector space 
$V$. Roughly speaking, the inductive construction of the $1$-minimal model of
$\cNg{\ps}{\bullet}$  proceeds as follows. We start with 
 $V_1=\HH^1(\cNg{\ps}{\bullet})$ (in degree $1$) and a map $\vp_1$ given by a choice of
 representative in $\cNg{\ps}{\bullet}$ of a basis of $V_1$. This map induces a
 map $S^{gr}(V_1) \ra \cNg{\ps}{\bullet}$ and a map 
\[
\HH^k(S^{gr}(V_1)) \st{\tilde{\vp_1}}{\lra} \HH^k(\cNg{\ps}{\bullet}),
\]
where the differential on $S^{gr}(V_1)$ is $0$. But the above map may not be
injective on $\HH^2$, which is one of the desired properties of a $1$-minimal
model. Hence the first inductive step consists in killing the kernel of
$\tilde{\vp_1}$ on $\HH^2$. In general, at the $i$-th step of the induction,  we define 
\[
V_{i+1}=V_i\oplus\ker(\tilde{\vp_i}|_{\HH^2}),
\]
where the kernel is added in degree $1$ and where $\tilde{\vp_i}|_{\HH^2}$
denotes the restriction of $\tilde{\vp_i}$ to $\HH^2(S^{gr}(V_1))$. Then
 $\vp_i$ is extended to a map 
\[
\vp_{i+1} : V_{i+1}\lra \cNg{\ps}{1}
\] by defining it on $\ker(\tilde{\vp_{i}}|_{\HH^2})$. We
 choose a family $(b_k^{(i)})$ of degree $2$ elements in $S^{gr}(V_i)$ inducing a
 basis $([b_k^{(i)}])$ of $\ker(\tilde{\vp_i}|_{\HH^2})$. The image of $[b_k^{(i)}]$ under
 $\vp_{i+1}$ is defined as follows: by definition $b_k^{(i)}$ is a $\Q$-linear combination
 of products of elements in $V_i$,
\[ 
b_k^{(i)}=\sum \alpha_{a,b}^k c_a^{(i)} \cdot c_b^{(i)}, 
\]
The element $b_k^{(i)}$ is mapped by $\vp_i$ to  $\sum \alpha_{a,b}^k \vp_i(c_a^{(i)}) \cdot
\vp_i(c_b^{(i)}) $ in $\cNg{\ps}{2}$. As $b_k^{(i)}$ gives a class in the $\HH^2(S^{gr}(V_1))$, the
differential in $\cNg{\ps}{\bullet}$ of the previous sum is $0$. Because $[b_k^{(i)}]$ lies in the kernel of
$\tilde{\vp_i}$, the sum 
\[
\vp_i(b_k^{(i)})=\sum \alpha_{a,b}^k \vp_i(c_a^{(i)}) \cdot \vp_i(c_b^{(i)})
\] 
is a boundary in $\cNg{\ps}{\bullet}$. That is, there exists $e_k^{(i)}$ in
$\cNg{\ps}{1}$ such that  
\[
\vp_i(b_k^{(i)})=\dN(e_k^{(i)})
\] 
where $\dN$ denotes the differential in $\cNg{\ps}{\bullet}$. The element
$\vp_{i+1}([b_k])$ is then defined by 
\[
\vp_{i+1}([b_k])=e_k^{(i)}.
\]
%The differential of $[b_k]$ in $S^{gr}(V_{i+1})$ is defined to be 
%\[
%\sum \alpha_{a,b}^k c_a \cdot c_b \quad \in V_i\cdot V_i.
%\]
The discussion above can be summarized in the following diagram
\[
\begin{tikzpicture}
\matrix (m) [matrix of math nodes,
 row sep=2em, column sep=4em, 
 text height=2.5ex,  text depth=0.25ex] 
{S^{gr,2}(V_i)  & \cNg{\ps}{2} & \\
b_k^{(i)}   & \sum \alpha_{a,b}^k\vp_i(c_a^{(i)}) \cdot \vp_i(c_b^{(i)})  & 0 \\
& \exists e_k^{(i)} \in \cNg{\ps}{1} & \\
};
 \path[->,font=\scriptsize]
(m-1-1) edge node[auto] {$\vp_i$} (m-1-2);
\path[|->,font=\scriptsize]
(m-2-1) edge node[auto] {$$} (m-2-2)%;
(m-2-2) edge node[auto] {$\dN$} (m-2-3);
\path[|->,dashed, font=\scriptsize]
(m-3-2) edge node[auto] {$\dN$} (m-2-2);
\end{tikzpicture}
\]

Although the construction developed in this paper does not exactly follow the
above description,  
it is largely inspired by its main aspects:
\begin{itemize}
\item Which linear combinations of products of degree $1$ elements are boundary?
 Note that these linear combinations are degree $2$ elements. 
\item Which degree $1$ elements $c$  are mapped by
  the differential onto these linear combinations of products?
\end{itemize}

The strategy of this paper finds linear combinations $\sum
\alpha_{i,j}c_i \cdot c_j$ that have a zero differential, and considers under what
conditions they can be written as an explicit boundary, i.e. as $\dN(e)$ for
some explicit cycle $e$ in $\cNg{X}{1}$. 

In weight $p$, we will consider linear combinations constructed from
elements obtained in lower weight; under some geometric conditions  the cycle $c$
can be constructed  easily. It is the pull-back of $\sum \alpha_{i,j}c_i \cdot
c_j$ induced by the multiplication map ($\square^1 \simeq \A ^1$, $X=\A^1$): 
\[
\begin{tikzpicture}
\matrix (m) [matrix of math nodes,
 row sep=2em, column sep=4em, 
 text height=2.5ex,  text depth=0.25ex] 
{X \times \A^1 \times \A^{2p-2}
%=\A^1 \sm \{0,1\} \times \A^1 \times \A^{2p-2} 
&
% \A^1 \sm \{0,1\} \times \A^{2p-2}=
 X \times \A^{2p-2}
 \\
~ \quad (t, s, x_1,\ldots,x_{2p-2})\qquad ~ & 
~\qquad ~(ts, x_1,\ldots,x_{2p-2}) .\qquad ~\\};
\path[->, font=\scriptsize]
(m-1-1) edge node[auto] {} (m-1-2);
\path[|->, font=\scriptsize]
(m-2-1) edge node[auto] {} (m-2-2);
\end{tikzpicture}
\]

Even though it is not stated formally in their paper, it is reasonable to believe
that  Bloch and Kriz used this idea to build their cycles
$\on{Li}_n^{cy}(z)$. Thus, it is natural to recover
these cycles using the method described above. However,
the cycles corresponding to multiple polylogarithms constructed using this method are
different from the ones proposed by Gangl, Goncharov and Levin in \cite{GanGonLev05}.

In particular, using the above pull-back by the multiplication insures that the
constructed cycles on $\ps$ admit an extension to $\cNg{\A^1}{\bullet}$
which is  dominant over $\A^1$ with pure relative dimension (cf. Definition
\ref{def:equi}), and have an empty fiber at $0$ and a well defined fiber at
$1$. Such cycles are called 
\emph{equidimensional} (over $\A^1$).

This geometric construction proceeds
within a combinatorial setting. More precisely, for any Lyndon
word $W$ in the letters $\{0, 1\}$, we consider
linear combinations of decorated rooted trivalent trees $\T W(x)$. These $\T
W(x)$ are dual to the basis of Lyndon brackets of the free Lie algebra; hence the
subscript $W^*$ in $\T W(x)$. In this notation,  $x$ 
denotes a parameter in $\A^1$ labeling the root. By dualizing the action of the
Lie algebra on itself by Ihara's special derivations, we obtain a twisted
cobracket $\dc$ which can applied to $\T W(x)$. 
 The main point of the
combinatorial setting is the following result (Proposition \ref{dcyTw}).

\begin{thma} For any Lyndon word $W$ in the
  alphabet $\{0,1\}$, the 
  cobracket of $\T W(x)$ can be expressed as:
\begin{equation} \label{equationintroT}
\dc( \T W(x)) = \sum \alpha_{i,j}\T{W_i}(x)\w \T{W_j}(x)+ \sum
\beta_{k,l}\T{W_k}(x)\w \T{W_l}(1)   
\end{equation}
where $\w$ is the exterior product and $W_i$, $W_j$, $W_k$
and $W_l$ are Lyndon words of length strictly smaller than $W$. The coefficients
$\alpha_{i,j}$'s and $\beta_{k,l}$'s are integers.
\end{thma}
Note that Gangl, Goncharov and Levin in
\cite{GanGonLev05} 
defined a differential $\dggl$ on a  commutative differential graded algebra
built on a closely related type of tree. These authors' approach  
is mainly based on the universal enveloping algebra of 
the Lie algebra used in this paper (see Remark \ref{relationGGL}). The
differential $\dggl$  was originally constructed to mimic the differential in
$\cNg{X}{\bullet}$ while the cobracket $\dc$ reveals the underlying structure of 
Ihara's (co)action by special derivations. 

The above Theorem controls the combinatorial structure of the elements we want to 
build. %
It plays a central role in  constructing our explicit algebraic cycles in a general
framework. Modifying the above ``differential system'' (a cobracket inducing a
differential),  we inductively
construct cycles $\Lc_W$  
corresponding to $\T W(x)$ (which we can think of as $\T W(x)-\T W(0)$) and cycles
$\Lcu_W$ corresponding to the difference 
$\T W(x)-\T W(1)$. In this way, we obtain (cf. Theorem \ref{thm:cycleLcLcu})
algebraic cycles that 
are expected, when specialized at $1$, to correspond to multiple zeta values under
Bloch-Kriz Hodge realization functor (which, up to the author understanding, is not completely computable in
any explicit and coherent way). 
\begin{thma} Let $X=\p^1\sm\{0,1, \infty\}$. 
 For any Lyndon word $W$ in the alphabet  $\{0,1\}$ of length $p\geqs 2$, there exists
  a non zero cycle $\Lc_W$  in  $\cNg{X}{1}(p)$, i.e. a cycle of codimension
  $1$ in $X \times \square^{2p-1}$, such that:%
\begin{itemize}
\item $\Lc_W$ has a decomposable boundary whose explicit expression is derived
  from Equation \eqref{equationintroT}, 
\item  $\Lc_W$ admits an equidimensional extension to  $\A^1$ with
  empty fiber at $0$.
\end{itemize}
A similar statement holds for $1$ in place of $0$.
\end{thma}
\begin{rem}
The main arguments in support of the importance of cycles $\Lc_W$ are :
\begin{itemize}
\item When $W=0 \cdots 0 1$ (only one ``$1$''), we recover the class of classical
  polylogarithms cycles introduced by Bloch and Kriz \cite{BKMTM}.
\item More generally, the differential equation satisfied by cycles $\Lc_W$ is
  controlled by  Ihara's special derivation and the induced cobracket.% At the
%  motivic level, ``controlled by'' can be replaced by ``exactly given by'' (see
%  \cite{SouB arbase})

On motivic iterated integrals, Ihara's cobracket is known to agree
  with Goncharov motivic coproduct which  corresponds to the differential
  equation satisfied by multiple 
  polylogarithm (see also the discussion on page  \pageref{labelpage:lastcomments}).

%The last step of our general program shows that the coproduct of the  motive
%induced by the cycle $\Lc_W$ is exactly  given by  Ihara's cobracket or
%equivalently by Goncharov motivic coproduct (see \cite{SouBarbase}).

 Hence the differential equations satisfied by cycles $\Lc_W$ is
  very closely related to the differential equations of multiple polylogarithms.  
\item Theorem \ref{thm:cycleLcLcu} insures that cycles $\Lc_W$ can be
  specialized at the point $1$ which is desirable because multiple zeta value
  are specialization of multiple polylogarithm at the point $1$. 
\item The computation
  of the actual integral for $W=011$ is done in Section \ref{subsec:int} and
  give $-\zeta(2,1)$ after specialization at $1$. Such computations can be done by
hand in small weight but they  do not increase the global understanding of the
Hodge realization for cycles above.

%\item $\Lc_W|_{\{x=z\}}$ gives an element of $\cNg{\Q}{\bullet}$. The next steps
%  of our general program show that the cycle $\Lc_W|_{\{x=z\}}$ corresponds (as a
%  motive) to a linear combination of motivic multiple
%  polylogarithms (cf.  \cite{SouBarbase}).

% (the last
%  steps of this program, proved in \cite{SouBarbase} show that they actually
%  agree at the motivic level).
\end{itemize}
All these reasons leads us to think of $\Lc_W|_{\{x=1\}}$, specialization of the
cycle $\Lc_W$ at $1$, as corresponding to a (linear combination of) multiple
zeta  value(s). 
%\begin{rem}

This intuition is confirmed in \cite{SouBarbase} which shows
that motives associated to cycles  $\Lc_W|_{\{x=z\}}$ generate the Deligne-Goncharov
motivic fundamental group. 

This is a consequence of this paper because their cobracket (corresponding to the
differential of cycles) is exactly given by Equation \eqref{equationintroT} and
hence by Ihara's cobracket.
%; that is it agrees with Goncharov motivic coproduct
%for iterated integrals. 
\end{rem}

The paper is organized as follows:
\begin{itemize}
\item The next section (Section \ref{mainsec:complex}) is devoted to a general
  review of the Bloch-Kriz cycle 
  complex. 
In particular, we detail the construction
  of the cycle complex and recall some of its main properties (relation to
  higher Chow groups, localization long exact sequence, etc.).
\item Then, we  apply in Section \ref{subs:poly}  our strategy to the nice examples of
  polylogarithms as described in \cite{BKMTM}. Then 
  we present the main difficulties via an example in weight $3$,
  and explain how to overcome them in general.
\item Next, in Section \ref{sec:Combi}, we deal with the combinatorial
  situation, beginning by presenting the 
  trivalent trees attached to Lyndon words and their relations with the free Lie
  algebra $\Lie(X_0,X_1)$ on two generators. We then review Ihara's action by
  stable derivations. Next we introduce linear combinations of
  trees $\T W$ corresponding to the 
  dual situation, i.e. in the Lie coalgebra graded dual to $\Lie(X_0,X_1)$,
  and study the cobracket corresponding to Ihara's action. Note that we are not
  simply looking at Ihara's Poisson bracket; we are also keeping track of the
  structure coefficients of the action.

 This leads to Proposition \ref{dcyTw}, which proves that the
  image of $\T W$ under $\dc$ is decomposable in terms of $\T U$ (with $U$ of
  smaller length).
\item In Section \ref{treetocycle} we prove our main Theorem. 
It begins by presenting  some properties of equidimensional
  algebraic cycles over $\ps$ and $\A^1$. Then, we study the relation between the two
  situations and explain how the pull-back by the multiplication (resp. by 
a twisted
  multiplication) gives a homotopy between the identity and the fiber at $0$
  (resp. at $1$) pulled back to a cycle over $\A^1$ by $p:\A^1 \ra \{pt\}$. Finally, the
  above work allows us to inductively construct the desired families of cycles
  $\Lc_W$ and $\Lcu_W$ in Theorem \ref{thm:cycleLcLcu}. 
 We conclude this section by computing the integral attached to the cycle
 $\Lc_{011}$. Its 
  specialization at the point $1$ is $-\zeta(2,1)$.
\item The last section is devoted to some concluding remarks. In particular, we
  show how our construction passes to the setting of 
quasi-finite cycles used in the motivic context provided  by
\cite{LEVTMFG}.
 \end{itemize}
\section{The cycle complex over $\ps$}\label{mainsec:complex}
\subsection{Commutative differential graded algebras}
\label{subsec:1mini}

Let us recall some definitions and properties of commutative differential graded
algebras (cdga's) over $\Q$. 
\begin{defn}[cdga] A commutative differential graded algebra $A$ is a
  commutative graded algebra (with unit) $A= \oplus_n A^n$ over $\Q$ together with a graded
  homomorphism $d=\oplus d^n$, $d^n : A^n \lra A^{n+1}$ such that
\begin{itemize}
\item $d^{n+1}\circ d^n=0$
\item $d$ satisfies the Leibniz rule
\[
d(a\cdot b)= d(a)\cdot b +(-1)^n a \cdot d(b) \qquad \mbox{for }a \in A^n, \, b
\in A^m.
\]
\end{itemize}    
\end{defn}
We recall that a graded algebra is \emph{commutative} if and only if for any homogeneous
elements $a$ and $b$, we have
\[
ab=(-1)^{\deg(a) \deg(b)} ba.
\]
\begin{defn} A cdga $A$ is 
\begin{itemize}
\item \emph{connected} if $A^n=0$ for all $n<0$ and $A^0=\Q \cdot 1$.
\item \emph{cohomologically connected} if $\HH^n(A)=0$ for all $n<0$ and
  $\HH^0(A)=\Q\cdot 1$.
\end{itemize}
\end{defn} 
In our context, the cdga involved are not necessarily connected, but come with
an Adams grading.  
\begin{defn}[Adams grading] An \emph{Adams graded} cdga is a cdga $A$ together
  with a decomposition into subcomplexes $A= \oplus_{p \geqs 0} A(p)$ such that
\begin{itemize}
\item $A(0)=\Q$ is the image of the algebra morphism $\Q \lra A$;
\item The Adams grading is compatible with the product of $A$, i.e.
\[
A^k(p)\cdot A^l(q) \subset A^{k+l}(p+q).
\]
However, no sign is introduced as a consequence of the Adams grading.
\end{itemize}
\end{defn}
For an element $a \in A^k$, we call $k$ the cohomological degree and denote it by
$|a|:=k$. In the case of an Adams graded cdga, for $a\in A^k(p)$, we call $p$
its weight or Adams degree and denote it by $\we(a):=p$.

We assume that all the commutative differential graded algebras are equipped with an augmentation $\ve : A \lra \Q$. Note that an Adams graded cdga $A$ has a canonical
augmentation $A \lra \Q$ with augmentation ideal $A^+=\oplus_ {p \geqs 1} A(p)$. 

\subsection{General construction of the Bloch-Kriz cycle complex}
This subsection is devoted to the construction of the cycle complex as presented
in \cite{BlochACHKT,BlochLMM,BKMTM, LevBHCG}. 

For simplicity we work over $\Spec(\Q)$. For $n \geqs 1$, let $\square^n$ be the
algebraic $n$-cube 
\[
\square^n=(\p^1\sm \{1\})^n
\] 
with the convention that $\square^0=\Spec(\Q)$.
Insertion morphisms $s_{i}^{\ve} :\square^{n-1} \lra \square^{n} $ are given by 
the identification
\[
\square^{n-1}\simeq \square^{i-1}\times \{\ve\}\times \square^{n-i}
\]
for $\ve=0,\infty$. Similarly, for $I \subset \{1, \ldots , n\}$ and $\ve : I
\ra \{0,\infty\}$, we define $s_I^{\ve} : \square^{n-|I|} \lra \square^n$.
\begin{defn} A \emph{face} $F$ of codimension $p$ of $\square^n$ is the image
  $s_I^{\ve}(\square^{n-p})$ for some $I$ and $\ve$  as above such that
  $|I|=p$. 

In other word, a codimension $p$ face of $\square^n$ is given by the equation
$u_{i_k}=\ve_k$ 
for $k$ in $\{1, \ldots, p\}$ and  $\ve_k$ in $\{0,\infty\}$ where $u_1, \ldots,
u_n$ are the usual affine coordinates on $\p^1$. 
\end{defn} 
The permutation group $\Sn$ acts on $\square^n$ by permutation of the factors.
\begin{rem}~
 \begin{itemize}
\item
In some references, \cite{LevBHCG, LEVTMFG} for example, $\square^n$ is
defined to be the usual affine space $\A^n$, and the faces are obtained
by setting various
coordinates equal to $0$ or $1$. This makes the correspondence with the
``usual'' cube more natural. However, the above presentation, which agrees with
\cite{BKMTM} or \cite{GanGonLev05}, makes some comparisons and some formulas
``nicer''. In particular,  the relation between the construction in the setting
$\square^1=\p^1\sm \{1\}$ and the Chow group
$\CH^1(X)_{\Q}$ is simpler.
\item Let $\textbf{Cube}$ be the subcategory of the category of finite  sets whose
  objects are $\ul{n}=\{0,1\}^n$ and whose morphisms are generated by forgetting a
  factor, inserting $0$ or $1$, and permutation of the factors, these morphisms
  being subject to natural relations. Similarly
  to the usual description of a simplicial object,
  $\square^{\bullet}$ is a functor from $\textbf{Cube}$ into the category of
  smooth ${\Q}$-varieties, and the various $\square^n$ are geometric equivalents of
  $\ul n$.
\end{itemize}
\end{rem}

Let $X$ be a smooth quasi-projective variety over ${\Q}$.
\begin{defn}\label{defZc} Let $p$ and $n$ be non negative integers. Let $\Zc^p(X,n)$ be the
  free group generated closed irreducible
  subvarieties of $X \times \square^n$ of codimension $p$ which intersect all
  faces $X \times F$ properly (where $F$ is a face of $\square^n$). That is:
\[
\Z\left< W \subset X \times \square^n \text{ such that} \left\{
\begin{array}{l}
W \text{ is closed and irreducible;} \\
\on{codim}_{X\times F}(W\cap X \times F)=p \\
 \text{or } W \cap (X\times F)= \emptyset
\end{array}
\right.
\right>
\]
\end{defn}
\begin{rem}~ \label{remZc}\begin{itemize}
\item A subvariety $W$ of $X \times \square^n$ as above is called \emph{admissible}.
\item As the projection $p_i : \square^n \ra \square^{n-1}$ forgetting the
  $i$-th factor is smooth,  we have the
  corresponding induced pull-back: $p_i^* : \Zc^p(X,n-1) \ra \Zc^p(X,n)$.
\item  $s_i^{\ve}$ induces a regular closed embedding $X \times \square^{n-1}
  \ra X \times \square^{n}$ which is of local complete intersection. As
  we are considering only admissible cycles, i.e. cycles in ``good position'' with
  respect to the faces, $s_i^{\ve}$ induces  $s_i^{\ve \, *} : \Zc^p(X,n) \ra \Zc^p(X,n-1)$.
\item The morphism $\dN = \sum_{i=1}^{n} (-1)^{i-1}(s_i^{0,*}-s_i^{\infty,*})$ induces a
  differential 
\[
\Zc^p(X,n) \lra \Zc^p(X,n-1).
\]
\end{itemize}
\end{rem}
The action of $\Sn$ on $\square^n$ can be extended to an action of the semi-direct
product $G_n=(\Z / 2\Z)^n \rtimes \Sn$ where each $\Z/2\Z$ acts on $\square^1$ by
sending the usual affine coordinate $u$ to $1/u$.  The sign representation of
$\Sn$ extends to a sign representation $G_n \longmapsto
\{\pm 1\}$. Let $\Alt_n \in \Q[G_n]$ be the corresponding projector. 
\begin{defn} Let $p$ and $k$ be integers with $p>0$. Set
\[
\mc N_X^k(p)=\Alt_{2p-k}(\Zc(X,2p-k)\otimes \Q).
\]
We will refer to $k$ as the \emph{cohomological degree}, and to $p$ as the \emph{weight}.
\end{defn}
\begin{rem} In this presentation, we have not dealt with degeneracies (images in
  $\Zc(X,n)$ of $p_i^*$) because we use an alternative version with
  rational coefficients. For more details, see the first section of
  \cite{LevSM} which presents the general setting of cubical objects. A similar
  remark was made in \cite{BKMTM}[after equation  (4.1.3)]. 
\end{rem}
\begin{defn}[Cycle complex] For $p$ and $k$ as above, the pull-back
\[
s_i^{\ve\, *}: \Zc^p(X,2p-k) \lra \Zc^p(X,2p-k-1)
\]
induces a morphism $\dN_i^{\ve}:\mc N_X^{k}(p) \lra \mc N_X^{k+1}(p) $. Thus,
the differential $\dN$ on $\Zc^p(X,2p-k)$ extends to a  differential
\[
\dN=\sum_{i=1}^{2p-k} (-1)^{i-1} (\dN_{i}^0 - \dN_{i}^{\infty}) : 
 \mc N_X^{k}(p) \st{\dN}{\lra} \mc
N_X^{k+1}(p).
\]
 Let $\mc N_X^{\bullet}(p)$ be the complex
\[
\mc N_X^{\bullet}(p) : \qquad \cdots \lra \mc N_X^{k}(p) \st{\dN}{\lra} \mc
N_X^{k+1}(p) \lra \cdots 
\]

Define the cycle complex as 
\[
\mc N_X^{\bullet}=\Q \oplus \bigoplus_{p \geqs 1}\mc N_X^{\bullet}(p).
\]
\end{defn}
In \cite[\S 5]{LevBHCG} and in \cite{LEVTMFG}[Example 4.3.2], Levine proved
 the following proposition.
\begin{prop} Concatenation of the cube factors and pull-back by the diagonal
\[
X\times \square^n \times X \times \square^m \st{\sim}{\ra} 
X\times X \times \square^n \times  \square^m \st{\sim}{\ra} 
X\times X \times \square^{n+m}\st{\Delta_X}{\longleftarrow}
 X \times \square^{n+m}
\]
induces, after applying the $\Alt$ projector, a well-defined product:
\[
\mc N_X^k(p)\otimes \mc N_X^l(q) \lra \mc N_X^{k+l}(p+q)  
\]
 denoted by $\cdot$
\end{prop}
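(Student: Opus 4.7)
The plan is to verify the claim in three steps: first check the codimension and degree arithmetic so the target group is correct, then invoke a moving lemma to make the diagonal pullback meaningful, and finally confirm independence of the choices involved and compatibility with the alternating projectors.

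The codimension count is routine. Given admissible cycles $Z \in \Zc^p(X, 2p-k)$ and $W \in \Zc^q(X, 2q-l)$, the external product $Z \times W$ has codimension $p+q$ in $X \times \square_{\K}^{2p-k} \times X \times \square_{\K}^{2q-l}$; after the isomorphism swapping the middle factors it has codimension $p+q$ in $X \times X \times \square_{\K}^{2(p+q)-(k+l)}$, and pulling back along $\Delta_X$ (when defined) produces a codimension $p+q$ cycle in $X \times \square_{\K}^{2(p+q)-(k+l)}$. Applying $\Alt_{2(p+q)-(k+l)}$ then lands in $\mc N_X^{k+l}(p+q)$, as required.

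The technical core is making sense of the pullback by $\Delta_X$. A priori $Z \times W$ need not meet the diagonal $\Delta_X \times \square_{\K}^{2(p+q)-(k+l)}$ properly, nor need the resulting restriction be admissible with respect to all cube faces. I would invoke the moving lemma in the form proved by Levine in \cite{LevBHCG}: for a smooth quasi-projective $X$, the subcomplex of $\Zc^p(X, \bullet) \otimes \Q$ consisting of cycles in good position with respect to any fixed finite collection of closed subschemes is quasi-isomorphic to the full complex. Applied to the diagonal together with all faces of $\square_{\K}^{2(p+q)-(k+l)}$, this produces, for each class in $\mc N_X^k(p)$, an admissible representative whose external product with $W$ pulls back along $\Delta_X$ to an admissible cycle. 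Independence of the chosen moved representative is then formal: two such choices differ by a boundary in the moving subcomplex, and the pullback $\Delta_X^*$ composed with $\Alt$ sends boundaries to boundaries.

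Compatibility with the two input $\Alt$-projectors follows from the natural embedding $\Sn[2p-k] \times \Sn[2q-l] \hookrightarrow \Sn[2(p+q)-(k+l)]$ together with the analogous embedding of the $(\Z/2\Z)^n$ sign-twisting subgroups, since these embeddings match the sign representations used to define the projectors. The main obstacle is the moving lemma step; without it the pullback $\Delta_X^*$ is simply undefined on many admissible cycles, and this is precisely why the product is formulated on $\mc N_X^{\bullet}$ rather than on the pre-alternating complex $\Zc^{\bullet}(X, \bullet)$ itself.
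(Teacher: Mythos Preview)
The paper does not give its own proof here; it cites Levine \cite{LevBHCG}[\S 5] (and \cite{LEVTMFG}) for the result. So the comparison is really with Levine's argument.

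Your outline has the right ingredients, but there is a genuine gap in what it establishes. You argue that after moving $Z$ into good position the pullback $\Delta_X^*(Z\times W)$ is admissible, and that a different choice of moved representative changes the answer by a boundary. That shows the product is well-defined on \emph{cohomology}, i.e.\ on the higher Chow groups. It does not give a chain-level map
\[
\mc N_X^k(p)\otimes \mc N_X^l(q)\lra \mc N_X^{k+l}(p+q),
\]
because the output depends on the choice of moving. The paper, however, needs an honest cdga structure on $\mc N_X^{\bullet}$: the bar construction $B(\mc N_X)$, the $1$-minimal model, and the comparison with $\pi_1^{mot}$ all require a strict product satisfying associativity and the Leibniz rule at the chain level, not merely a product defined up to boundaries. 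Your sentence ``Independence of the chosen moved representative is then formal'' is exactly where the argument falls short of what is claimed.

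Levine's construction in \cite{LevBHCG} is more delicate: he produces, via a functorial moving procedure, a specific subcomplex (or a diagram of complexes) on which the diagonal pullback is honestly defined and which is quasi-isomorphic to $\mc N_X^{\bullet}$; the cdga structure lives there and is transported along the quasi-isomorphism. Alternatively one lands in a homotopy-coherent setting and strictifies. Either way, the passage from ``well-defined up to boundaries'' to ``strict chain-level product'' is real work that your sketch skips.
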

\begin{rem} The smoothness hypothesis on $X$  allows us to consider the
  pull-back by the diagonal $\Delta_X : X \lra X \times X$ which is, in this case,
  a local complete intersection. 
\end{rem}
We have the following theorem (also stated in \cite{BKMTM, BlochLMM} for
$X=\Sp({\Q})$).
\begin{thm}[{\cite[Theorem 4.7 and \S 5]{LevBHCG}}]
The cycle complex $\mc N_X^{\bullet}$ is a differential graded commutative
algebra. In weight $p$, its cohomology groups are the $p$-th higher Chow group of $X$:
\[
\HH^k(\cNg{X}{\bullet}(p))=\CH^p(X,2p-k)_{\Q},
\]  
where $\CH^p(X,2p-k)_{\Q}$ stands for $\CH^p(X,2p-k)\otimes {\Q}$.
\end{thm}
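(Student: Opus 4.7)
The plan is to split the statement into its two independent pieces: (i) verify that $\mc N_X^{\bullet}$ with $\dN$, the product $\cdot$, and the Adams grading is a commutative differential graded algebra, and (ii) identify the cohomology with Bloch's higher Chow groups (after tensoring with $\Q$). Both parts are known from \cite{BlochACHKT, BKMTM, LevBHCG}, but the argument organises around the cubical-versus-simplicial comparison and a moving lemma for cycles in good position with respect to the faces of $\square_{\K}^{n}$.

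For part (i), I would first check that $\dN^{2}=0$ on $\Zc^{p}(X,n)$; this is the standard cubical identity obtained from the commutation relations $s_{i}^{\varepsilon}\circ s_{j}^{\delta}=s_{j-1}^{\delta}\circ s_{i}^{\varepsilon}$ for $i<j$, combined with the sign $(-1)^{i-1}$ and the alternating sum over $\varepsilon\in\{0,\infty\}$. Passing to $\mc N_{X}^{\bullet}(p)=\Alt_{2p-k}(\Zc^{p}(X,2p-k)\otimes\Q)$ preserves this, since $\dN$ is $\Sn$-equivariant up to the sign representation. For the product, the main issue is well-definedness: given admissible cycles $Z\subset X\times\square_{\K}^{n}$ and $W\subset X\times\square_{\K}^{m}$, one must check that $\Delta_{X}^{*}(Z\times W)\subset X\times\square_{\K}^{n+m}$ meets every face properly. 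This is not automatic and is the step requiring genuine geometric input — a moving lemma on the cubical cycle complex, proved in \cite{LevBHCG} using a standard $\Sn$- and $G_{n}$-equivariant moving argument over a smooth $X$. Once the product is defined, graded commutativity follows from the fact that interchanging the two blocks of cube coordinates differs from the identity by a shuffle permutation in $G_{n+m}$, which is absorbed into the sign by the $\Alt$ projector. The Leibniz rule is a direct (sign-tracking) computation using the compatibility of the insertion maps $s_{I}^{\varepsilon}$ with the concatenation of cubes.

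For part (ii), I would reduce to the equivalence between the cubical and simplicial formulations of higher Chow groups. Write $z^{p}_{\mathrm{simp}}(X,\bullet)$ for Bloch's original simplicial complex, and $z^{p}_{\mathrm{cube}}(X,\bullet)$ for the cubical complex of admissible cycles modulo degenerate ones. The classical comparison (see \cite{LevBHCG, LevSM} and the remark after (4.1.3) in \cite{BKMTM}) shows that these two chain complexes compute canonically isomorphic cohomology after $\otimes\Q$, and that working with the alternating sub-complex $\Alt\bigl(\Zc^{p}(X,\bullet)\otimes\Q\bigr)$ is a quasi-isomorphic replacement of $z^{p}_{\mathrm{cube}}(X,\bullet)\otimes\Q$: the projector $\Alt_{n}$ automatically kills cycles in the image of the $p_{i}^{*}$, so degeneracies disappear for free in the rational cubical complex. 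Reindexing $n=2p-k$ then turns Bloch's identification $\HH^{n}\bigl(z^{p}_{\mathrm{simp}}(X,\bullet)\bigr)=\on{CH}^{p}(X,n)$ into the stated formula $\HH^{k}(\mc N_{X}(p))=\on{CH}^{p}(X,2p-k)_{\Q}$.

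The part I expect to be the real obstacle is the moving lemma needed to define the product $\cdot$ at the level of complexes: one must ensure not only that $\Delta_{X}^{*}(Z\times W)$ is of the correct codimension, but that it remains admissible with respect to every face of $X\times\square_{\K}^{n+m}$, and this requires $X$ to be smooth so that $\Delta_{X}$ is a regular embedding and the refined intersection-theoretic pull-back is available. The remaining verifications (Leibniz, commutativity, the identification of cohomology) are essentially formal once this geometric input is in place, and I would simply cite \cite{LevBHCG} for the detailed construction and \cite{BKMTM, BlochACHKT, BlochLMM} for the original definitions and the base case $X=\on{Spec}(\K)$.
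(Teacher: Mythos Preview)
Your outline is a reasonable sketch of how one would establish this result, and it correctly identifies the two independent pieces and the key geometric input (the moving lemma for the cubical complex over a smooth base, needed to make the diagonal pull-back well-defined on admissible cycles). However, there is nothing to compare against: the paper does not give its own proof of this theorem. It is stated as a citation to \cite{LevBHCG} (with the case $X=\Sp(\K)$ already in \cite{BKMTM, BlochLMM}) and is used as a black box throughout. Your proposal is consistent with the arguments in those references, so there is no discrepancy to point out.
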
 
Moreover, this construction is functorial in $X$ with respect to flat pull-back  and
proper push-forward (up to the usual shifts in degree and weight). Using Levine's
work \cite{LevBHCG}, we have a more general pull-back functoriality on the level
of  cohomology groups; we could also use Bloch's moving Lemma \cite{BlochMLHCG}.
\subsection{Some properties of Higher Chow groups}
In this section, we present some well-known properties of the higher Chow groups and some
applications that will be used later. Proof of the
different statements can be found in \cite{BlochACHKT} or \cite{LevBHCG}.
\subsubsection{Relation with higher $K$-theory}
Higher Chow groups, in a simplicial version, were first introduced in
\cite{BlochACHKT} in order to achieve a better understanding of
the $K$-groups of higher $K$-theory.  In \cite{LevBHCG}[Theorem 3.1], Levine 
gives a cubical version of the desired isomorphisms.
\begin{thm}[\cite{LevBHCG}]Let $X$ be a smooth quasi-projective ${\Q}$-variety,
 and let  $p$, $k$ be two positive integers.  Then
\[
\CH^p(X, 2p-k)_{\Q}\simeq\Gr^p K_{2p-k}(X)\otimes \Q
\]
\end{thm}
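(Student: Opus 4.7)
The plan is to reduce this to the corresponding statement for the \emph{simplicial} version of the higher Chow groups, which is Bloch's original theorem, and then to establish a quasi-isomorphism between the cubical complex $\mc N_X^{\bullet}(p)$ used here and Bloch's simplicial complex $z^p(X,\bullet)$ computing $\CH^p(X,2p-\bullet)_{\Q}$ in the simplicial sense.

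First I would recall Bloch's theorem (from \cite{BlochACHKT}): for a smooth quasi-projective $\K$-variety $X$, the Adams/$\lambda$-operations on Quillen's $K$-theory give a splitting (after $\otimes\Q$), and the Chern character/higher cycle class map produces an isomorphism
\[
\CH^{p}(X,n)^{\mathrm{simp}}_{\Q}\simeq \Gr^{p}_{\lambda}K_{n}(X)\otimes\Q.
\]
Granting this, the work consists in matching the cubical model used in the paper with the simplicial one.

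Next I would construct the comparison at the level of complexes. The classical procedure is to introduce the \emph{non-degenerate cubical} complex $\Zc^{p}(X,n)/\sum_{i}p_i^{*}\Zc^{p}(X,n-1)$, then show that after passing to the alternating projector one obtains a complex quasi-isomorphic to the full cubical complex (killing degeneracies is cost-free rationally), and finally build a natural zig-zag of quasi-isomorphisms with the simplicial complex $z^{p}(X,\bullet)$ by exploiting the standard inclusion of the algebraic simplex into the cube obtained by choosing affine coordinates on $\square^{n}\simeq\A^{n}$ and intersecting with the hyperplane $\sum x_i = $const (alternatively, one can go through the Suslin--Friedlander singular chain construction which identifies both). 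The alternating, rational, admissible framework is exactly designed so that this comparison is compatible with faces and hence with the differential $\dN$.

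The step that I expect to be the main obstacle is precisely this cubical-to-simplicial quasi-isomorphism, because it requires careful bookkeeping: one must check that moving cycles to put them in good position with respect to faces (Bloch's moving lemma, extended in \cite{BlochMLHCG} and \cite{LevBHCG}) works simultaneously for cubes and simplices, that the $\Alt_n$ projector does not destroy admissibility, and that the resulting map respects the $\lambda$-filtration on $K$-theory under the Chern character. Once this is done, the theorem follows: combining the quasi-isomorphism $\HH^{k}(\mc N_X^{\bullet}(p))\simeq\CH^{p}(X,2p-k)^{\mathrm{simp}}_{\Q}$ (essentially the statement that the two models compute the same higher Chow groups, already invoked after the cycle complex is defined) with Bloch's theorem yields the desired isomorphism $\CH^{p}(X,2p-k)_{\Q}\simeq\Gr^{p}_{\lambda}K_{2p-k}(X)\otimes\Q$.
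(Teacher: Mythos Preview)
The paper does not give a proof of this theorem: it is stated as a result of Levine, with the reference \cite{LevBHCG}[Theorem~3.1], and is used as a black box (for instance to deduce Equation~\eqref{eqCHQ} via Borel's computation). So there is no ``paper's own proof'' to compare against.

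Your outline is nonetheless a faithful summary of the standard route taken in the cited reference: one invokes Bloch's original simplicial theorem and then establishes a rational quasi-isomorphism between the cubical and simplicial cycle complexes. Your identification of the main technical point (the cubical--simplicial comparison, requiring moving lemmas and compatibility of the alternating projector with admissibility) is accurate. If you want this to stand as an actual proof rather than a sketch, the only missing ingredient is a precise reference or construction for that comparison map; the paper itself is content to defer entirely to \cite{LevBHCG}.
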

In particular, using the work of Borel \cite{BorCAG}, computing the rank of $K$-groups of
a number field in the case  $k=2$ and $p\geqs 2$, we find
\begin{equation}\label{eqCHQ}
\CH^p(\Q, 2p-2)_{\Q}\simeq\Gr^p K_{2p-2}(\Q)\otimes \Q=0.
\end{equation}
\subsubsection{$A^1$-homotopy invariance}
From Levine \cite{LevBHCG}[Theorem 4.5], we can deduce the following proposition.
\begin{prop}[\cite{LevBHCG}] Let $X$ be as above and let $p_X$ be the projection $p_X :
  X \times \A^1 
  \lra X$. Then the projection $p_X$ induces (by flat pull-back) a
  quasi-isomorphism for any positive   integer $p$
\[
p_X^*: \mc N_{X}^{\bullet}(p) \st{q.i.}{\lra} \mc N_{X\times \A^1}^{\bullet}(p)
\]
\end{prop}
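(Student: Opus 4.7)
The plan is to deduce this proposition as a formal consequence of the identification of the cohomology of $\mc N_X^\bullet$ with higher Chow groups (the theorem of Bloch/Levine recalled above) together with the $\A^1$-homotopy invariance for higher Chow groups, which is exactly the content of Levine's Theorem 4.5 in \cite{LevBHCG}. To avoid the clash between the projection name $p$ and the weight $p$, let me write the weight as $q$ in what follows.

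Recall that a morphism of complexes is a quasi-isomorphism if and only if it induces an isomorphism on every cohomology group. By the previous theorem one has
\begin{equation*}
\HH^k(\mc N_X^\bullet(q)) = \CH^q(X, 2q-k)_\Q, \qquad \HH^k(\mc N_{X \times \A^1}^\bullet(q)) = \CH^q(X \times \A^1, 2q-k)_\Q,
\end{equation*}
and the map induced by $p^*$ on these cohomology groups is precisely the flat pull-back $p^* : \CH^q(X, 2q-k)_\Q \to \CH^q(X \times \A^1, 2q-k)_\Q$. Levine's Theorem 4.5 asserts that this pull-back is an isomorphism for every $q$ and every $k$, which immediately gives the stated quasi-isomorphism of complexes.

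For a more intrinsic chain-level viewpoint (though not strictly needed here), the underlying homotopy on the cycle complex is built from the multiplication map $\mu : \A^1 \times \A^1 \to \A^1$, $(t, s) \mapsto ts$. Given an admissible cycle $Z \subset X \times \A^1 \times \square_{\K}^{2q-k}$, one forms $\mu^* Z$ in $X \times \A^1 \times \A^1 \times \square_{\K}^{2q-k}$ and interprets the extra $\A^1$ as an additional cubical coordinate via $\A^1 \simeq \square_{\K}^1$; a direct computation with the boundary operator $\dN$ then produces $\dN h + h \dN = \mathrm{id} - p^* i_0^*$ where $i_0 : X \hookrightarrow X \times \A^1$ is the zero section. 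The only real obstacle in this explicit construction is checking that $\mu^* Z$ is admissible with respect to all the faces, which would require a moving lemma argument; since we are already content to appeal to Levine's theorem, this issue is handled internally there and does not need to be treated here.
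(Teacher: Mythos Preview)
Your proposal is correct and matches the paper's treatment: the paper does not give an independent proof but simply deduces the proposition from Levine's Theorem~4.5 in \cite{LevBHCG}, exactly as you do via the identification $\HH^k(\mc N_X^\bullet(q))\simeq \CH^q(X,2q-k)_\Q$. Your aside about the multiplication-map homotopy also mirrors the paper's Remark~\ref{remA1hom}, which points to the same construction (later made precise in Proposition~\ref{multhomo}).
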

Moreover, an inverse to $p_X^*$ on the cohomology is given by the pull-back by
the zero section $i_0^* : \HH^k(\Nc[X
\times \A^1](p)) \lra \HH^k(\Nc[X](p))$.
\begin{rem}\label{remA1hom}
The proof of Levine's theorem also tells how this quasi-isomorphism arises
using the multiplication map $\A^1 \times \A^1 \lra \A^1$. This leads to
the proof of Proposition \ref{multhomo}.
\end{rem}
We now apply the above result in the case where  $X=\Sp(\Q)$, and use the
relation with $K$-theory via equation \eqref{eqCHQ}.
\begin{coro}\label{H2A1} The second cohomology group
  of $\mc N_{\A^1}^{\bullet}$ 
  vanishes:
\[
\forall\, p\geqs 1 \qquad \HH^2(\Nc[\A^1](p))\simeq\CH^p(\A^1,2p-2)_{\Q}\simeq 
\CH^p(\Q,2p-2)_{\Q}=0.
\]
\end{coro}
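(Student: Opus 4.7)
The statement is a direct concatenation of three results already available in the excerpt, so my plan is essentially to chain them together rather than to develop any new argument.

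First, I would invoke the theorem identifying the cohomology of the cycle complex with higher Chow groups: applied to $X=\A^1$ in cohomological degree $k=2$ and weight $p$, this gives
\[
\HH^{2}\bigl(\Nc[\A^{1}](p)\bigr)\;\simeq\;\CH^{p}(\A^{1},2p-2)_{\Q}.
\]
This is the first isomorphism in the statement and requires no additional work.

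Next, I would use the $\A^{1}$-homotopy invariance proposition with $X=\Sp(\Q)$. The structural projection $p\colon \Sp(\Q)\times \A^{1}=\A^{1}\longrightarrow \Sp(\Q)$ induces a quasi-isomorphism $p^{\ast}\colon \mc N^{\bullet}_{\Sp(\Q)}(p)\to \mc N^{\bullet}_{\A^{1}}(p)$, hence an isomorphism on $\HH^{2}$, and therefore
\[
\CH^{p}(\A^{1},2p-2)_{\Q}\;\simeq\;\CH^{p}(\Q,2p-2)_{\Q}.
\]
This is the second isomorphism. One may also describe the inverse as $i_{0}^{\ast}$, as noted in the remark following the proposition, but this description is not needed for the vanishing statement.

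Finally, I would apply Equation~\eqref{eqCHQ}, which identifies $\CH^{p}(\Q,2p-2)_{\Q}$ with $\Gr_{\lambda}^{p}K_{2p-2}(\Q)\otimes\Q$, and invokes Borel's computation of the rational $K$-theory of $\Q$: the even $K$-groups $K_{2n}(\Q)\otimes\Q$ vanish for $n\geqs 1$, and for $n=0$ the group $K_{0}(\Q)=\Z$ lies entirely in weight $0$, so the $p$-th graded piece vanishes for every $p\geqs 1$. Combining the three steps yields the desired vanishing. There is really no main obstacle here; the only subtlety to mention is the boundary case $p=1$ (highlighted by the marginal note), which is handled by the weight-zero placement of $K_{0}(\Q)$ in the $\gamma$-filtration.
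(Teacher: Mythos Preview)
Your proposal is correct and follows exactly the approach indicated in the paper: the corollary is stated without a written proof, but the sentence preceding it says to apply the $\A^1$-homotopy invariance result with $X=\Sp(\Q)$ and then use Equation~\eqref{eqCHQ} coming from Borel's computation, which is precisely the chain of identifications you spell out. Your explicit treatment of the boundary case $p=1$ is a nice addition, addressing the author's own marginal query.
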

\subsubsection{Localization sequence}  
Let $W$ be a smooth closed subvariety of pure codimension $d$ of a smooth
quasi-projective variety $X$. Let $U$ denote the open complement $U=X \sm W$. A
 version adapted to our needs of Theorem 3.4 in \cite{LevBHCG} gives the
 localization sequence for higher Chow groups.  
\begin{thm}[\cite{LevBHCG}]
Let $p$ be a positive integer and $l$ an integer. There is a long exact sequence
\begin{equation}\label{locseq}
\cdots \lra \CH^p(U,l+1)_{\Q} \st{\delta}{\lra} \CH^{p-d}(W,l)_{\Q}  \st{i_{*}}{\lra}
\CH^p(X,l)_{\Q}  \st{j^*}{\lra} \CH^p(U,l)_{\Q} \st{\delta}{\lra} \cdots
\end{equation}
where $i: W \ra X$ denotes the closed immersion and $j:U\ra X$ the open one.
\end{thm}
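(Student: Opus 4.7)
The plan is to realize the localization sequence as the cohomology long exact sequence of a short exact sequence of cycle complexes. Let $\cNg{X}{W,\bullet}(p)\subset\cNg{X}{\bullet}(p)$ denote the subcomplex generated by admissible codimension $p$ cycles whose support lies in $W\times\square^{\bullet}$. Since $j\times\id:U\times\square^n\ra X\times\square^n$ is flat, its pullback is defined on the cycle complex and has kernel exactly $\cNg{X}{W,\bullet}(p)$, so one obtains a left-exact sequence
\[
0\lra\cNg{X}{W,\bullet}(p)\lra\cNg{X}{\bullet}(p)\st{j^*}{\lra}\cNg{U}{\bullet}(p).
\]

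My first step is to identify $\cNg{X}{W,\bullet}(p)$ with $\cNg{W}{\bullet}(p-d)$, up to the natural $2d$-shift in the cohomological index coming from the codimension jump. Since $W$ is smooth of pure codimension $d$ in $X$, proper push-forward $(i\times\id)_*$ sends admissible codimension $p-d$ cycles on $W\times\square^n$ to admissible codimension $p$ cycles on $X\times\square^n$ supported on $W\times\square^n$, and conversely every irreducible subvariety of $X\times\square^n$ supported on $W\times\square^n$ arises this way. One checks that this gives an isomorphism of complexes $i_*:\cNg{W}{\bullet}(p-d)\ira\cNg{X}{W,\bullet+2d}(p)$, inducing on cohomology the map $\CH^{p-d}(W,l)_{\Q}\st{i_*}{\ra}\CH^p(X,l)_{\Q}$ that appears in \eqref{locseq}.

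The main obstacle is that $j^*$ is not surjective on the nose: an admissible cycle on $U\times\square^n$ need not extend to an admissible cycle on $X\times\square^n$ meeting all faces \emph{and} $W\times F$ properly. To handle this I would invoke Bloch's moving lemma in its cubical formulation (see \cite{BlochMLHCG} and its refinement in \cite{LevBHCG}). Letting $\cNg{X}{\bullet}(p)_W\subset\cNg{X}{\bullet}(p)$ denote the subcomplex of cycles which moreover intersect every $W\times F$ properly, the moving lemma asserts that the inclusion $\cNg{X}{\bullet}(p)_W\hra\cNg{X}{\bullet}(p)$ is a quasi-isomorphism, and that any admissible cycle on $U\times\square^n$ lifts, via closure in $X\times\square^n$, to an element of $\cNg{X}{\bullet}(p)_W$. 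Granting this, one obtains a genuine short exact sequence of complexes
\[
0\lra\cNg{X}{W,\bullet}(p)\lra\cNg{X}{\bullet}(p)_W\st{j^*}{\lra}\cNg{U}{\bullet}(p)\lra 0.
\]

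Finally, I would pass to the long exact sequence in cohomology and apply the identifications $\HH^k(\cNg{X}{\bullet}(p)_W)\simeq\HH^k(\cNg{X}{\bullet}(p))=\CH^p(X,2p-k)_{\Q}$, $\HH^k(\cNg{U}{\bullet}(p))=\CH^p(U,2p-k)_{\Q}$, and $\HH^k(\cNg{X}{W,\bullet}(p))=\CH^{p-d}(W,2p-k)_{\Q}$ (after the $2d$-shift), then reindex by $l=2p-k$ to recover \eqref{locseq}; the connecting morphism is the snake-lemma boundary. The hard step is genuinely the moving lemma itself, which is proved by deforming a given cycle via well-chosen one-parameter families of automorphisms of $\square^n$ so as to enforce proper intersection with $W$ and all faces without changing its class in the cycle complex.
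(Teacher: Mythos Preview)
The paper does not prove this theorem at all: it is simply quoted from \cite{LevBHCG} (Levine, \emph{Bloch's higher Chow groups revisited}) as a known result and then applied in Corollary~\ref{locP1}. So there is no ``paper's own proof'' to compare against.

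That said, your sketch is the standard route and is essentially how the result is established in the cited literature (going back to Bloch \cite{BlochACHKT,BlochMLHCG} in the simplicial setting and Levine \cite{LevBHCG} in the cubical one): one sets up the short exact sequence of cycle complexes supported on $W$, on $X$ (with the extra proper-intersection condition along $W$), and on $U$; identifies the first with $\cNg{W}{\bullet}(p-d)$ via $i_*$; and then appeals to the moving lemma to show both that the inclusion $\cNg{X}{\bullet}(p)_W\hookrightarrow\cNg{X}{\bullet}(p)$ is a quasi-isomorphism and that $j^*$ is surjective onto $\cNg{U}{\bullet}(p)$. One small correction: closure of an admissible cycle on $U$ does not in general land in $\cNg{X}{\bullet}(p)_W$; rather, the moving lemma shows that the cokernel of $j^*$ on the full complex is acyclic, which is what actually yields the long exact sequence. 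Apart from that nuance, your outline is accurate.
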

\begin{rem} Here, $i_{*}$ and $j^*$ denote the usual push-forward for proper
  morphisms and pull-back for flat ones. 
\end{rem}
In order to study the cycle complex over $\ps$, we begin by applying the above
theorem to the case  where $X=\A^1$, $U=\p^1\sm\{0,1,\infty\}$ and $W=\{0,1\}$. 
\begin{coro}\label{locP1}For $p\geqs 0$ and $k \in \Z$, we have the following
description of $\HH^k(\mc N^{\bullet}_{\ps})$: 
\begin{multline*}
\HH^k(\mc N_{\ps}^{\bullet})(p)\simeq \\
 \HH^k(\mc N_{\Q}^{\bullet})(p)\oplus
\left(\HH^{k-1}(\mc N_{\Q}^{\bullet})(p-1)\otimes \Q [L_0] \right) 
\oplus \left(\HH^{k-1}(\mc N_{\Q}^{\bullet})(p-1)\otimes \Q [L_1]\right),
\end{multline*}
where $[L_0]$ and $[L_1]$ are cohomology classes represented by cycles $L_0$ and
$L_1$ respectively which  are in cohomological degree $1$ and weight $1$ (i.e.
of codimension $1$).
\end{coro}
\begin{proof}
For any integer $l$, the long exact sequence above gives 
\begin{multline}\label{seq:CHlocA1}
\cdots\lra  \CH^{p-1}(\{0,1\},l)_{\Q}  \st{i_{*}}{\lra} \CH^p(\A^1,l)_{\Q} \lra
\CH^{p}(X,l)_{\Q}  \st{\delta}{\lra} \\
\CH^{p-1}(\{0,1\},l-1)_{\Q}  \st{i_{*}}{\lra} \CH^p(\A^1,l-1)_{\Q} \lra \cdots
\end{multline}
The map $i_*$ is induced by the inclusions $i_0$ and $i_1$ of $0$ and $1$ into
$\A^1$. The morphism $i_0^* : \HH^{k}(\Nc[\A^1]) \lra \HH^k(\Nc[\{0\}])$, and
more generally $i_x^*$ for 
any $\Q$ point $x$ of $\A^1$, is an isomorphism inverse to
$p_{\Spec(\Q)}^* : \A^1 \ra \Spec(\Q)$. Hence the Cartesian diagram
\[
\begin{tikzpicture}
\matrix (m) [matrix of math nodes,
 row sep=3em, column sep=3em, 
 text height=1.5ex, text depth=0.25ex] 
 {\emptyset & \Sp(\Q) \\
 \Sp(\Q) & \A^1\\};
 \path[->,font=\scriptsize]
 (m-1-1) edge node[auto] {$$} (m-1-2)%;
(m-1-2) edge node[auto] {$i_x$} (m-2-2)%;
(m-2-1) edge node[auto] {$i_0$} (m-2-2)%;
(m-1-1) edge node[auto] {$$} (m-2-1);
%\path[font=\scriptsize]
(m-1-1) edge node[descr] {$\square$} (m-2-2);
%\path[->,font=\scriptsize, bend left]
%(m-1-4) edge node[auto] {$x^*$} (m-1-3);
 \end{tikzpicture}
\]
shows that  $i_{0,*}$ (and respectively $i_{1,*}$) is $0$ on cohomology. 

In particular, the sequence \eqref{seq:CHlocA1} is a collection  of short
exact sequences
\begin{equation*}
0 {\lra}
\CH^p(\A^1,l)_{\Q} \lra 
\CH^{p}(X,l)_{\Q}  \st{\delta}{\lra} 
\CH^{p-1}(\{0,1\},l-1)_{\Q} {\lra}0 .
\end{equation*}

 Thus, for fixed $l$, using the $\A^1$-homotopy property and the fact that
 \[
\CH^p(\{0,1\},l)\simeq \CH^p(\Sp(\Q),l) \oplus \CH^p(\Sp(\Q),l),
\] we obtain the following short exact sequence
\begin{equation*}
0 {\lra}
\CH^p(\Sp(\Q),l)_{\Q} \lra 
\CH^{p}(X,l)_{\Q}  \st{\delta}{\lra} 
\CH^{p-1}(\Sp(\Q),l-1)_{\Q}^{\oplus 2} {\lra}0,
\end{equation*}
 and an isomorphism
\[
\CH^{p}(X,l)_{\Q}  \st{\sim}{\lra} \CH^{p}(\A^1,l)_{\Q}\oplus
\CH^{p-1}(\Sp(\Q),l-1)_{\Q}^{\oplus 2}. 
\]
The relation between the cohomology groups of $\Nc[X](p)$ and the higher Chow
groups concludes the proof.
\end{proof}
\begin{rem} 
The above corollary for $k=1$ gives us a description of
$\HH^1(\cNg{\ps}{\bullet})$ which is the key object of the first step of the
$1$-minimal construction mentioned in the introduction. Hence this corollary is
the starting point of the construction of our algebraic cycle.

The generators $L_0$ and $L_1$ will be given explicitly in terms of
  cycles in $\Nc[\ps]$ at Subsection \ref{subs:poly}. This allows us to
  follow the $1$-minimal model construction in order to build our algebraic cycles. 
\end{rem}
\subsection{The cycle complex over $\p^1 \sm \{0,1,\infty\}$ and mixed Tate motives}   
Levine in \cite{LEVTMFG} makes the link between the category of mixed Tate
motives (in the sense of Levine \cite{KTMMLevine} or Voevodsky \cite{Vo00}) over a
base $X$  and the cycle complex $\mc N_X$. 
 The relation between mixed Tate motives and
the cycle complex was developed earlier for $X=\Sp(\Q)$, the spectrum of
a number field, by Bloch and Kriz in \cite{BKMTM}.

In what follows, $X$ still denotes a smooth,
quasi-projective 
variety over ${\Q}$. We will work with coefficients in $\Q$.

 Under more general conditions, Cisinski and D\'eglise \cite{CiDegTCM} defined
a triangulated category $\DM(X)$ of (effective) motives over a base with the
expected properties. Levine's work
\cite{LevTM,LEVTMFG} shows that when the motive of  $X$ is mixed Tate over
$\Spec(\Q)$ and satisfies the Beilinson-Soul\'e vanishing
conjecture, there exists a Tannakian category $\MTM(X)$ of mixed Tate motives
over $X$.

Together with defining an avatar of $\mc N_X$ in $\DM(X)$, Levine
\cite{LEVTMFG}[Theorem 5.3.2 and beginning of the section 6.6]
shows that when $X$ satisfies the above conditions, we can identify the
Tannakian group associated with $\MTM(X)$ with 
the spectrum of the $\HH^0$ of the bar construction 
over the cdga $\mc N_X$: 
\[
G_{\MTM(X)}\simeq \Sp(\HH^0(B(\mc N_X))).
\]

Then he uses a relative bar-construction in order to relate the natural
morphisms
\[
p^* : \MTM(\Sp({\Q})) \lra \MTM(X), \qquad x^* : \MTM(X) \lra \MTM(\Sp({\K})),
  \]
induced by the structural morphism $p : X \ra \Sp({\Q})$ and a choice of a
${\Q}$-point $x$ to the motivic fundamental group of $X$ at the base point
$x$ defined by Goncharov
and Deligne, $\pi_1^{mot}(X,x)$ (see \cite{GFPLDeli} and \cite{DG}).

In particular, applying this to the  the case $X=\ps$, we have the
following result. 
\begin{thm}[{\cite{LEVTMFG}[Corollary 6.6.2]}]\label{pi1exactseq} Let $x$ be a
  $\Q$-point of   $\ps$. Let $G_{\ps}$ and $G_{\Q}$ denote the spectrum of
  $\HH^0(B(\mc N_{\ps}))$ and $\HH^0(B(\mc N_{\Q}))$ respectively.
 Then there is a split exact sequence:

 \begin{equation}\label{eq:motsespi1}
 \begin{tikzpicture}[baseline=(current bounding box.center)]
\matrix (m) [matrix of math nodes,
 row sep=0.6em, column sep=2.5em, 
 text height=1.5ex, text depth=0.25ex] 
 {1 & {\pi_1^{mot}(\ps,x)} & G_{\ps} &G_{\Q} & 1
 \\};
 \path[->,font=\scriptsize]
 (m-1-1) edge node[auto] {} (m-1-2)%;
(m-1-2) edge node[auto] {} (m-1-3)%;
(m-1-3) edge node[auto] {$p^*$} (m-1-4)%;
(m-1-4) edge node[auto] {} (m-1-5);
\path[->,font=\scriptsize, bend left]
(m-1-4) edge node[auto] {$x^*$} (m-1-3);
 \end{tikzpicture}
 \end{equation}
where $p$ is the structural morphism $p: \ps \lra \Sp(\Q)$.
\end{thm}
%
%----------------------------
Let $\Mm[\Q]$ and $\Mm[\ps]$ denote  the Lie
coalgebra given by the set of indecomposable elements of the $\HH^0$ of the
bar construction of $\Nc[\Sp(\Q)]$ and $\Nc[\ps]$ respectively. Then, taking the
ring of functions and  passing to the set of indecomposable elements, the short
exact sequence 
\eqref{eq:motsespi1}  can be reformulated in terms of Lie coalgebras.

Thus there is a split exact sequence of Lie coalgebras:
 \[
 \begin{tikzpicture}
\matrix (m) [matrix of math nodes,
 row sep=0.6em, column sep=2.5em, 
 text height=1.5ex, text depth=0.25ex] 
 {0 &\Mm[\Q] & \Mm[\ps] &\cLie^{geom}_{\ps} & 0
 \\};
 \path[->,font=\scriptsize]
 (m-1-1) edge node[auto] {} (m-1-2)%;
(m-1-2) edge node[auto] {} (m-1-3)%;
(m-1-3) edge node[auto] {} (m-1-4)%;
(m-1-4) edge node[auto] {} (m-1-5);
 \end{tikzpicture}
 \]  
where $\cLie^{geom}_{\ps}$ is the graded dual of  the Lie algebra associated to
$\pi_1^{mot}(X,x)$.

In particular $\cLie^{geom}_{\ps}$ is the Lie coalgebra which is the graded dual of the free Lie
algebra on two generators $\Lie(X_0,X_1)$. This observation motivated the
study of this Lie coalgebra in Section \ref{sec:Combi} in order to understand
the combinatorial structure of our construction. This Lie coalgebra is presented
in a ``trivalent tree'' version because of the combinatorial construction of
Gangl, Goncharov and Levin \cite{GanGonLev05}, which is based on linear
combination of trivalent trees. Their approach seems to be related to the graded
dual of the universal enveloping algebra of  $\Lie(X_0,X_1)$.

\section{Low weight examples and  cycles corresponding to polylogarithms} 
\label{subs:poly}From now until the end of the article,
 we let $X$ denote $\ps$.

In this section, we give a first presentation of our strategy to construct
general cycles in $\Nc$ 
corresponding to multiple polylogarithms by examining the simple case of the
polylogarithms $Li_n^{\C}$. We will pay special attention to the Totaro cycle,
which is known to correspond to the function $Li^{\C}_2(z)$, and then explain how 
its construction can be generalized to obtain cycles already present in \cite{BKMTM} and
\cite{GanGonLev05} corresponding to the functions
$Li_n^{\C}(z)$. 

\subsection{Two weight $1$ examples of cycles generating the $\HH^1$}
We want to construct cycles in $\Nc$ in order to obtain the inductive
construction of the $1$-minimal model. This means that we will
\begin{enumerate}
\item find in $\mc N_ X^2$ linear combinations
of products of already constructed cycles that are boundaries, i.e. $\dN(c)$ for some
cycle $c$ in $\cN[1]$;
\item explicitly construct the desired $c$. 
\end{enumerate}
The first step begins with a basis of $\HH^1(\cNg{X}{\bullet})$. However, as we only
want a description of the geometric part of the $1$ minimal model (i.e.~relative to the situation over $\Spec(\Q)$),  we do
not need to consider a 
full basis of $\HH^1(\cNg{X}{\bullet})$. We saw that
$\HH^1(\cNg{X}{\bullet})$  (Corollary
\ref{locP1}) is the direct sum of $\HH^1(\cNg{ \Q}{\bullet})$
and two copies of $\HH^{0}(\mc N_{\Q}^{\bullet})$.
\begin{lem} Let $\Gamma_{0}$ and $\Gamma_1$ be 
 the graph of $\rho_0 : X \lra \square^1$ which sends $x$ to $x$
  and the graph of  $\rho_1 : X \lra \square^1$ which sends $x$ to $1-x$
respectively. Then
  $\Gamma_0$ and $\Gamma_1$ define 
admissible algebraic cycles in $X \times \square^1$. Applying the projector
$\Alt$ to the alternating elements gives two elements $L_0$ and $L_1$ in $\mc
N_X^1$, and we have
\[
\HH^1(\mc N_{\ps}^{\bullet})\simeq \HH^1(\cNg{ \Q}{\bullet}) \oplus 
\left( \HH^{0}(\mc N_{\Q}^{\bullet})\otimes \Q L_0\right) 
\oplus
\left( \HH^{0}(\mc N_{\Q}^{\bullet})\otimes \Q L_1\right).
\]
\end{lem}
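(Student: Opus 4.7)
The plan is to exploit the decomposition already established in Corollary \ref{locP1} and then to identify $[L_0]$ and $[L_1]$ as canonical lifts of the two generators of the $\HH^0(\cNg{\Q}{\bullet})^{\oplus 2}$ summand, using the connecting homomorphism $\delta$ of the localization sequence.

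First I would verify admissibility and closedness. The faces of $\square^1 = \p^1 \sm \{1\}$ are $\{0\}$ and $\{\infty\}$. For $\Gamma_0 = \{(t,t)\}$ the equation $t = 0$ (resp.\ $t = \infty$) has no solution in $X = \ps$, so $\Gamma_0 \cap (X \times \{0\}) = \emptyset = \Gamma_0 \cap (X \times \{\infty\})$; likewise the equations $1-t = 0$ and $1-t = \infty$ defining $\Gamma_1 \cap (X \times \{\varepsilon\})$ have no solution in $X$. Hence $\Gamma_0$ and $\Gamma_1$ are admissible and, because every face pullback is empty, $\dN \Gamma_0 = \dN \Gamma_1 = 0$. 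Applying $\Alt_1 = \tfrac{1}{2}(1-\sigma)$ (with $\sigma: x \mapsto 1/x$) yields closed elements $L_0, L_1 \in \cN[1]$, so they define classes in $\HH^1(\Nc[\ps])$.

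The heart of the proof will be to compute $\delta([L_0])$ and $\delta([L_1])$ in the localization sequence used to prove Corollary \ref{locP1}, namely the one arising from the short exact sequence of complexes associated to $W = \{0,1\} \hookrightarrow \A^1 \hookleftarrow X$. Concretely, $\delta([Z])$ is represented by $\partial \bar Z$ for any lift $\bar Z \in \Nc[\A^1]$ of $Z$: since $\partial Z = 0$ on $X$, the cycle $\partial \bar Z$ is supported over $W$ and therefore defines a class in $\HH^0(\Nc[W]) \simeq \HH^0(\Nc[\Q])^{\oplus 2}$ via the localization quasi-isomorphism $\Nc[W]^{\bullet} \xrightarrow{i_\ast} \Nc[\A^1]_W^{\bullet+2}$. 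For $\bar\Gamma_0 = \{(t,t) \mid t \in \A^1\} \subset \A^1 \times \square^1$, the pullbacks to the faces give $\dN_1^0 \bar\Gamma_0 = \{0\}$ and $\dN_1^\infty \bar\Gamma_0 = \emptyset$; thus $\partial \bar\Gamma_0 = \{0\}$, a codimension $1$ point of $\A^1$, which under $i_\ast$ corresponds to the generator at the point $0 \in W$. Similarly $\partial \bar\Gamma_1 = \{1\}$, corresponding to the generator at $1 \in W$. The contributions from $\sigma^\ast \Gamma_i$ (the second term of $\Alt_1$) have to be treated analogously: their closures acquire the extra component $(0,\infty)$ or $(1,\infty)$ which, after the sign from $\dN_1^\infty$, recovers the same classes, so that $\delta([L_0])$ and $\delta([L_1])$ are indeed the two distinguished generators of $\CH^0(\{0,1\},0)_\Q$.

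Combining this computation with Corollary \ref{locP1} finishes the proof: the splitting of the short exact sequence there is realised concretely by the lifts $L_0$ and $L_1$, and the first summand $\HH^1(\cNg{\Q}{\bullet})$ is nothing but $\ker(\delta)$. The main obstacle I anticipate is a bookkeeping one, namely keeping track of the $\Alt_1$ projector and of the shift-by-$2$ convention in the localization quasi-isomorphism $\Nc[W] \simeq \Nc[\A^1]_W$, so as to match signs and make sure that $\delta([L_0])$ and $\delta([L_1])$ really span the whole $\HH^0(\cNg{\Q}{\bullet})^{\oplus 2}$; everything else reduces to the explicit intersections computed above.
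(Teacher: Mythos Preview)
Your proposal is correct and follows essentially the same route as the paper: verify admissibility and closedness by checking that all face intersections are empty on $X$, then prove non-triviality by computing the connecting map $\delta$ of the localization sequence via the closures in $\A^1\times\square^1$ and identifying the results with the generators of $\HH^0(\cNg{\Q}{\bullet})$ at $0$ and at $1$. If anything, you are slightly more careful than the paper about the second term coming from the $\Alt_1$ projector; the paper simply writes $\ol{L_0}=[t;t]$ and leaves that part implicit.
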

When we speak about parametrized cycles, we will usually omit the projector $\Alt$
and write
\[
L_{0} =[x; x]\qquad \mx{and} \qquad L_{1}=[x; 1-x] 
\qquad \subset X \times \square^1
\]
where the notation $[x;f(x)]$ denotes the set
\[
\{(x,f(x)) \mbox{ such that } x \in X\}.
\]

\begin{proof}
First of all, note that $L_0$ and $L_1$ are codimension $1$ cycles in
$X \times \square^{1}=X \times \square^{2*1-1}$. Moreover, as 
\[
L_0 \cap \left(X\times\{\ve\}\right)=L_0 \cap \left(\ps \times\{\ve\}\right)=\emptyset,
\]
for $\ve=0, \infty$, $L_0$ is admissible (i.e.~intersects each face
in the right codimension or not at all) and gives an element of
$\cN[1](1)$. Furthermore, the 
above intersection tells us that $\dN(L_0)=0$. Similarly, we obtain that $L_1$
gives an element of $\cN[1](1)$, and that $\dN(L_1)=0$. Thus
$L_0$ and $L_1$ yield well defined classes in $\HH^1(\cN[\bullet](1))$.

In order to show that they are non-trivial, we show that in the localization
sequence \eqref{locseq}, their images under the boundary map 
\[\HH^1(\Nc (1)) \st{\delta}{\lra} 
\HH^0(\Nc[\{0\}](0))\oplus  \HH^0(\Nc[\{1\}](0))
\] are non-zero. It is enough to treat the
case of $L_0$. 
Let $\ol{L_0}$ be the closure of $L_0$ in $\A^1\times \square^1$. 
Indeed, $\ol{L_0}$ is given by the parametrized cycle
\[
\ol{L_0}=[x; x] \subset \A^1\times \square^1,
\]
and the intersection with the face $u_1=0$ is of codimension $1$ in
$\A^1\times\{0\}$ and the intersection with $u_1=\infty$ is empty. Hence
$\ol{L_0}$ is admissible.
%%%%%%%%%%%%%%%

Thus, considering the definition of $\delta$, $\delta(L_0)$ is given by the
intersection of the differential 
of $\ol{L_0}$ with $\{0\}$ and $\{1\}$ on the first and second
factors respectively. The above discussion on the admissibility of $\ol{L_0}$ 
shows that
$\delta(L_0)$ is non-zero on the factor $\HH^0(\Nc[\{0\}](0)) $ and $0$ on the
other factor, as the admissibility condition is trivial in $\HH^0(\Nc[\{0\}](0))
$ and the restriction of $\ol{L_0}$ to $1$ is empty. The situation is reversed
for $L_1$. 
\end{proof}
Later we will consider cycles depending on many parameters, and denote by 
\[
[x;f_1(x ,\mb t), f_2(x,\mb t), \ldots , f_n(x,\mb t)] \subset X\times \square^n
\]
the (image under the projector $\Alt$  of the) restriction to $X \times \square^n$
of the image of 
\[
 \begin{tikzpicture}
\matrix (m) 
[matrix of math nodes,
 row sep=0.6em, column sep=8em, 
 text height=1.5ex, text depth=0.25ex,nodes={rectangle,
   minimum width=8em}] 
{ 
X \times (\p^1)^k & X \times (\p^1)^n \\
 (x, \mb t) &(x,f_1(x ,\mb t), f_2(x,\mb t), \ldots , f_n(x,\mb t)). \\
};
 \path[->,font=\scriptsize]
 (m-1-1) edge node[auto] {} (m-1-2);
 \path[|->,font=\scriptsize]
(m-2-1) edge node[auto] {} (m-2-2);
 \end{tikzpicture}
 \]  

\subsection{A weight $2$ example: the Totaro cycle}
Consider the linear combination
\[
b=L_{0}\cdot L_{1} \in \cN[2](2).
\]
It is given as a parametrized cycle by 
\[
b=[x; x, 1-x] \subset X \times
\square^2
\]
or in terms of defining equations by
\[
X_1V_1-U_1X_2= 0 \qquad \mbox{and} \qquad U_1V_2+U_2V_1=V_1V_2
\]
where $X_1$ and $X_2$ denote the homogeneous coordinates on $X=\ps$ and $U_i$, 
$V_i$ the homogeneous coordinates on each factor $\square^1=\p^1\sm \{1\}$ of $\square^2$.
The intersection of $b$ with any faces ($U_i$ or $V_i=0$ for 
any $i$'s) is empty, because $X_1$ and $X_2$ are different from $0$  in $X$ and
because $U_i$ is different from $V_i$ in $\square^1$. This comment ensures 
that $b$ is admissible. 

Moreover, it implies that $\dN(b)=0$. So $b$ gives a class
\[
[b] \in \HH^2 (\Nc(2)).
\]
Let us show that this class is trivial.

Let $\ol b$ denote the algebraic closure of $b$ in $\A^1\times \square^2$. 
 As before, the intersection with
$\A^1\times F$ for any face $F$ of $\square^2$ is empty, and $\ol b$ (after
applying the projector $\Alt$) gives 
\[
\ol b \in \mc N_{\A^1}^2(2).
\] 
Write $\dN_{\A^1}$ for the differential in $\mc N_{\A^1}$. Then
$\dN_{\A^1}(\ol b)=0$ and $\ol b$ defines a class
 \[
[\bar b] \in \HH^2 (\Nc[\A^1](2)).
\]
As Corollary \ref{H2A1} ensures that $\HH^2 (\Nc[\A^1](2))=\CH^2(\A^1,2)=0$,
there exists $\ol c \in \mc N_{\A^1}^1(2)$ such that 
\[
\dN_{\A^1}(\ol c)=\ol b.
\]

Moreover, note that $\ol b|_0=\ol b|_1=\emptyset$. The multiplication map

\[ \begin{tikzpicture}[baseline={([yshift=0.5em] current bounding box.south)}]
 \matrix (m) [matrix of math nodes,
 row sep=0.6em, column sep=2em, 
text height=1.5ex, text depth=0.25ex] 
 {\A^1 \times \square^1 \times \square^{2} 
& \A^1 \times  \square^{2}  \\
};
\path[->,font=\scriptsize]
(m-1-1) edge node[auto] {$\mu$} (m-1-2);
%(m-2-1) edge node[auto] {} (m-2-2)
%\path[|->,font=\scriptsize]
%(m-2-1) edge node[auto] {$$} (m-2-2);
\end{tikzpicture}\!,
\quad
\begin{tikzpicture}[baseline={([yshift=0.5em] current bounding box.south)}]
 \matrix (m) [matrix of math nodes,
 row sep=0.6em, column sep=2em, 
text height=1.5ex, text depth=0.25ex] 
 {
{[x ; u_1, u_2, u_{3}]}  &
{[\frac{x}{1-u_1} ; u_2, u_{3}]}   \\
 };
\path[|->,font=\scriptsize]
(m-1-1) edge node[auto] {$$} (m-1-2);
 \end{tikzpicture} \]
is flat.  Hence, we can consider the pull-back of the cycle $\ol
b$ by $\mu$. This pull-back is 
given explicitly (after reparametrization) by
\[
\mu^*(\ol b)=[x; 1 -\frac{x}{t_1} ,t_1,1-t_1 ] \subset \A^1\times \square^3.
\]

This is nothing but an $\A^1$-based variant of Totaro's cycle \cite{Totaro},
already described in 
\cite{BKMTM, BlochLie}, and it gives a well-defined element in $\mc N_{\A^1}^1(2)$.
\begin{defn} Let $L_{01}=\on{Li}^{cy}_2$ denote the cycle 
\[
L_{01}=[x; 1 -\frac{x}{t_1} ,t_1,1-t_1 ] \subset X \times \square^3
\]
in $\mc N_X^1(2)$.
\end{defn}

\begin{rem}
The cycle  $L_{01}$ corresponds to the function $x \mapsto \on{Li_2}(x)$, as shown in
\cite{BKMTM} or in \cite{GanGonLev05}.
\end{rem}
From the parametrized expression above, we obtain the following result. 
\begin{lem} The cycle $L_{01}$ satisfies the following properties
\begin{enumerate}
\item $\dN(L_{01})=b$.
\item $L_{01}$ extends to $\A^1$, i.e.~its closure $\ol{L_{01}}$ in $\A^1 \times \square^3$
  gives a well-defined element in  $\mc N_{\A^1}^1(2)$. 
\item $\ol{L_{01}}|_{x=0}=\emptyset$ and $\ol{L_{01}}|_{x=1}$ are well-defined.
\end{enumerate}
\end{lem}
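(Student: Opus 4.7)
The plan is to prove the three properties by direct face-by-face computation on the parametrized cycle
\[
L_{01}=[t;\,1-t/x_1,\,x_1,\,1-x_1]\subset X\times\square^3,
\]
using the coordinates $u_1=1-t/x_1$, $u_2=x_1$, $u_3=1-x_1$ on the cube factor.

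\textbf{Property (1).} I would compute each of the six codimension-one restrictions $L_{01}|_{u_i=\varepsilon}$, $\varepsilon\in\{0,\infty\}$, $i=1,2,3$. The face $u_1=0$ forces $x_1=t$, so the restriction is $[t;t,1-t]=b$. For the remaining five faces one checks that the required specialization of $x_1$ pushes another coordinate to the forbidden value $1\in\p^1$ (e.g. $u_2=0$ forces $x_1=0$, whence $u_3=1\notin\square^1$), so the restriction is empty. Assembling these with the signs in the definition of $\dN$ gives $\dN(L_{01})=b$.

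\textbf{Admissibility.} To justify that $L_{01}$ genuinely lies in $\cN[1](2)$, I would verify that intersections with higher-codimension faces are empty: each pair of cube-equations among $u_1=\varepsilon,u_2=\varepsilon',u_3=\varepsilon''$ imposes two simultaneous conditions on the single parameter $x_1$ that are either incompatible (e.g. $x_1=t$ and $x_1=0$ would force $t=0$, excluded since $t\in X=\ps$) or push a coordinate to $1$. So all higher codimension face-intersections are already empty, and the codimension-one ones are either proper or empty by the computation above.

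\textbf{Properties (2) and (3).} For the extension, I would take the Zariski closure $\overline{L_{01}}$ of the same parametrization over $\A^1$. The admissibility analysis repeats verbatim: at $t=0$ one has $u_1=1-0/x_1=1$, which is excluded from $\square^1$, so $\overline{L_{01}}|_{t=0}=\emptyset$; this simultaneously shows (3) for the $t=0$ fiber and confirms that no new bad intersections appear there. At $t=1$, the parametrization $[1;\,1-1/x_1,x_1,1-x_1]$ in $x_1\in\A^1\setminus\{0,1\}$ is well defined and non-empty, giving the $t=1$ fiber of (3). Finally, the same face-by-face check over $\A^1$ (each face either specializes $x_1$ to $t$, to $0$, or to $\infty$) yields the proper-intersection / emptiness dichotomy, so $\overline{L_{01}}\in\mc N^1_{\A^1}(2)$.

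\textbf{Main obstacle.} The only point requiring care is the extension across $t=0$: one must confirm that the limit of the parametrized locus, and not just the generic fiber, is compatible with the faces of $\square^3$. This boils down to observing that the one-parameter family $x_1\mapsto(1-t/x_1,x_1,1-x_1)$ degenerates, at $t=0$, to the locus $u_1=1$, which lies entirely outside the cube, so the closure contributes nothing new on any face. Once this is checked, properties (1)--(3) follow from substitution.
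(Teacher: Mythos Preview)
Your proposal is correct and follows exactly the approach the paper intends: the paper simply writes ``From the parametrized expression above, one sees that'' and states the lemma without further detail, so the face-by-face verification you carry out is precisely the implicit argument being invoked. Your case analysis of the six codimension-one faces and the degeneration at $t=0$ (forcing $u_1=1\notin\square^1$) matches the paper's reasoning.
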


\subsection{Polylogarithm cycles}
We construct the cycles $\on{Li}^{cy}_n=L_{0\cdots 01}$ for $n>2$ inductively. 
Define
$\on{Li}^{cy}_1$ to be equal to $L_1$.
\begin{lem}\label{lem:cycleLi_k}
For any integer $n \geqs 2$, there exist cycles  $\on{Li}^{cy}_{n}$ in
$\mc N_{X}^1(n)$ satisfying
\begin{enumerate}
\item $\dN(\on{Li}^{cy}_{n})=L_0\cdot \on{Li}^{cy}_{n-1}$
\item $\on{Li}^{cy}_{n}$ extends to $\A^1$, i.e.~its closure
  $\ol{\on{Li}^{cy}_{n}}$ in $\A^1 \times \square^{2n-1}$
  yields a well-defined element in  $\mc N_{\A^1}^1(n)$.
\item $\ol{\on{Li}^{cy}_{n}}|_{x=0}=\emptyset$ and $\ol{\on{Li}^{cy}_{n}}|_{x=1}$ are 
well-defined.   
\item $\on{Li}^{cy}_{n}$ is explicitly given as a parametrized cycle by
\[
[x; 1-\frac{x}{t_{n-1}}, t_{n-1},1-\frac{t_{n-1}}{t_{n-2}}, t_{n-2}, 
\ldots, 1-\frac{t_2}{t_{1}}, t_1, 1-t_1 ] 
\subset X \times \square^{2n-1}
\] 
\end{enumerate}
\end{lem}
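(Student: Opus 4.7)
The plan is to proceed by induction on $n \geqs 2$, following closely the Totaro construction treated in the preceding example. The base case $n=2$ is the Totaro cycle, whose four properties have just been verified. So assume $\on{Li}^{cy}_{n-1} \in \mc N_X^1(n-1)$ and its $\A^1$-extension $\ol{\on{Li}^{cy}_{n-1}}$ have been constructed and satisfy the four properties.

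The first step is to set $b_n := L_0 \cdot \on{Li}^{cy}_{n-1} \in \cN[2](n)$, and to check it is $\dN$-closed. By Leibniz together with $\dN(L_0)=0$ and the inductive relation $\dN(\on{Li}^{cy}_{n-1}) = L_0 \cdot \on{Li}^{cy}_{n-2}$, one gets $\dN(b_n) = \pm L_0 \cdot L_0 \cdot \on{Li}^{cy}_{n-2}$; but $L_0$ is of odd cohomological degree, so $L_0 \cdot L_0 = 0$ in the alternating cycle complex, whence $\dN(b_n)=0$. Taking closures in $\A^1 \times \square^{2n-2}$ and invoking the induction hypothesis that $\ol{\on{Li}^{cy}_{n-1}}$ is admissible, one obtains an $\A^1$-extension $\ol{b_n} \in \mc N_{\A^1}^2(n)$ with $\dN_{\A^1}(\ol{b_n})=0$. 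A key observation is that $\ol{b_n}|_{t=0} = \emptyset$ (because by induction $\on{Li}^{cy}_{n-1}|_{t=0} = \emptyset$, so this fiber of the product is empty) and $\ol{b_n}|_{t=1} = \emptyset$ (because $L_0|_{t=1} = (1,1) \notin X \times \square^1$).

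The second step is the pull-back construction producing $\on{Li}^{cy}_n$. Let
\[
\mu : \A^1 \times \square^1 \times \square^{2n-2} \lra \A^1 \times \square^{2n-2}, \qquad (t, u_1, u_2, \ldots) \mapsto \Bigl(\tfrac{t}{1-u_1},\, u_2, \ldots\Bigr)
\]
be the multiplication map. It is flat, so $\mu^*(\ol{b_n})$ is a well-defined admissible cycle of codimension $n$ in $\A^1 \times \square^{2n-1}$, i.e.\ an element of $\mc N_{\A^1}^1(n)$; one then defines $\on{Li}^{cy}_n$ to be its restriction to $X=\ps$. By Remark \ref{remA1hom}, $\mu^*$ realizes the $\A^1$-homotopy between the identity and the composition $p^* \circ i_0^*$. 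Computing the differential, one obtains $\dN_{\A^1}\bigl(\mu^*(\ol{b_n})\bigr) = \ol{b_n} - p^*(\ol{b_n}|_{t=0})$, and since $\ol{b_n}|_{t=0} = \emptyset$, the second term vanishes. Restricting to $X$ gives property (1). Property (2) is immediate from the construction, and property (3) follows because $\mu^*(\ol{b_n})|_{t=0}$ is supported over $\ol{b_n}|_{t=0} = \emptyset$, while at $t=1$ the cycle is well-defined since $\ol{b_n}$ is defined on all of $\A^1$. Property (4), the explicit parametrized formula, is a direct computation of the pull-back under $\mu$ of the parametrized expression for $\ol{b_n} = [t; t, 1-t/x_{n-2}, x_{n-2}, \ldots, 1-x_1]$, after setting $x_{n-1} = 1-u_1$ and re-indexing.

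The main obstacle, and the point that requires genuine care, is verifying the admissibility (intersection with all faces in the correct codimension, or empty) of the pulled-back cycle $\mu^*(\ol{b_n})$, together with checking that the face intersections compute the differential exactly as $L_0 \cdot \on{Li}^{cy}_{n-1}$ modulo the $\Alt$ projector, without extra contributions. Once this is established, the explicit parametrization makes all other checks mechanical.
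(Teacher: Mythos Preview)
Your proposal is correct and follows essentially the same inductive route as the paper: form $b_n = L_0 \cdot \on{Li}^{cy}_{n-1}$, verify it is closed over $X$ and over $\A^1$, note its fiber at $0$ is empty, and produce $\on{Li}^{cy}_n$ as the pull-back by the multiplication map, with the explicit parametrization recovered by reparametrizing the new $\square^1$ factor. One small slip: in your last paragraph the substitution should be $u_1 = 1 - t/x_{n-1}$ (equivalently $x_{n-1} = t/(1-u_1)$), not $x_{n-1} = 1-u_1$.
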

\begin{proof}
For $n=2$, we already defined $\on{Li}^{cy}_2=L_{01}$ satisfying the
expected properties.

Assume now that we have constructed the cycles 
$\on{Li}^{cy}_k$ for $2\leqs k\leqs n$.
As before, let $b$ be the product 
\[
b=L_{0}\cdot \on{Li}^{cy}_{n-1}=
[x; x,  1-\frac{x}{t_{n-2}},
t_{n-2},1-\frac{t_{n-2}}{t_{n-3}}, t_{n-3},  
\ldots, 1-\frac{t_2}{t_{1}}, t_1, 1-t_1].
\]
and $\ol b$ its algebraic closure in $\A^1\times \square^{2n-2}$.

Computing the differential with the Leibniz rule, we obtain $\dN(b)=-L_{0}\cdot L_{0}\cdot
\on{Li}^{cy}_{n-2}=0$, and $b$ gives a class in $\HH^2(\Nc(n))$. 

Using its expression as a parametrized cycle, we compute the differential
of $\ol b$ in $\Nc[\A^1]$
\begin{equation}\label{eqdA1Lin}
\dN_{\A^1}(\ol b)=\sum_{i= 1}^{2n-2}(\dN_{\A^1, i}^0(\ol b)- \dN_{\A^1, i}^{\infty}(\ol b))=0,
\end{equation} 
since many terms are the empty cycle because intersecting with a face $u_i=0,\infty$ on a
factor $\square^1$ leads to a $1$ appearing on another $\square^1$, while the 
remaining terms cancel after applying the projector $\Alt$.

Just as it does in the case of $\on{Li}^{cy}_{2}$, $\ol b$ gives a class in $\HH^2
(\Nc[\A^1](2))=0$ by Corollary \ref{H2A1}, and there exists 
\[
\ol{\on{Li}^{cy}_{n-1}}=\ol c \in \cNg{\A^1}{1}
\]
 such that 
\[
\dN_{\A^1}(\ol c)=\ol b.
\] 
Just as $\ol{\on{Li}^{cy}_{n-1}}|_{x=0}=\emptyset$, we also have $\ol b|_{x=0}=\emptyset$, and the
element $\ol c$ is given by the pull-back by the multiplication 
\[
 \begin{tikzpicture}[baseline={([yshift=0.5em] current bounding box.south)}]
 \matrix (m) [matrix of math nodes,
 row sep=0.6em, column sep=2em, 
text height=1.5ex, text depth=0.25ex] 
 {\A^1 \times \square^1 \times \square^{2n-2} 
& \A^1 \times  \square^{2n-2}  \\
};
\path[->,font=\scriptsize]
(m-1-1) edge node[auto] {$\mu$} (m-1-2);
\end{tikzpicture} \!,
\]
given in coordinates by
\[
\begin{tikzpicture}[baseline={([yshift=0.5em] current bounding box.south)}]
 \matrix (m) [matrix of math nodes,
 row sep=0.6em, column sep=2em, 
text height=1.5ex, text depth=0.25ex] 
 {
{[x ; u_1,u_2,\ldots, u_{2n-1}]}  &
{[\frac{x}{1-u_1} ; u_2, \ldots, u_{2n-1}]}   \\
 };
\path[|->,font=\scriptsize]
(m-1-1) edge node[auto] {$ $} (m-1-2);
 \end{tikzpicture}. 
\]

Reparametrizing the factor $\A^1$ and the first $\square^1$ factor, we write $\ol
c=\mu^*(\ol b)$ explicitly as a parametrized cycle
\[
\ol{\on{Li}^{cy}_{n}}=\ol c=
[x; 1-\frac{x}{t_{n-1}},t_{n-1},1- \frac{t_{n-1}}{t_{n-2}}, t_{n-2},\ldots, 1
-\frac{t_2}{t_1} ,t_1,1-t_1 ] \subset \A^1 \times \square^{2n-1}.
\]

Now, let $\on{Li}^{cy}_{n}$ be the restriction of $\ol c$ to $\cN(n)$, i.e.~the
parametrized cycle
\[
\on{Li}^{cy}_{n}=
[x; 1-\frac{x}{t_{n-1}},t_{n-1},1- \frac{t_{n-1}}{t_{n-2}}, t_{n-2},\ldots, 1
-\frac{t_2}{t_1} ,t_1,1-t_1 ] \subset X \times \square^{2n-1}.
\]

The different properties, $d(\on{Li}^{cy}_n)=L_{0}\cdot \on{Li}^{cy}_{n-1}$,
extension to $\A^1$, $\on{Li}^{cy}_n|_{x=0}=\emptyset$, can now be derived easily
either with the explicit parametric representation or using the properties of
$\ol c$.
\end{proof}
\begin{rem}
In equation \eqref{eqdA1Lin}, the fact that $\dN_{\A^1,1}^0(\ol b)=0$ is related
to the induction hypothesis $\ol{\on{Li}^{cy}_{n-1}}|_{x=0}=\emptyset$; in terms of
cycles, the above part of the differential is given by 
\[
\ol b\cap \left(\A^1\times \{0\}\times \square^{2n-3} \right)=\on{Li}^{cy}_{n-1}|_{x=0}
.
\]

The other terms in the differential are related to the equation satisfied by
$\on{Li}^{cy}_{n-1}$ in $\cN(n-1)$, giving 
\[
\dN(b)=-L_{0}\cdot L_{0}\cdot
\on{Li}^{cy}_{n-2}=0.
\]
Even if $L_0$ is not defined in $\cNg{\A^1}{\bullet}$, the fact that
$\ol{\on{Li}^{cy}_{n-1}}|_{x=0}=0$ ensures that the product really corresponds to an
element in $\cNg{\A^1}{\bullet}$.
\end{rem}
\begin{rem}
\begin{itemize}
\item The expression of $\on{Li}^{cy}_n$ as a parametrized cycle was already
  given fiberwise in \cite{BKMTM} and in 
\cite{GanGonLev05}.
\item Moreover, $\on{Li}^{cy}_n$ corresponds to the function $z \mapsto
  Li_n^{\C}(z)$, as shown in 
\cite{BKMTM}.
\item The construction is given in full detail for more general cycles in
  Section \ref{treetocycle}, and is nothing but a direct application of Theorem
  \ref{thm:cycleLcLcu} to the word $0 \cdots 01$ (with $n-1$ zero). 

The case of cycles $\on{Li}^{cy}_n$ is, however, simple enough to be treated 
separately as the ``good'' case. 
\item It is a general fact that pulling back by the multiplication
preserves the property of having empty fiber at $x=0$, as proved in Proposition
\ref{multequi}. 
\end{itemize}
\end{rem}

\subsection{Admissibility problem at $x=1$ in weight $3$}
\label{subsec:L011pb}
It seems that the first attempt to define algebraic cycles
corresponding not only to polylogarithms but also to multiple polylogarithms
was made by Gangl, Goncharov and Levin in \cite{GanGonLev05}. In their work,
they succeeded in constructing cycles corresponding to the value
$\on{Li}_{n_1,\ldots, n_k}(z_1, \ldots, z_k)$ for fixed parameters $z_i$ in a
number field $F$, with the conditions $z_i \neq 1$ and $z_i\neq z_j$ for $i\neq
j$. However, their cycles are not admissible if one removes the conditions on
the $z_i$.  In this section, we consider the first example where this problem appears.

In the example of the $\on{Li}^{cy}_n$ cycles, we repeatedly
multiply by the cycle $L_0$ :
\[
\dN(\on{Li}^{cy}_{n})=L_0\cdot \on{Li}^{cy}_{n-1}.
\]
 Below we explain the first example where  a multiplication by the cycle $L_1$
 arises and where both geometric and combinatorial difficulties arise.

The cycle $L_{01}$ was defined previously, as was the cycle
$L_{001}=\on{Li}^{cy}_3$, by considering the product
\[
b=L_{0}\cdot L_{01}.
\]

Now we would also like to consider the product
\[
 b=L_{01}\cdot L_{1} \quad \in \cN[2](3),
 \]
given as a parametrized cycle by
\[
b=[x;1-\frac{x}{t_1}, t_1, 1-t_1, 1-x] \quad \subset X\times \square^4.
\]
From this expression, we see that $\dN(b)=0$, since $x\in X$ cannot be equal
to $1$.

Let $\ol b$ be the closure of the defining cycle of $b$ in $\A^1 \times \square^4$, i.e.
\[
\ol b=\left\{(x,1-\frac{x}{t_1}, t_1, 1-t_1, 1-x) \mbox{ such that } x \in \A^1, \,
t_1 \in \p^1 \right\}.
\]
Let $F$ be a face of $\square^4$, and let $u_i$ denote the coordinates on each
factor $\square^1$. Then $u_i \neq 1$. If $F$ is contained in a
hyperplane defined by the equation $u_2=\infty$ or $u_3=\infty$, then, since
$u_1 \neq 1$, we have
\[
\ol b \cap \A^1 \times F =\emptyset.
\]  
Similarly, the intersection of $\ol b$ with a face contained in
$\{u_4=\infty\}$ is empty, because $x \in \A^1$ is different from $\infty$. This remark
reduces the case where $F$ is contained in $\{u_1=\infty\}$ to the case 
where $F$ is contained in
$\{u_2=0\}$, which gives an empty intersection since $u_3 \neq 1$. By symmetry, the
intersection with an $F$ contained in $\{u_3=0\}$ is also empty.

In order to prove that $\ol b$ is admissible and gives an element in
$\cNg{\A^1}{2}$, it remains to check the (co)dimension condition on the three
remaining faces: the face defined by the equation $u_1=0$, the one defined by
the equation $u_4=0$ and the one defined by the equations $u_1=u_4=0$. 
The intersection of $\ol b$ with the face $\{u_1=u_4=0\}$ is empty since
$u_2\neq 1$. The intersection $\ol b$ with the face defined by the equation
by $u_1=0$ or $u_4=0$ is $1$-dimensional, so of codimension $3$ in
$\A^1\times F$.

\begin{rem}
Let $F_4^0$ denote the face of $\square^4$ defined by $u_4=0$. The intersection
of $b$ with  
$X \times F_4^0$ is empty since $x\neq 1$ in $X=\ps$.     
\end{rem}

From the above discussion we obtain a well-defined element 
in $\cNg{\A^1}{2}(3)$, which we again denote by $\ol b$. 
Since the intersection with $u_1=0$ is killed by the projector $\Alt$,
computing the differential in $\cNg{\A^1}{\bullet}$ gives
\[
\dN_{\A^1}(\ol b)=-\ol{L_{01}}|_{x=1}\neq 0
\]
and $\ol b$ does not give a class in $\HH^2(\Nc[\A^1])$.

In order to overcome this obstacle, we introduce the constant 
cycle $L_{01}(1)$ in $\cN(2)$ defined by
\[
\ds L_{01}(1)=[x;1-\frac{1}{t_1}, t_1, 1-t_1] 
\subset X \times \square^3.
\] 
The cycle $L_{01}(1)$ satisfies
\[
\forall a \in X \qquad L_{01}(1)|_{x=a}=L_{01}|_{x=1}.
\]
and extends to a well-defined cycle in $\cNg{\A^1}{1}(2)$.

Instead of considering the product $L_{01}\cdot L_{1}$, we consider
the linear combination of products
\begin{equation}\label{bL011}
b=(L_{01}-L_{01}(1))\cdot L_{1}=L_{01}\cdot L_{1}-L_{01}(1)\cdot L_{1} 
\quad  \in  \cN[2](3).
\end{equation}

As above, we check that $b$ extends to a well-defined element $\ol b$ in
$\cNg{\A^1}{2}(3)$. The correction by $-L_{01}(1)\cdot L_1$ ensures that   
\[ \dN(b)=0, \qquad 
\dN_{\A^1}(\ol b)=0, \qquad
\ol b|_{x=0}=\emptyset.
\]
Considering the previous computation of the pull-back by the multiplication $\mu
: \A^1 \times \square^1 \lra 
\A^1$, we define $L_{011}$ in $\Nc(3)$ as the parametrized cycle
\begin{multline} \label{L011defpa}
L_{011}=[x; 1-\frac{x}{t_2},1-\frac{t_2}{t_1},t_1,1-t_1,1-t_2]\\
+[x; 1-\frac{x}{t_2},1-t_2,1-\frac{1}{t_1},t_1,1-t_1] \qquad \subset X \times \square^5
\end{multline}
Since $x\neq 1$ in $X=\ps$, it is easy to check 
that $L_{011}$ is admissible on $ X
\times \square^5$ and gives a well-defined element in $\Nc(3)$. An explicit
computation also yields
\begin{equation}\label{Labb}
\dN(L_{011})=b= (L_{01}-L_{01}(1))\cdot L_{1}.
\end{equation}
\begin{rem}
However, we point out that
\begin{itemize}
\item The cycle $L_{011}$ is defined as parametrized cycle, not using the
  pull-back by the multiplication, which serves here as a support to ``guess''
  the parametrized expression.
\item The fiber at $x=1$ of the algebraic closure $\ol{L_{011}}$ of $L_{011}$ in
  $\A^1 \times \square^5$ is not admissible.
\item This non-admissibility problem was also encountered by Gangl, Goncharov and
  Levin in  \cite{GanGonLev05}.
\end{itemize}
\end{rem}

In section \ref{treetocycle}, we will explain how to obtain general cycles
admissible at $x=1$. The particular example of a cycle $\Lc_{011}$ related
to $L_{011}$ above will be detailed in the next subsection
\ref{Lcex3}.
\begin{rem} Even if $L_{011}$ is not admissible at $x=1$, we \emph{could} still go on
  looking for ``good'' linear combinations of products in $\cNg{X}{2}$ . In
  particular, in weight   $4$, we \emph{could} consider 
\begin{equation}\label{bLaabb}
b=L_{0}\cdot L_{011}
+L_{001}\cdot L_{1}
-L_{001}(1)\cdot L_{1}
+L_{01}\cdot L_{01}(1)
\end{equation}
and observe that 
\begin{itemize}
\item The terms $L_{0}\cdot L_{011} +L_{001}\cdot L_{1} $ correspond to the
  expression of the cobracket of the Lyndon word $0011$ in the Lie coalgebra
  which is the graded dual to the free Lie algebra $\Lie(X_0,X_1)$ (cf. equation
  \eqref{eq:exmdLieT0011}. This is  explained in
  generality at   Section \ref{subsec:triTdual}.  
\item When computing the differential $\dN(b)$, the term $\dN(L_{01}\cdot
  L_{01}(1))$ gives 
\[
\dN(L_{01}\cdot  L_{01}(1)) = L_0\cdot L_1 \cdot L_{01}(1).
\]
From the differential of $L_0\cdot L_{011}$ arises a term in  $L_0\cdot
L_{01}(1)\cdot L_{1}$. Hence
the differential of $L_{01}\cdot  L_{01}(1)$ cancels with part of the
differential of $L_0\cdot L_{011}$ and can be thought as  ``a propagation'' of the
correction introduced  for $L_{011}$.  
\item The term $-L_{001}(1)\cdot L_{1}$ is similar to the correction
  $-L_{01}(1)\cdot L_{1}$ introduced earlier for $L_{011}$ and ensures that
  $\dN_{\A^1}(\ol b)=0$.
\end{itemize}
The above remarks are the motivation for the introduction of the 
cobracket $\dc$ on the Lie coalgebra dual to the Lie algebra representing the
action of $Lie(X_0,X_1)$ by Ihara's special derivations. In particular it
was the the similarity between equation \eqref{bLaabb} above and equation 
\eqref{eq:dcyTw0011} that led the author to uncover
the relation between Ihara's action and the geometry of the above cycles. 
The goal of section \ref{sec:Combi} is to understand the global combinatorics
underlying the correction terms and to make  their relation to Ihara's
special derivations explicit. 

Note however that these combinatorics can not be applied directly. The reason is
that  the lack of admissibility of the fiber at $x=1$ of  $\ol{L_{011}}$ 
 ``propagates'' to higher weight, leading to closed
subvarieties which are not admissible anymore. In the next section we explain
how to overcome this issue.
\end{rem} 

\subsection{Twisted multiplication and admissibility in weight $3$}
\label{Lcex3}
All the examples of cycles given above are obtained by the general
construction, \emph{with the exception} of $L_{011}$. Namely 
\begin{equation*}
L_0=\Lcz, \quad L_1=\Lco, \quad L_{01}=\Lc_{01}, \quad 
\on{Li}^{cy}_n=L_{W_n}
=\Lc_{W_n},
\end{equation*}
with $W_n=\underbrace{0 \cdots 0}_{n-1 \mbox{ times}}1$.

In the previous section, the cycle $L_{011}$ was constructed by considering the
product 
\[
b=(L_{01}-L_{01}(1))\cdot L_1 \qquad \in \cNg{X}{2}(3).
\]

However, the fiber at $1$ of $\ol{L_{011}}$ is not an admissible
cycle. This comes from the lack of admissibility of the fiber at $1$ of the
cycle $\ol{L_1}$; the closure in $\A^1\times \square^1$ of $L_1$.

When building the cycles $\on{Li}^{cy}_n$, the lack of admissibility of the fiber at
$x=0$ of $\ol{L_0}$ is counterbalanced by an empty fiber at $x=0$ of
$\ol{\on{Li}^{cy}_{n-1}}$. This phenomenon allowed the induction to go through
in the construction of the cycles $\on{Li}^{cy}_n$.

Thus, we want replace  the factor $L_{01}-L_{01}(1)$ in the expression of $b$
by a cycle $\Lcu_{01}$ whose closure $\ol{\Lcu_{01}}$ in $\A^1\times \square^3$
is admissible with an empty fiber at $x=1$. We define a twisted multiplication
map $\nu$ as follows:

\[ \begin{tikzpicture}[baseline={([yshift=0.5em] current bounding box.south)}]
 \matrix (m) [matrix of math nodes,
 row sep=0.6em, column sep=2em, 
text height=1.5ex, text depth=0.25ex] 
 {\A^1 \times \square^1 \times \square^{2} 
& \A^1 \times  \square^{2}  \\
};
\path[->,font=\scriptsize]
(m-1-1) edge node[auto] {$\nu$} (m-1-2);
\end{tikzpicture}\!,
\quad
\begin{tikzpicture}[baseline={([yshift=0.5em] current bounding box.south)}]
 \matrix (m) [matrix of math nodes,
 row sep=0.6em, column sep=2em, 
text height=1.5ex, text depth=0.25ex] 
 {
{[x ; u_1, u_2, u_{3}]}  &
{[\frac{x-u_1}{1-u_1} ; u_2, u_{3}]}   \\
 };
\path[|->,font=\scriptsize]
(m-1-1) edge node[auto] {$$} (m-1-2);
 \end{tikzpicture} \]
which exchanges the role of $0$ and $1$ with respect to the multiplication
$\mu$. As in the case of $L_{01}$ we obtain
\[
\ol{\Lcu_{01}}=\nu^*(\ol{L_0\cdot L_1}) \quad \subset \A^1\times \square^3,
\]
and $\Lcu_{01}$ is defined as its restriction to $X\times \square^3$. The cycle
$\ol{\Lcu_{01}}$ can also be described as a parametrized cycle by 
\[
\ol{\Lcu_{01}}=[x; \frac{t_1-x}{t_1-1} ,t_1,1-t_1 ] \subset \A^1 \times \square^3.
\]
The cycle $\ol{\Lcu_{01}}$ is a well-defined element of $\cNg{\A^1}{1}(2)$ and its fiber
at $x$ is empty. $\Lcu_{01}$ is an element of  $\cNg{X}{1}(2)$ and a direct
computation shows that 
\[
\Lcu_{01}=L_{01}-L_{01}(1)+\dN\left(
[x; s, \frac{
s - \frac{t_1 -x}{t_1}
}{
s -\frac{t_1-1}{t_1} }
, t_1 , 1 - t_1 ]
 \right)
\]

Then the product 
\[
b'=\Lcu_{01}L_1=[x; \frac{t_1-x}{t_1-1} ,t_1,1-t_1;1-x]
\]
can be extended to an admissible cycle $\ol{b'}$ in $\A^1\times
\square^4$ with empty fiber at $x=0$ and zero under $\da[\A^1]$. Its pull-back
by the multiplication gives 
\[
\ol{\Lc_{011}}=\mu^{*}(\ol{b'})=
[x,1-\frac{x}{t_2},\frac{t_1-t_2}{t_1-1}, t_1,1-t_1,1-t_2] \in \cNg{\A^1}{1}(3),
\]
which has an empty fiber at $0$. By proposition \ref{multhomo} (or direct
computation), it satisfies
\[
\da[\A^1](\ol{\Lc_{011}})=\ol{b'}=\ol{\Lcu_{01}L_1}.
\] 
By definition $\Lc_{011}$ is its restriction to $X\times \square^5$ and satisfies
\[
\dN(\Lc_{011})={\Lcu_{01}L_1}.
\]

This example shows that in order to construct a well-defined family of cycles whose
extension to $\A^1$ admits a well-defined fiber at $x=1$ (as elements of
$\cNg{\{1\}}{1}$) and an empty fiber at $0$, we need to simultaneously construct
a family of cycles satisfying the same properties but with $0$ and $1$ exchanged.

\section{Combinatorial settings}\label{sec:Combi}
A \emph{plane} or  \emph{planar} tree is a finite tree whose
internal vertices are of valency $\geqs 3$, and on which a cyclic
ordering is given on the edges coming out of each vertex. All other vertices
are of valency $1$; we call them \emph{external vertices}. 

A \emph{rooted} tree
is a planar tree as above with one distinguished external vertex of valency $1$,
called its root. In particular a rooted tree has at least one edge. The external
vertices which are not the root are called \emph{leaves}.

We will draw trees so that the root vertex is at the top and so that the cyclic
order around the vertices is counterclockwise.

The following combinatorial section is  organized as follows. Subsection
\ref{subsec:LieXY-Lyndon} reviews some properties of the free Lie algebra on
two generators $\Lie(X_0,X_1)$. In particular it presents the basis of Lyndon
brackets and a presentation of the brackets as trivalent trees. Subsection
\ref{subsec:Iharacobr} introduces Ihara's special derivations \cite{IharaAPBG,
  IharaSDABG} and their actions on $\Lie(X_0,X_1)$. Using the tree
representation for $\Lie(X_0,X_1)$,  we keep track of the part of Ihara's bracket
(or Poisson bracket) on  $\Lie(X_0,X_1)$ which comes from the special
derivations. This is done by taking the semi-direct sum of $\Lie(X_0,X_1)$ by
itself.
 
Section \ref{subsec:triTdual} dualizes the above situation. Ihara's coaction is
then written down in terms of the basis dual to the Lyndon brackets. The
structure coefficients of this coaction give us the ``system of differential
equations'' after a change of basis.   

As a last comment to this introduction to this combinatorial part, we observe
that the usages of trivalent tree is not actually necessary. However, 
the presentation with trees is somehow more visual, and more importantly, 
it sheds a new light on the relation between our construction and the 
work of Gangl, Goncharov and Levin \cite{GanGonLev05}.
\subsection{Lyndon words and the free Lie algebra $\Lie(X_0,X_1)$}
\label{subsec:LieXY-Lyndon}
The material developed in this section is detailed in full generality in 
\cite{ReuFLA93,ReuFLAH} and recalls the basic definitions and some properties
of the free Lie algebra on
two generators 
and its relations to trivalent trees and Lyndon words.
\subsubsection{Trees and free Lie algebra}
Recall that a \emph{Lie algebra} over $\Q$ is a $\Q$ vector space $L$, equipped
with a bilinear mapping $[\, ,\,  ] : L \otimes L \lra L$, satisfying the two
following 
properties for any $x,y,z$ in $L$:
\begin{align}
& [x,x]=0 \\
& [[x,y],z]+[[y,z],x]+[[z,x],y]=0. \tag{Jacobi}
\end{align}
\begin{rem} Note that applying the first relation to $[x+y,x+y]$ yields
  the   \emph{antisymmetry} relation
\[
[x,y]=-[y,x].
\]
Thus, we may rewrite the Jacobi identity as 
\[
[[x,y],z]=[x,[y,z]]+[[x,z],y].
\]
\end{rem}
\begin{defn} Given a set $S$, a free Lie algebra on $S$ over $\Q$ is a Lie
  algebra $L$ over $\Q$ together with a mapping $i : S \ra L$ with the following
  \emph{universal property}:

For each Lie algebra $K$ and each mapping $f : S \ra K$, $f$ factors uniquely
through $L$. 
\end{defn}
In what follows, we will only consider $S$ to be a set with two elements, either
$S=\{0,1\}$ or $S=\{X_0,X_1\}$. 

It is usual to consider the free Lie algebra on
$\{X_0,X_1\}$ as a subspace of $\Q<X_0,X_1>$ (its enveloping algebra), the
space of polynomials in two 
non commuting variables $X_0$ and $X_1$. Let $\on{Lie}(X_0,X_1)$ denote this free
Lie algebra.

In order to show the existence of free Lie algebras, a tree representation
is often used.
\begin{defn} Let $\Ttq$ denote the $\Q$ vector space generated by the set $\Ttr$ of
  rooted, planar, trivalent trees with leaves decorated (i.e. labeled) 
by $0$'s and $1$'s.

For two trees $T_1$, $T_2$ in $\Ttr$, define $T_1 \prac T_2$ to be the tree
obtained by joining the root (marked by a circle around the vertex) of $T_1$ and
$T_2$ and adding a new root: 
\[
\roota[1.5]{T_1} \prac \roota[1.5]{T_2}:=\rootaa{T_1}{T_2}
\]
\end{defn}

The internal law $\prac$ is standard in the the study of binary operations and
is usually called grafting. 

$\Ttr$ is isomorphic to the free magma on $\{0,1\}$; a branch $\prac$
in a tree corresponds to a bracketing in a well-formed expression.

The composition law $\prac$ extends by bilinearity to $\Ttq$ making it into a
ring. Let $I_{Jac}$ 
denote the ideal of $\Ttq$ generated by elements of the form 

\[
T \prac T 
\quad \mx{and} \quad
(T_1 \prac T_2)\prac T_3 + (T_2 \prac T_3)\prac T_1 +(T_3 \prac T_1)\prac T_2.
\]
The quotient $\Ttq / I_{Jac}$ is a Lie algebra with bracket $[\, , \,]$ given by
$\prac$; in fact it is the free Lie algebra on $\{0,1\}$. 
Identifying $\{0,1\}$ with $\{X_0,X_1\}$ by the obvious morphism and using the
correspondence $\prac \leftrightarrow [\, , \,]$, we obtain
\begin{lem} The quotient $\TL=\Ttq/ I_{Jac}$ is isomorphic to $\Lie(X_0,X_1)$.
\end{lem}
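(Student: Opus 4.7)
The plan is to establish the isomorphism by verifying the universal property on both sides, using the identification of the set $\Ttr$ of rooted planar trivalent trees with leaves decorated by $\{0,1\}$ as (a basis of the $\Q$-linear span of) the free magma $\mathrm{Mag}(\{0,1\})$, where the operation $\prac$ corresponds to the magma product.

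First I would make precise the bijection $\Ttr \leftrightarrow \mathrm{Mag}(\{0,1\})$. A rooted planar trivalent tree with $n$ leaves corresponds to a fully bracketed non-associative word in the letters $\{0,1\}$: the unique edge at the root leads to an internal vertex with exactly two subtrees (by trivalency), and recursively each subtree corresponds to a bracketed expression; a one-edge tree with a single leaf decorated by $a\in\{0,1\}$ corresponds to the letter $a$. Under this identification, $\prac$ becomes exactly the free magma product. Hence $\Ttq$ is canonically isomorphic, as a $\Q$-algebra with bilinear product $\prac$, to the free non-associative $\Q$-algebra $\Q\{0,1\}$ on the two generators $0,1$.

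Next I would invoke the standard fact (see \cite{ReuFLA93,ReuFLAH}) that the free Lie algebra on a set $S$ over $\Q$ is obtained from the free non-associative $\Q$-algebra on $S$ by quotienting out the two-sided ideal generated by the antisymmetry relations $T\prac T$ and the Jacobi relations
\[
(T_1 \prac T_2)\prac T_3 + (T_2 \prac T_3)\prac T_1 +(T_3 \prac T_1)\prac T_2.
\]
This is exactly $I_{Jac}$, so $\TL = \Ttq / I_{Jac}$ is a free Lie algebra on $\{0,1\}$ with the bracket induced by $\prac$. To conclude, I would apply the universal property: the assignment $0 \mapsto X_0$, $1 \mapsto X_1$ extends uniquely to a Lie algebra morphism $\TL \to \Lie(X_0,X_1)$; conversely the map $X_0 \mapsto [\text{tree }0]$, $X_1 \mapsto [\text{tree }1]$ extends uniquely to a Lie algebra morphism $\Lie(X_0,X_1) \to \TL$. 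Both compositions fix the generators, so by uniqueness in the universal property they are mutual inverses.

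The only subtle point is to justify carefully that $I_{Jac}$, defined here as the ideal generated by only the listed elements, suffices to encode full antisymmetry and full Jacobi on arbitrary elements of $\Ttq$ — that is, that $[x,x]=0$ and the Jacobi identity hold for all $x,y,z$ in the quotient. For antisymmetry this follows by polarizing $(T_1+T_2)\prac(T_1+T_2) \in I_{Jac}$. For Jacobi on arbitrary elements, bilinearity of $\prac$ propagates the relation from the generating trees to all linear combinations. This is the main (though standard) technical verification; everything else reduces to a direct comparison of universal properties and there is no computation beyond unwinding definitions.
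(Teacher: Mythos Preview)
Your proposal is correct and follows essentially the same approach as the paper: the paper identifies $\Ttr$ with the free magma on $\{0,1\}$, states that the quotient by $I_{Jac}$ is a free Lie algebra on $\{0,1\}$ (citing \cite{ReuFLA93,ReuFLAH} for this standard fact), and then concludes by identifying $\{0,1\}$ with $\{X_0,X_1\}$ via $\prac \leftrightarrow [\,,\,]$. You have simply spelled out the universal-property argument in more detail than the paper, which leaves it implicit.
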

For $T$ in $\Ttr$, let $[T]$ denote its image in $\TL$.
%%%%%%%%%%%%%%%%%%%%%%%%%%%%%%%%%%%%%%%%%%%%%%%%%%%%%%%%%%
%%%%%%%%%%%%%%%%%%%%%%%%%%%%%%%%%%%%%%%%%%%%%%%%%%%%%%%%%%%%%%
%%%%%%%%%%%%%%%%%%%%%%%%%%%%%%%%%%%%%%%%%%%%%%%%%%%%%%%%%%%%%%
\subsubsection{Lyndon words}
In this section we will recall a particular basis of the vector space $\TL$, 
the one induced by the Lyndon words.

Let $S$ be the set $\{0,1\}$, and let $S^*$ denote the set of finite words in the
letters $0,1$. Let $<$ be the lexicographic order on $S^*$ with $0<1$. 
\begin{defn}[Lyndon words]
A \emph{Lyndon word} $W$ in $S^*$ is a  nonempty word which is smaller than all 
its non-trivial proper right factors, i.e. $W \neq \emptyset$ and 
\[
W=U V \mbox{ with } U,V \neq \emptyset \quad \Rightarrow \quad W<V.
\]
\end{defn}
Note that $0$ and $1$ are Lyndon words by convention.

\begin{exm} The Lyndon words of length $\leqs 4$ are
\[
0, \, 1 ,\, 01,\, 001,\, 011,\, 0001,\, 0011,\, 0111.
\] 
They are ordered by the lexicographic order, which gives
\[
0<0001<001<0011 <01<011<0111<1.
\] 
\end{exm}

In order to associate a tree to a Lyndon word, we need the following definition.
\begin{defn}[Standard factorization] Let $W$ be a word in $S^*$ of length $\geqs
  2$.  The standard factorization of $W$ is the decomposition 
\[W=UV \mbox{ with } \left\{
\begin{array}{l}U,V \in S^*\sm \emptyset  \\
\mbox{ and } V 
\mbox{ is the smallest
  non-trivial proper right factor of }W.  
\end{array}
\right.
\] 
\end{defn}

To any Lyndon word $W$ we associate a tree $\tw W$ in $\Ttr$. If $W=0$ or $W=1$,we set 
\[
\tw{0}=\roota{0} \qquad \tw 1 = \roota 1.
\]
For a Lyndon word $W$ of length $\geqs 2$, let $W=UV$ be its standard
factorization and set
\[
\tw W=\tw U \prac \tw V.
\]

Let $H_L$ be the set $\{\tw W \}$ where $W$ runs through the Lyndon
  words in the letters $0,1$. 
\begin{rem}\label{orderHL}
The set  $H_L$ is endowed with the total order $<$ induced by
  the ordering of the Lyndon words $W$ given by the lexicographic order on
  $S^*$. 
\end{rem} 
\begin{defn}Let $Lyn$ be the set of the Lyndon words.
For any Lyndon word $W$, let $[\tau_W]$, or simply $[W]$, be the image of $\tw
W$ in $\TL$.  

We say  that $\tw W$ is a 
\emph{Lyndon tree} and that $[W]$ is a \emph{Lyndon bracket}.
\end{defn}

\begin{thm}[{\cite{ReuFLA93}[Theorem 5.1]}]
The family $([W])_{W \in Lyn}=([\tau_W])_{W \in Lyn}=$   forms a basis of $\TL$.  
\end{thm}

\begin{exm}In length $\leqs 3$, the Lyndon trees are given by:
\[
\tw{0}=\roota{0} ,\quad \tw 1 = \roota 1 ,\quad \tw{01}=\TLxy{} ,\quad
\tw{001}=\TLxxy{},\quad 
\tw{011}=\TLxyy{},
\]
and in length $4$ by
\[
\tw{0001}=\TLxxxy{}
,\quad 
\tw{0011}=\TLxxyya{}
,\quad
\tw{0111}=\TLxyyy{}.
\]
\end{exm}

Moreover, a basis of $\TL \w \TL$ is then given by the family $([U]\w [V])$
for $U, V$ Lyndon words such that $U<V$. Writing the Lie bracket  in
this basis yields the structure coefficients of $\TL=\Lie(X_0,X_1)$.

\begin{defn}The \emph{structure coefficients} $\alpha_{U,V}^W$ of
  $\TL=\Lie(X_0,X_1)$ are given for any Lyndon words $W$ and $U<V$ by the
  family of relations
\begin{equation}\label{brac}
[U]\w[V] \st{[\,,\,]}{\longmapsto} [[U],[V]]=\sum_{W \in
  Lyn}\alpha_{U,V}^W [W].
\end{equation}
The $\alpha_{U,V}^W $ are integers
\end{defn}

\subsection{Special derivation and Ihara's cobracket and coaction}
\label{subsec:Iharacobr}
We now review the Ihara bracket \cite{IharaAPBG, IharaSDABG}, denoted
$\{ \, , \, \}$, which provides $\Lie(X_0,X_1)$ with another Lie 
algebra structure.  We also explain how the Ihara bracket is represented 
in terms of trivalent trees.

A derivation of $\Lie(X_0,X_1)$ is a linear endomorphism $D$ of $\Lie(X_0,X_1)$
compatible with the bracket $[ \, , \, ]$ in the following way:
\[
D([f,g])=[D(f),g]+[f,D(g)] \qquad \forall\, f,g \in \Lie(X_0,X_1).
\] 
The commutator of two derivations $D$ and $D'$, given by 
\[
[D,D']_{Der}=D \circ D' -D' \circ D,
\]
places a Lie algebra structure on the set $Der(\Lie(X_0,X_1))$ of derivations.
\begin{defn}[Special derivations]\label{spederi}
For any $f$ in $\Lie(X_0,X_1)$ we define the \emph{special derivation} $D_f$ by
\begin{equation}\label{eq:defDf}
D_f(X_0)=0, \qquad D_f(X_1)=[X_1,f]
\end{equation}
\end{defn}

The map $ \Lie(X_0,X_1) \lra Der(\Lie(X_0,X_1))$ sending $f \mapsto D_f$ is
linear, and its image is a Lie subalgebra of $Der(\Lie(X_0,X_1))$ such that for
any $f$ and $g$ in $\Lie(X_0,X_1)$ we have 
\[
[D_f,D_g]_{Der}=D_h \qquad \mx{with } h=[f,g]+D_f(g)-D_g(f).
\]
\begin{defn}[{Ihara's bracket, \cite{IharaAPBG,
      IharaSDABG}}]\label{def:Iharabracket} The \emph{Ihara's bracket} 
$\{\, ,\,   \}$ on $\Lie(X_0,X_1)$ is defined by
\[
\{f,g\}=[f,g]+D_f(g)-D_g(f).
\]
\end{defn}
\begin{rem}\label{remIhara0and1}
Note that 
%even if Ihara's bracket is defined consistently on $\Lie(X_0,X_1)$, 
the derivation $D_{X_1}$ is identically zero. Moreover $D_{X_0}$ is the adjoint derivation 
\[
D_{X_0}(g)=\on{ad}_{X_0}(g)=[g,X_0];
\]
In particular, $\{X_0,g\}=[D_{X_0},D_g]_{Der}=0$.
\end{rem}
As we just saw, $\Lie(X_0,X_1)$ acts on itself by the non-inner derivations
$D_f$; that is $D_f$ is not an adjoint derivation (with the exception of
$D_{X_0}$ and $D_{X_1}$). The Ihara bracket controls the relation between the
usual bracket, the derivation bracket and the action. However it loses the
description of the action. Using the tree representation for $\Lie(X_0,X_1)$ 
allows us to track of the action. We begin by adding a root decoration to the trivalent
trees. Trees corresponding to an element in the Lie algebra $\Lie(X_0,X_1)$ have
a root decorated by a generic parameter $x$. Trees corresponding to derivation have
a root decorated by $1$ (as reminder that they act on $X_1$).

More formally, let $\Ttg$ (resp. $\Ttg[1]$) denote the $\Q$ vector space
generated by the set $\Ttrg$ (resp. $\Ttrg[1]$) of
  rooted, planar, trivalent trees with leaves decorated (i.e. labeled) 
by $0$'s and $1$'s and a root decorated by $x$ (resp. $1$).

The internal law $\prac$ is defined as above on each set $\Ttrg[x]$ and
$\Ttrg[1]$, i.e.~it joins the two trees and adds a new root redecorated by $x$
and $1$ respectively. It is then extended to the disjoint union 
\[
\{0\}\cup \Ttrg[1] \cup \Ttrg[x] 
\] 
by $0$ whenever the two trees do not have the same root decoration, and extended
by bilinearity to 
\[
\Ttg \oplus \Ttg[1].
\] 
The ideal $I_{Jac}$ is defined in the obvious way, separately on $\Ttg$ and on
$\Ttg[1]$, and we define the Lie algebras:
\[
\TL[x]=\Ttg[x]/I_{Jac}\qquad \mx{and}\qquad
\TL[1]=\Ttg[1]/I_{Jac}
\]
We identify $\TL[x]$ with the Lie algebra $\Lie(X_0,X_1)$.  Again, we will write
$[T(a)]$ for the image in $\TL[a]$ of the tree $T$ in $\Ttrg[a]$ for $a \in \{1, x\}$.

Similarly, a generic element of $\TL$ is denoted by $[F]$, while its image in 
$\TL[x]$ (resp. $\TL[1]$), i.e.~with root decorated by $x$ (resp. $1$), 
is denoted by $[F(x)]$ (resp.$[F(1)]$).

To any element $[F(1)]$  in $\TL[1]$ we associate a
derivation $D_{F(1)}$ on the direct sum 
\[
\TL[1;x]=\TL[1]\oplus \TL[x]
\]
via
\[
D_{F(1)}(\Big[\TLx{1}\Big])=D_{F(1)}(\Big[\TLx{x}\Big])=0, \qquad
D_{F(1)}(\Big[\TLy{1}\Big])= \Big[\TLy{1}\Big] \prac  [F(1)] \]%\\
and
\[
D_{F(1)}(\Big[\TLy{x}\Big])=\Big[\TLy{x}\Big] \prac [ F(x)]
=\Big[\TLy{x}\Big] \prac \Big[
\begin{tikzpicture}[baseline={([yshift=-0.5ex]current bounding box.center)},scale=0.6]
\tikzstyle{every child node}=[minimum size=0pt, inner sep=0pt]%[intvertex]
\node[roots](root) {}
%[deftree]
[level distance=2.0em,sibling distance=3ex]
child {node[mathsc,leaf](1){F} 
}
;%%%
\fill (root.center) circle (1/0.6*\lbullet) ;
\node[mathsc, xshift=-1ex] at (root.west) {x};
%\node[labf] at (1.south){F};
\end{tikzpicture} %
\Big]
=\left[
\begin{tikzpicture}[baseline=(current bounding box.center),scale=1]
\tikzstyle{every child node}=[mathscript mode,minimum size=0pt, inner sep=0pt]
%[intvertex]%[fill,circle,minimum size=3pt, inner sep=0pt]
\node[roots](root) {}
[deftree]
%[mathscript mode]%[fill,circle,minimum size=3pt, inner sep=0pt]
%\node[draw,circle,inner sep=0.5pt](root) 
%{$\bullet$}
%[level distance=1.5em,sibling distance=3ex]
child {node%[intvertex]%[fill, circle, minimum size=2pt,inner sep=0pt]
[fill,circle,minimum size=0.6ex, inner sep=0pt]
           {}[level distance=1.5em]
  child{ node[fill,circle,minimum size=0.6ex, inner sep=0pt](1){}}
  child{ node[leaf]{F}}
};
\fill (root.center) circle (1/1*\lbullet) ;
\node[mathsc, xshift=-1ex] at (root.west) {x};
\node[labf] at (1.south){1};
\end{tikzpicture}\right]
\]
%\end{multline*}
where in the above equation, the tree $F(x)$ has been pictured as $\Big[
\begin{tikzpicture}[baseline={([yshift=-0.5ex]current bounding box.center)},scale=0.6]
\tikzstyle{every child node}=[minimum size=0pt, inner sep=0pt]%[intvertex]
\node[roots](root) {}
%[deftree]
[level distance=2.0em,sibling distance=3ex]
child {node[mathsc,leaf](1){F} 
}
;%%%
\fill (root.center) circle (1/0.6*\lbullet) ;
\node[mathsc, xshift=-1ex] at (root.west) {x};
%\node[labf] at (1.south){F};
\end{tikzpicture} %
\Big]$. The last picture means that the two trees $\TLy{x}$ and $F(x)$ are 
joined at the root and a new root is added with root decoration $x$.  The above operation is linear in $F$.

For an element $[F(x)]$ of $\TL[x]$, we define   the derivation $D_{F(x)}$ on
$\TL[1,x]$  to be $0$. 
With the above definition, $\TL[1]$,
identified with $\Lie(X_0,X_1)$, acts on
$\TL[1]$ and on $\TL[x]$ by special derivations while $\TL[x]$ acts by $0$.

Now, we endow $\TL[1;x]$ with the Lie algebra structure of the semi-direct sum (see
\cite{GOVFLTLTG})  of
$\TL[x]\simeq \Lie(X_0,X_1)$ by $\TL[1]\simeq \Lie(X_0,X_1)$ acting by the above
derivations. More precisely, we define on $\TL[1;x]$ the bilinear map  
\[
\{[F(a)] ,[G(b)] \}=[F(a)]\prac [G(b)] + D_{F(a)}([G(b)])-D_{G(b)}([F(a)])
\]
where $F(a)$ and $G(b)$ denote two generic elements of $\TL[1;x]$, i.e.~$a$
and $b$ lie in $\{1,x\}$.

\begin{lem}The direct sum $\TL[1;x]=\TL[1]\oplus \TL[x]$ endowed with 
$\{\, , \, \}$ is a Lie algebra.
\end{lem}
\begin{proof} The proof takes place in $\TL[1;x]$ and its
  subspaces $\TL[x]$ and $\TL[1]$. Thus we simply write $F(a)$ to denote the
  element $[F(a)]$ in $\TL[1;x]$. It is enough to check that $\{\, , \, \}$
  satisfies Jacobi identity. The definition of 
\[
\{F(a) ,G(b) \}
\]
ensures that $\{\, , \, \}$ is the usual bracket $\prac$ on $\TL[x]$, when 
  $a=b=x$. When
  $a=b=1$, then $\{\, , \, \}$ is the Ihara bracket on $\TL[1]\simeq
  \Lie(X_0,X_1)$. Hence the Jacobi identity holds when the three terms are all in
  $\TL[x]$ or all in $\TL[1]$. When $a=x$ and $b=1$, the bracket $\{F(a),G(b)\}$
  reduces to $-D_{G(1)}(F(x))$.   We have to show that 
\[
\{F(a),\{G(b),H(c)\}\}+\{G(b),\{H(c),F(a)\}\}+\{H(c),\{F(a),G(b)\}\}=0
\]
when two out of the three elements $a,b,c \in \{1,x\}$ are equal and the other
is different. We can assume that $a=b$. When $a=b=x$ the Jacobi identity reduces to
the fact that $H(1)$ act as a derivation on $\{F(x),G(x)\}$. When $a=b=1$,
the Jacobi identity reduces to the definition of the bracket of two derivations.
\end{proof}

Note that a basis of $\TL[1;x]\w \TL[1;x]$ is given by the union of the 
following families:
\begin{align*}
[\tau_U(x)]\w[\tau_V(x)] & \mx{ for any Lyndon word } U<V\\
[\tau_U(x)]\w[\tau_V(1)]  & \mx{ for any Lyndon word } U\neq V\\
[\tau_U(1)]\w[\tau_V(1)] & \mx{ for any Lyndon word } U<V. 
\end{align*}

\begin{defn}\label{def:brac1t}The structure coefficients $\alpha_{U,V}^W$,
  $\beta_{U,V}^W$ and $\gamma_{U,V}^W$ of
  $\TL[1;x]$ are given for any Lyndon words $W$ by the
  family of relations
\begin{equation}
\begin{aligned}\label{brac1t}
\{[\tau_{U}(x)],[\tau_{V}(x)]\}&=\sum_{W \in
  Lyn}\alpha_{U,V}^W [\tau_{W}(x)]. & \mx{for any Lyndon word } U<V\\
\{[\tau_{U}(x)],[\tau_{V}(1)]\}&=\sum_{W \in
  Lyn}\beta_{U,V}^W [\tau_{W}(x)]  & \mx{for any Lyndon word } U\neq V\\
\{[\tau_{U}(1)],[\tau_{V}(1)]\}&=\sum_{W \in
  Lyn}\gamma_{U,V}^W [\tau_{W}(1)] & \mx{for any Lyndon word } U<V. 
\end{aligned}
\end{equation}
Note that the $\alpha_{U,V}^W $ are the $\alpha$'s of equation \eqref{brac}
because $\{\, , \, \}$ restricted to $\TL[x]$ is the usual Lie bracket. All
coefficients above are integers.
\end{defn}
We will need the following property of the above coefficients later,
for our geometric application. 
\begin{lem}\label{relV01} 
Let $W$ be a Lyndon word of length greater than or equal to
  $2$.
Then the following holds for any Lyndon words $U,V$ :
\begin{itemize}
\item $\beta_{0,V}^W=0$,
\item $\beta_{V,0}^W=\alpha_{0,V}^W$
\item $\beta_{U,1}=0$,
\item $\beta_{1,U}^W=\alpha_{U,1}^W$.
\item $\gamma_{U,V}^W=\alpha_{U,V}^W+\beta_{U,V}^W-\beta_{V,U}^W$.
\end{itemize}
In particular, $\beta_{0,0}^W=\beta_{1,1}^W=0$. We also have 
\[
\alpha_{U,V}^{\ve}=\beta_{U,V}^{\ve}=\gamma_{U,V}^{\ve}=0
\]
for $\ve \in\{0,1\}$.
\end{lem}
\begin{proof}
The coefficient $\beta_{0,V}^W$ arises by decomposing the bracket
\[
\{[\tau_{0}(x)],[\tau_{V}(1)] \}=-D_{\tau_V(1)}(\big[
\TLx{x}\big])=0
\]
into the basis. Hence $\beta_{0,V}^W=0$. Similarly, $\beta_{1,U}^W$ arises from 
\[
-D_{\tau_U(1)}(\Big[\TLy{x}\Big])=-\Big[\TLy{x}\Big]\prac [\tau_U(x)]=
[\tau_U(x)] \prac \Big[\TLy{x}\Big]
\]
which shows that  $\beta_{1,U}^W=\alpha_{U,1}^W$.
In the same way, $\beta_{U,1}=0$ arises from $-D_{\tau_1(1)}(\big[
\tau_U(x)\big])=0$ because $D_{\tau_1(1)}=D_{X_1}=0$ after identifying $\TL[1]$
and $\Lie(X_0,X_1)$. The same identification shows that 
\[
-D_{\tau_0}=-ad_{\tau_0} : [F] \longmapsto -[F] \prac [\tau_0]
\]
which proves $\beta_{V,0}^W=\alpha_{0,V}^W$. The relation 
\[
\gamma_{U,V}^W=\alpha_{U,V}^W+\beta_{U,V}^W-\beta_{V,U}^W
\] 
comes from the relation between the commutator of two  derivations, the action
and the usual bracket.  It is given in terms of the Lie algebra
$\Lie(X_0,X_1)$ by: 
\[
\{D_f,D_g\}=D_h \qquad \mx{with } h=[f,g]+D_f(g)-D_g(f)
\]
for any $f,g$ in $\Lie(X_0,X_1)$.
\end{proof}

\subsection{Trivalent trees and duality}\label{subsec:triTdual}
\newcommand{\sca}{1.5}
\newcommand{\scont}{1.9}
\renewcommand{\scont}{1.9}
Here, we dualize equation \eqref{brac1t} by considering the vector space that is
the graded dual of $\TL[1;x]$. The dual $\dc$ of the bracket $\{ \, , \,\}$ coming from the
semi-direct sum is a cobracket, i.e.~essentially a differential where the
relation $d^2=0$ is dual to the Jacobi identity. Hence the coefficients
$\alpha$'s and $\beta$'s from equation \eqref{brac1t} give us a ``differential
system'' (cf. equation \eqref{eq:dcyTw}) which will lead us, after a change of
basis,  to the differential system satisfied by our algebraic cycle.

We begin by making the construction of the graded dual of $\TL[1;x]$
explicit.  
\begin{rem}
It is not actually necessary to give an explicit construction of the dual 
vector space of $\TL[1;x]$, with an explicit basis dual to one above. 
However, this explicit construction
allows us to work with concrete objects. Moreover, we use it to
relate our work to the combinatorial construction of Gangl
Goncharov and Levin in \cite{GanGonLev05}(see Remark
\ref{relationGGL}. 
\end{rem}

The construction of the  vector spaces dual to $\TL[x]$ and $\TL[1]$ are
parallel. Hence $a$ will denote an element of $\{1,x\}$. 

Let $\Tts[a]$ be the quotient of $\Ttg[a]$ by the ideal (for $\prac$) $I_s$ generated by 
\[
T_1\prac T_2 + T_2 \prac T_1.
\] 
 Let $T$ be a tree in
$\Ttrg[a]$ with subtree $T_1 \prac T_2$, and let $T'$ be the tree $T$ in which 
$T_1 \prac T_2$ has been replaced by $T_2 \prac T_1$. The following relation
holds in $\Tts[a]$:
\[
T=-T'.
\]

The total order on $H_L$ (Remark \ref{orderHL}) induces a total order $<$ on
$\Ttrg[a]$. Let $\Bs[a]$ 
be the set of trees $T$ in $\Ttrg[a]$ such that 
\[
 T'=T_1 \prac T_2 \mbox{ is subtree of } T \quad \Rightarrow \quad T_1<T_2.
\]
Writing $T(a) \in \Bs[a]$ also for the image of a  tree $T(a)$ in $\Tts[a]$, we see that
\begin{lem} The set $\Bs[a]$ induces a basis of $\Tts[a]$, also denoted by $\Bs[a]$.
\end{lem}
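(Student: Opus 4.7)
The plan is to prove the lemma in two parts: first that $\Bs$ spans $\Tts$, then that the natural map from the free $\Q$-vector space $V$ on $\Bs$ into $\Tts$ is injective. For spanning, I would argue by induction on the number of internal vertices of a tree $T \in \Ttr$. The base case $T = \roota{0}$ or $\roota{1}$ is immediate. For $T = T_1 \prac T_2$, by induction $T_1$ and $T_2$ reduce in $\Tts$ to $\Q$-linear combinations of elements of $\Bs$, so bilinearity of $\prac$ reduces $T$ to a combination of $S \prac R$ with $S, R \in \Bs$. Each such term either lies in $\Bs$ (when $S < R$), lies in $-\Bs$ via $S \prac R = -R \prac S$ in $\Tts$ (when $S > R$), or vanishes (when $S = R$, since $2 S \prac S = 0$ modulo $I_s$).

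For injectivity, I would construct a retraction $\phi : \Ttr_Q \to V$ by recursion on tree structure: $\phi(\roota{a}) = \roota{a}$, and $\phi(T_1 \prac T_2) = \phi(T_1) \ast \phi(T_2)$, where $\ast$ is the bilinear operation on $V$ determined on basis elements $S, R \in \Bs$ by $S \ast R := S \prac R$ if $S < R$, $S \ast R := -R \prac S$ if $S > R$, and $S \ast R := 0$ if $S = R$. A one-line case check gives $S \ast R + R \ast S = 0$ for all $S, R \in \Bs$, and bilinearity extends this to $x \ast y + y \ast x = 0$ on all of $V$. A straightforward induction on $S$, using $T_1 < T_2$ whenever $T_1 \prac T_2 \in \Bs$, shows that $\phi(S) = S$ for every $S \in \Bs$.

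It remains to verify that $\phi$ descends to $\Tts$. I would prove by induction on the tree context $E[\cdot]$ (a tree with a marked hole) that
\[
\phi\bigl(E[U_1 \prac U_2]\bigr) + \phi\bigl(E[U_2 \prac U_1]\bigr) = 0
\]
for all $U_1, U_2 \in \Ttr$. The base case $E[\cdot] = \cdot$ is the antisymmetry of $\ast$ applied to $\phi(U_1)$ and $\phi(U_2)$; the inductive step, where $E[\cdot] = F[\cdot] \prac W$ or $W \prac F[\cdot]$, uses bilinearity of $\ast$ to pass the $0$ through the surrounding context. Since $I_s$ is generated as a sub-$\Q$-vector space by such contextualised antisymmetry relations, $\phi$ factors as $\bar\phi : \Tts \to V$. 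The composition $V \to \Tts \xrightarrow{\bar\phi} V$ is then the identity by the retraction property $\phi(S) = S$, forcing $V \to \Tts$ to be injective.

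The main obstacle I anticipate is the context-induction showing that $\phi$ annihilates $I_s$: one must pin down the precise meaning of the non-associative ideal $I_s$, check that every element is a $\Q$-linear combination of trees of the form $E[U_1 \prac U_2] + E[U_2 \prac U_1]$, and then propagate the antisymmetry of $\ast$ through arbitrary $\prac$-contexts using only bilinearity (not associativity). Once this bookkeeping is cleanly handled, the rest of the argument is routine.
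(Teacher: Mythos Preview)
Your argument is correct. The paper itself offers no proof of this lemma at all: it is stated as something ``one sees'', i.e.\ treated as an immediate observation following from the fact that the antisymmetry relations $T_1\prac T_2 + T_2\prac T_1$ let one put every subtree into the canonical order $T_1<T_2$, with the remaining trees in $\Bs$ being visibly independent representatives. Your proof makes this precise by building an explicit retraction $\phi$ via the universal property of the free magma, which is a clean and standard way to certify linear independence in a quotient; the context-induction you worry about is exactly the verification that the antisymmetric bilinear operation $\ast$ on $V$ makes $\phi$ a magma-algebra homomorphism, hence killing the two-sided ideal $I_s$. So your approach is sound and more detailed than what the paper provides, but it does not differ from the paper in any substantive mathematical way---it simply supplies the routine verification that the paper omits.
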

From now on we identify $\Tts[a]$ with its dual, via the basis $\Bs[a]$.

Let $I_{Jac,a}^<$ denote the image of the ideal $I_{Jac}$ in $\Tts[a]$. The Lie
algebra $\TL[a]$ is then isomorphic to the quotient $\Tts[a]/I_{Jac,a}^{<}$ and, using the
identification between $\Tts[a]$ and its graded dual (the grading coming from
the number of leaves), we can
identify the graded dual of $\TL[a]$ with a subspace of $ \Tts[a]$.
\begin{defn} Let $\Tcl[a] \subset \Tts[a]$ denote the vector subspace of
  $\Tts[a]$ which is the graded dual of $\TL[a]\simeq \Lie(X_0,X_1)$.

Let $(\T W(a))_{W \in Lyn}$ in $\Tcl[a]$ denote the dual basis of
  the basis $([W])_{W \in Lyn}$ of the free Lie algebra $\TL[a]$.
\end{defn}

The $\T W(a)$ are linear combinations of trees in $\Bs[a]$. Observe that any
Lyndon tree $\tw W(a)$ is in $\Bs[a]$ and that by definition its coefficient in $\T W$
is $1$. 
\begin{exm}\label{exmTw}Up to length $\leqs 3$, $\T W(a)=\tw W(a)$, i.e.
\[
\T{0}(a)=\TLx{a} ,\quad \T{1} = \TLy a ,\quad \T{01}=\TLxy{a} ,\quad
\T{001}=\TLxxy{a},\quad 
\T{011}=\TLxyy{a}.
\]
In length $4$, the first linear combination appears:
\[
\T{0001}=\TLxxxy{a}
,\quad 
\T{0011}=\TLxxyy{a}
,\quad
\T{0111}=\TLxyyy{a}.
\]

\end{exm}
\begin{rem}
Using $\Tts[a]$ instead of $\Ttrg[a]$ to explicitly construct the graded dual of
$\TL[a]$ makes it possible so shrink the size of the linear combinations involved in the
basis dual to $([\tau_W])_{W \in Lyn}$. As an example, the linear combination of
trees corresponding to $\tau_{01}$ in the dual basis working in $\Ttrg[a]$ should be 
\[
\frac 1 2 \left( \TLxy{a} -
\begin{tikzpicture}[baseline=(current bounding box.center),scale=0.6]
\tikzstyle{every child node}=[intvertex]
\node[roots](root) {}
[deftree]
child {node{} 
    child{ node(1){}}
    child{ node(2){}} 
}
;%%%
\fill (root.center) circle (1/0.6*\lbullet) ;
\node[mathsc, xshift=-1ex] at (root.west) {a};
\node[labf] at (1.south){1};
\node[labf] at (2.south){0};
\end{tikzpicture} 
\right)
\] 
\end{rem}

As the Lie bracket on $\TL[a]$ (for the free Lie algebra structure) is induced
by $\prac : \Ttr[a] \w \Ttr[a] \ra \Ttr[a]$; it is also 
induced by $\prac $ on $\Tts[a]$. By duality, one obtains a cobracket
\[
\dL : \Tcl[a] \lra \Tcl[a] \w \Tcl[a]  
\]
dual to the Lie bracket and induced by the map $\Ttq[a] \lra \Ttq[a] \w \Ttq[a]$ also
denoted by $\dL$ : 
\begin{equation}\label{dLieTtq}
\dL : \rootaar[1]{T_1}{T_2}{a}\longmapsto 
\rootar[1.3]{T_1}{a} \w \rootar[1.3]{T_2}{a}.
\end{equation}
The property that $\dL\circ \dL =0$ on $\Tcl[a]$ is dual to the Jacobi identity on
$\TL[a]$.
\begin{exm} As example in weight $4$, we have
\begin{equation*}\label{eq:exmdLieT0011}
\dL(\T{0011}(a))=\T 0(a) \w \T{011}(a) + \T{001}(a) \w \T 1(a).
\end{equation*}
\end{exm}
\begin{prop}\label{dLTw} By duality, the following hold in $\Tcl[a]$:
\begin{itemize}
\item $\T 0(a) = \TLx a$, $\T 1(a) = \TLy a$;
\item $\dL(\T 0(a))=\dL(\T 1(a))=0$;
\item for all Lyndon words $W$ of length $\geqs 2$,  
\begin{equation}\label{eq:dLTw}
\dL(\T W(a))=\sum_{\substack{U<V\\U,V \in Lyn}}\alpha_{U,V}^W\T{U}(a) \w \T{V}(a)
\end{equation}
where the $\alpha_{U,V}^W$ are the structure coefficients of $\TL\simeq
\Lie(X_0,X_1)$ defined by 
equation \eqref{brac}.
\end{itemize} 
Moreover, we can construct the linear combinations $\T W(a)$ inductively by
\begin{equation}\label{eq:indconsTw}
\T W(a)=\sum_{\substack{U<V\\U,V \in Lyn}}\alpha_{U,V}^W\T{U}(a) \prac \T{V}(a)
\end{equation}
for $W$ of length greater than or equal to $2$. Here $\prac$ denotes the
bilinear map $\Tcl[a] \otimes \Tcl[a] \lra \Tcl[a]$  induced by $\prac$.
\end{prop}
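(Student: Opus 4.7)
The plan is to prove the four assertions in sequence. The first two are essentially tautological once one unwinds the identifications, and the main work is for the two formulas.

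I would first observe that the weight-$1$ part of $\TL$ is spanned by $[0]$ and $[1]$, represented in the basis $\Bs$ by the single leaves $\roota 0$ and $\roota 1$. Under the self-pairing of $\Tts$ via $\Bs$, these leaves automatically satisfy $\langle \roota a, [b] \rangle = \delta_{a,b}$ and pair to zero with higher-weight Lyndon brackets for weight reasons, giving $\T 0 = \roota 0$ and $\T 1 = \roota 1$. The identities $\dL(\T 0) = \dL(\T 1) = 0$ are then immediate from \eqref{dLieTtq}: a one-leaf tree has no root-edge pair to split and lies in the kernel of $\dL$ by convention.

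For the cobracket formula \eqref{eq:dLTw}, my approach is pure duality. Since $\dL$ restricted to $\Tcl$ is by construction dual to the Lie bracket, I would compute
\[
\langle \dL(\T W), [V_1] \wedge [V_2] \rangle = \langle \T W, [[V_1],[V_2]] \rangle = \alpha^{W}_{V_1,V_2}
\]
for any pair $V_1 < V_2$ of Lyndon words, using \eqref{brac} and the dual-basis defining property of $\T W$ in the second equality. Expanding $\dL(\T W)$ in the basis $(\T{W_1} \wedge \T{W_2})_{W_1 < W_2}$ of $\Tcl \wedge \Tcl$ dual to $([V_1] \wedge [V_2])_{V_1 < V_2}$ immediately gives the claimed formula.

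The only non-trivial step is the inductive construction \eqref{eq:indconsTw}. Letting $U_W \in \Tts$ denote its right-hand side, I would first note that the definition \eqref{dLieTtq} extends bilinearly to $\dL(A \prac B) = A \wedge B$ for all $A, B \in \Tts$, so the already-established \eqref{eq:dLTw} yields $\dL(U_W) = \dL(\T W)$. To conclude $U_W = \T W$ I would show that $\dL \colon \Tts \to \Tts \wedge \Tts$ is injective in weight $\geqs 2$: every basis tree $T \in \Bs$ of weight $\geqs 2$ admits a unique root decomposition $T = T_L \prac T_R$ with $T_L, T_R \in \Bs$ and $T_L < T_R$, and the resulting $\dL(T) = T_L \wedge T_R$ are distinct basis vectors of $\Tts \wedge \Tts$ as $T$ varies over such basis trees. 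Since $U_W$ and $\T W$ both have weight $|W| \geqs 2$, equality follows.

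The main subtlety I expect is that $U_W$ a priori lies only in $\Tts$, not in $\Tcl$, since the product $\prac$ does not preserve $\Tcl$; one therefore cannot directly invert the cobracket inside $\Tcl$. Working with injectivity of $\dL$ on all of $\Tts$ circumvents this obstruction, and the identity $U_W = \T W$ then shows a posteriori that this specific linear combination does belong to $\Tcl$.
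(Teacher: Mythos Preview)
Your argument is correct. The paper does not supply a proof of this proposition at all: it is stated as an immediate consequence of duality (``By duality, one has\ldots'') and left without further justification, so there is no paper proof to compare against in detail.

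Your treatment of the first three items matches the spirit of the paper's implicit reasoning. For the inductive formula \eqref{eq:indconsTw}, your route via injectivity of $\dL$ on $\Tts$ in weight $\geqs 2$ is a clean way to make the claim precise; the key observation that every $T\in\Bs$ of weight $\geqs 2$ decomposes uniquely as $T_L\prac T_R$ with $T_L,T_R\in\Bs$ and $T_L<T_R$, so that $\dL$ carries the basis $\Bs$ injectively to basis vectors of $\Tts\wedge\Tts$, is exactly what is needed. Your remark that one must work with injectivity on all of $\Tts$ (rather than just $\Tcl$) because $\prac$ does not a priori preserve $\Tcl$ is a genuine subtlety that the paper glosses over entirely.
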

Note that between equation \eqref{brac} and equation \eqref{eq:dLTw} the
summation is ``reversed'' due to the duality; equation \eqref{eq:dLTw} computes
the transpose of the matrix representation of the Lie bracket given in basis by
equation \eqref{brac}. Equation \eqref{eq:indconsTw} provides an inductive
constructions of trees $\T W(a)$. 

Having constructed the graded dual vector spaces $\Tcl[1]$ and $\Tcl[x]$ above, we give the following definition.
\begin{defn}Let $\Tcl[1;x]$ be the graded dual of $\TL[1;x]$; as a vector space it is
  the direct sum 
\[
\Tcl[1;x] = \Tcl[1] \oplus \Tcl[x].
\]
\end{defn}
A basis of $\Tcl[1;x]$ is given by the union of the two families:
\begin{align*}
\T W (x)& \mx{ for any Lyndon word } W \\ 
\T W (1) & \mx{ for any Lyndon word } W. \\
\end{align*}
Similarly a basis of $\Tcl[1;x] \w \Tcl[1;x]$ is given by the union of the
following families:
\begin{align*}
\T U(x)\w\T V(x) & \mx{ for any Lyndon word } U<V\\
\T U(x)\w\T V(1)  & \mx{ for any Lyndon word } U\neq V\\
\T U(1)\w\T V(1) & \mx{ for any Lyndon word } U<V. 
\end{align*}

\begin{prop} \label{dcyTw}The bracket $\{,\}$ on $\TL[1,x]$ induces a cobracket on
  $\Tcl[1;x]$
\[
\dc : \Tcl[1;x] \lra \Tcl[1;x] \w \Tcl[1;x] 
\]
which is written in terms of the above basis as
\begin{equation}\label{eq:dcyTw}
\dc(\T W(x))=\sum_{U<V} \alpha_{U,V}^W\T U(x) \w \T V(x) + 
\sum_{U,V} \beta_{U,V}^W\T U(x) \w \T V (1)
\end{equation}
and 
\begin{equation}\label{eq:dcyTw1}
\dc(\T W(1))=\sum_{U<V} \gamma_{U,V}^W\T U(1) \w \T V(1) 
\end{equation}
where $U$ and $V$ are Lyndon words, the $\alpha_{U,V}^W$, $\beta_{U,V}^W$ and
$\gamma_{U,V}^W$ being those defined in equation \eqref{brac1t}.

In particular $\dc^2=0$. 
\end{prop}
\begin{proof}The proposition follows by duality. In particular equations
  \eqref{eq:dcyTw} and \eqref{eq:dcyTw1} are just the transpose of
  \eqref{brac1t}.
\end{proof}
Let us give some examples of computation of $\dc(\T W(x))$.
\begin{exm}\begin{itemize}
\item For Lyndon words of length $1$ and $2$, we have
\[
\dc(T_{0}(x))=\dc(T_{1}(x))=0, \quad \mx{ and }
\dc(T_{01}(x))=\T 0 (x)\w \T 1(x)+ \T 1(x)\w \T 0(1). 
\]
\item For Lyndon words of weight $3$, we have
\[
\dc(T_{001}(x))=\T 0(x)\w \T{01}(x)+\T{01}(x)\w \T{0}(1)
\]
\begin{equation}\label{eq:dcyTW011}
\dc(T_{011}(x))=\T{01}(x)\w \T{1}(x)+\T{1}(x)\w \T{01}(1)
\end{equation}
\item In weight $4$, we find
 \begin{multline} \label{eq:dcyTw0011}
\dc(\T{0011}(x))=\T{0}(x)\w \T{011}(x)+ \T{011}(x)\w \T{0}(1)\\
+\T{001}(x)\w \T{1}(x)+
\T{1}(x)\w \T{001}(1)+\T{01}\w \T{01}(1)
\end{multline}
\end{itemize}
In particular, equations  \eqref{eq:dcyTW011} and \eqref{eq:dcyTw0011} should be
compared with equations \eqref{Labb} and \eqref{bLaabb}.
\end{exm}
 Note that the only difference between \eqref{eq:dcyTw0011} and \eqref{bLaabb}
 lies in the terms with a 
factor of $\T{0}(1)$. This difference also appears when comparing $\dc(\T{001})$
above to $\dN(L_{001})=\dN(\on{Li}^{cy}_3)$ presented at Lemma
\ref{lem:cycleLi_k}. We could simply kill these terms in the expression $\dc(\T
W)$ by taking the appropriate quotient. However, Lemma \ref{relV01} ensures that
terms of the form $\T W(x) \w \T 0 (1)$ can always be regrouped with a unique
term $\T 0(x) \w \T W(x)$ giving a term in 
\[
(\T 0(x) - \T 0(1))\w \T W(x). 
\]
Hence, terms with a factor $\T 0(1)$ do not really change the combinatorial
situation.

However, as presented in section \ref{subsec:L011pb}, the geometric situation 
does not exactly fit the above combinatorial setting, which needs to be rewritten in a
suitable way. Before doing so, we would like to comment on the relation between
equation \eqref{eq:dcyTw} and the combinatorial approach of Gangl, Goncharov and
Levin in \cite{GanGonLev05}.
\begin{rem}\label{relationGGL} In \cite{GanGonLev05}, the authors  constructed
  parametrized algebraic cycles in $\cNg{\Spec(\Q)}{1}$ from linear 
  combinations of trivalent trees  and a \emph{forest cycling map}. They worked
  in cdga setting where:  
\begin{itemize}
\item the product is induced by the disjoint union of trees (hence the name ``forest'');
\item the graded commutativity is induced by an ordering of the edges of the
  trees and of the forests and an alternating relation;
\item the differential $\dggl$ consists in the appropriate alternating sums of the
  following operation : (a) contracting internal edges of trees and (b)
  contracting and splitting root and external edges as pictured below (figures
  \ref{rootcont} and \ref{leafcont} : 
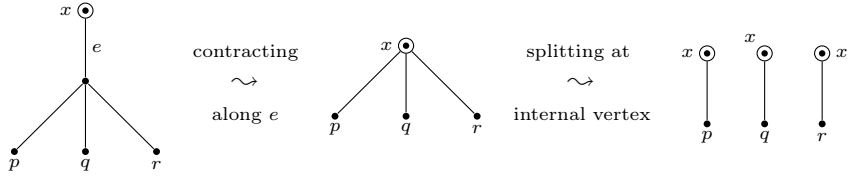
\begin{figure}[!htb]
\begin{tikzpicture}[baseline=(current bounding box.center),scale=\scont]
\tikzstyle{every child node}=[intvertex]%[fill,circle,minimum size=3pt, inner sep=0pt]
\node[roottest=2](root) {}
[deftree]
child {node{}
  child{node(1){}}
  child{node(2){}}
  child{node(3){}}
  edge from parent node[xshift=1ex, mathsc]{e}
};
\node[mathsc, xshift=-1ex] at (root.west) {x};
\node[labf] at (1.south) {p};
\node[labf] at (2.south) {q};
\node[labf] at (3.south) {r};
%\node[labfs] at (1) {0}; 
\fill (root.center) circle (1/\scont*\lbullet) ;
\end{tikzpicture} 
$\begin{array}{c}
\scriptstyle \textrm{contracting}\\ \leadsto \\\scriptstyle   \textrm{along }e\end{array}$
\begin{tikzpicture}[baseline=(current bounding box.center),scale=\scont]
\tikzstyle{every child node}=[intvertex]%[fill,circle,minimum size=3pt, inner sep=0pt]
\node[roottest=2](root) {}
[deftree]
%child {node{}
  child{node(1){}}
  child{node(2){}}
  child{node(3){}} ;
%};
\node[mathsc, xshift=-1ex] at (root.west) {x};
\node[labf] at (1.south) {p};
\node[labf] at (2.south) {q};
\node[labf] at (3.south) {r};
%\node[labfs] at (1) {0}; 
\fill (root.center) circle (1/\scont*\lbullet) ;
\end{tikzpicture} 
$\begin{array}{c}\scriptstyle{\textrm{splitting at }}\\
{\leadsto}\\
\scriptstyle \textrm{internal vertex}\end{array}$
\begin{tikzpicture}[baseline=(current bounding box.center),scale=\scont]
\tikzstyle{every child node}=[intvertex]%[fill,circle,minimum size=3pt, inner sep=0pt]
\node(ancre){};
\node[roottest=2,xshift=-5ex] (root) at (ancre){}
[deftree]
  child{node(1){}
};
%%%
\node[roottest=2,xshift=0ex] (roott) at (ancre){}
[deftree]
  child{node(2){}
};
\node[roottest=2,xshift=5ex] (roottt) at (ancre){}
[deftree]
  child{node(3){}
};
\node[mathsc, xshift=-1ex] at (root.west) {x};
\node[mathsc, yshift=0.8ex, xshift=-0.8ex] at (roott.130) {x};
\node[mathsc, xshift=1ex] at (roottt.east) {x};
\node[labf] at (1.south) {p};
\node[labf] at (2.south) {q};
\node[labf] at (3.south) {r};
%\node[labfs] at (1) {0}; 
\fill (root.center) circle  (1/\scont*\lbullet) ;
\fill (roott.center) circle  (1/\scont*\lbullet) ;
\fill (roottt.center) circle  (1/\scont*\lbullet) ;
\end{tikzpicture}
\caption{Contracting the root}  
\label{rootcont}
\end{figure}
\begin{figure}[!htb]
\begin{tikzpicture}[baseline=(current bounding box.center),scale=\scont]
\tikzstyle{every child node}=[intvertex]%[fill,circle,minimum size=3pt, inner sep=0pt]
\node[roottest=2](root) {}
[deftree]
child {node{}
  child{node(1){}}
  child{node(2){}}
  child{node(3){}
        edge from parent node[xshift=1.5ex, mathsc]{e}}
};
\node[mathsc, xshift=-1ex] at (root.west) {x};
\node[labf] at (1.south) {p};
\node[labf] at (2.south) {q};
\node[labf] at (3.south) {r};
%\node[labfs] at (1) {0}; 
\fill (root.center) circle  (1/\scont*\lbullet) ;
\end{tikzpicture}$\begin{array}{c}\scriptstyle{\textrm{contracting }}\\
{\leadsto}\\
\scriptstyle \textrm{along }e\end{array}$ 
\begin{tikzpicture}[baseline=(current bounding box.center),scale=\scont]
\tikzstyle{every child node}=[intvertex]%[fill,circle,minimum size=3pt, inner sep=0pt]
\node[roottest=2](root) {}
[deftree]
child {node(3){}
  child{node(1){}}
  child{node(2){}}
%  child{node(3){}} ;
};
\node[mathsc, xshift=-1ex] at (root.west) {x};
\node[labf] at (1.south) {p};
\node[labf] at (2.south) {q};
\node[mathsc,xshift=1ex] at (3.east) {r};
%\node[labfs] at (1) {0}; 
\fill (root.center) circle  (1/\scont*\lbullet) ;
\end{tikzpicture} $\begin{array}{c}\scriptstyle{\textrm{splitting at }}\\
{\leadsto}\\
\scriptstyle \textrm{internal vertex}\end{array}$
\begin{tikzpicture}[baseline=(current bounding box.center),scale=\scont]
\tikzstyle{every child node}=[intvertex]%[fill,circle,minimum size=3pt, inner sep=0pt]
\node(ancre){};
\node[roottest=2,xshift=-5ex] (root) at (ancre){}
[deftree]
  child{node(1){}
};
%%%
\node[roottest=2,xshift=5ex] (roott) at (ancre){}
[deftree]
  child{node(2){}
};
\node[roottest=2,yshift=5ex] (roottt) at (ancre){}
[deftree]
  child{node(3){}
};
\node[mathsc, xshift=-1ex] at (root.west) {r};
\node[mathsc, xshift=1ex] at (roott.east) {r};
\node[mathsc, yshift=1ex] at (roottt.north) {x};
\node[labf] at (1.south) {p};
\node[labf] at (2.south) {q};
\node[labf] at (3.south) {r};
%\node[labfs] at (1) {0}; 
\fill (root.center) circle  (1/\scont*\lbullet) ;
\fill (roott.center) circle  (1/\scont*\lbullet) ;
\fill (roottt.center) circle  (1/\scont*\lbullet) ;
\end{tikzpicture}  
\caption{Contracting a leaf} 
\label{leafcont} 
\end{figure}
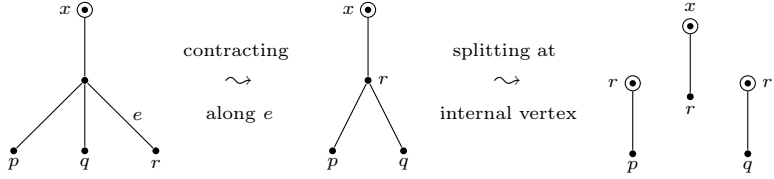
\end{itemize}
Their linear combination of trees differ from ours by their decorations, but also
by their structures. More precisely, with our decoration ($x$ decorates the root
and $0$ and $1$ the leaves) their linear combinations of trees are dual to the
standard basis of the universal enveloping algebra of $\Lie(X_0,X_1)$ presented
as a quotient of $\Ttg[x]$. 

Moreover, the authors of \cite{GanGonLev05} are not very precise about the case
where the leaves or the root are decorated by $0$. 
The \emph{forest cycling map} in loc. cit. 
sends any tree with root decorated by $0$ to the empty cycle. Hence, taking the
quotient by the ideal they generate,  we kill these
trees with root decorated by $0$.  

We can endow the linear combinations of trees $\T W$ with a canonical
ordering of each tree (root edge is the first edge, then down and
left). Then we can compute
$\dggl(\T W(x))$ in the \cite{GanGonLev05} setting and observe that it  satisfies
precisely equation \eqref{eq:dcyTw} after killing trees with $0$ as root decoration:
\begin{equation}\label{eq:ggl:dcyTw}
\dggl(\T W(x))=\sum_{U<V} \alpha_{U,V}^W\T U(x) \cdot \T V(x) + 
\sum_{U,V} \beta_{U,V}^W\T U(x) \cdot \T V (1)
\end{equation}
We will not give a proof of the above claim, which involves long computations on the
decompositions of Lie brackets  into the Lyndon bracket basis. However, let
us explain informally why it is true.
\begin{itemize}
\item Because of the ordering of edges and its alternating relation, working in
  the vector space 
\[
\Tcl[1;x]=\Tcl[1]\oplus \Tcl[x] \subset \Tts[1] \oplus \Tts[x]
\] is possible. The graded commutativity in \cite{GanGonLev05}
corresponds to the exterior product $\Tcl[1;x]\w \Tcl[1;x]$.
\item In computing $\dggl(\T W(x))$, the internal edges do not contribute. To see
  this, proceed by induction and use the fact that $\dL^2=0$.
\item The part of $\dggl$ corresponding to the root edge is nothing but $\dL$,
  hence this part equals 
\[
\sum_{U<V} \alpha_{U,V}^W\T U(x) \cdot \T V(x)
\] 
in $\dggl(\T W(x))$.
\item The part of $\dggl$ corresponding to external edges with $0$ as decoration
  gives a factor with a tree having $0$ as root decoration. Hence this part is
  $0$ because we have killed these trees.
\item The part of $\dggl$ corresponding to an external edge $e$  with $1$ as
  decoration (given by ``contract and split'') is dual to the action by special
  derivations (given by ``attach at $1$'') as drawn below:

%\[
%\begin{figure}[!htb]
%%%%%%%%%%%%%%%%%%%%%
\begin{center}
$\dggl :$ \hspace{1em}
  \begin{tikzpicture}[baseline=(current bounding box.center),scale=1]
 \tikzstyle{every child node}=[intvertex]
 \node[roots](root) {}
 [deftree]
 child {node{}
   child[edgesp=1,dotted] {node[minimum size=0pt, inner sep=0pt](1){}}
   child[edgesp=1.5,dotted] {node{} 
     child[deftree,solid]{ node(2){}}  
     child[deftree,solid]{ node(3){}
                           edge from parent node[xshift=1.5ex, mathsc]{e}} 
   }
}
;
\fill (root.center) circle (1/1*\lbullet) ;
\node[mathsc, xshift=-1ex] at (root.west) {x};
%\node[labf] at (1.south){0};
\node[labf] at (2.south){T};
\node[labf] at (3.south){1};
 \end{tikzpicture}
$\begin{array}{c}\scriptstyle{\textrm{contracting }}\\
{\leadsto}\\
\scriptstyle \textrm{along }e\end{array}$ 
%%%%%%%%%%%%%%%%%%%%%%%%%%%%%%%%%%%%%%%%%%%%%%
%
  \begin{tikzpicture}[baseline=(current bounding box.center),scale=1]
 \tikzstyle{every child node}=[intvertex]
 \node[roots](root) {}
 [deftree]
 child {node{}
   child[edgesp=1,dotted] {node[minimum size=0pt, inner sep=0pt](1){}}
   child[edgesp=1.5,dotted] {node(a){} 
     child[deftree,solid]{ node(2){}}  
     child[deftree,draw=white]{ 
                       node[minimum size=0pt, inner sep=0pt](3){}
                               } 
   }
}
;
\fill (root.center) circle (1/1*\lbullet) ;
\node[mathsc, xshift=-1ex] at (root.west) {x};
%\node[labf] at (1.south){0};
\node[labf] at (2.south){T};
\node[labf,, xshift=1ex] at (a.east){1};
 \end{tikzpicture}
%%%%%%%%%%%%%%%%%%%%%%%%%%%%%%%%%%%%%%%%%%
 $\begin{array}{c}\scriptstyle{\textrm{splitting at }}\\
{\leadsto}\\
\scriptstyle \textrm{internal vertex}\end{array}$
  \begin{tikzpicture}[baseline=(current bounding box.center),scale=1]
 \tikzstyle{every child node}=[intvertex]
 \node[roots](root) {}
 [deftree]
 child {node{}
   child[edgesp=1,dotted] {node[minimum size=0pt, inner sep=0pt](1){}}
   child[edgesp=1.5,dotted] {node(a){} 
     child[deftree,draw=white]{ 
                           node[minimum size=0pt, inner sep=0pt](2){}
                          }  
     child[deftree,draw=white]{ 
                       node[minimum size=0pt, inner sep=0pt](3){}
                               } 
   }
}
;
\fill (root.center) circle (1/1*\lbullet) ;
\node[mathsc, xshift=-1ex] at (root.west) {x};
%\node[labf] at (1.south){0};
%
\node[labf,, xshift=1ex] at (a.east){1};
\node[roots,xshift=-1ex](roott) at (2)  {}
[deftree]
  child{node(b){}
};
\fill (roott.center) circle (1/1*\lbullet) ;
\node[mathsc, xshift=-1ex] at (roott.west) {1};
\node[labf] at (b.south){T};
\end{tikzpicture}
\end{center}
%\caption{Contracting a leaf} 
%\label{leafcont} 
%\end{figure}
%\]
where $T$ denotes a subtree; and 
\begin{center}
$D_{T(1)} :$ \hspace{1em}
  \begin{tikzpicture}[baseline=(current bounding box.center),scale=1]
 \tikzstyle{every child node}=[intvertex]
 \node[roots](root) {}
 [deftree]
 child {node{}
   child[edgesp=1,dotted] {node[minimum size=0pt, inner sep=0pt](1){}}
   child[edgesp=1.5,dotted] {node(a){} 
     child[deftree,draw=white]{ 
                           node[minimum size=0pt, inner sep=0pt](2){}
                          }  
     child[deftree,draw=white]{ 
                       node[minimum size=0pt, inner sep=0pt](3){}
                               } 
   }
}
;
\fill (root.center) circle (1/1*\lbullet) ;
\node[mathsc, xshift=-1ex] at (root.west) {x};
%\node[labf] at (1.south){0};
%
\node[labf,, xshift=1ex] at (a.east){1};
\node[roots,xshift=-1ex](roott) at (2)  {}
[deftree]
  child{node(b){}
};
\fill (roott.center) circle (1/1*\lbullet) ;
\node[mathsc, xshift=-1ex] at (roott.west) {1};
\node[labf] at (b.south){T};
\end{tikzpicture}
$\begin{array}{c}\scriptstyle{\textrm{attaching }}\\
{\leadsto}\\
\scriptstyle T \textrm{ at  }1\end{array}$ 
  \begin{tikzpicture}[baseline=(current bounding box.center),scale=1]
 \tikzstyle{every child node}=[intvertex]
 \node[roots](root) {}
 [deftree]
 child {node{}
   child[edgesp=1,dotted] {node[minimum size=0pt, inner sep=0pt](1){}}
   child[edgesp=1.5,dotted] {node{} 
     child[deftree,solid]{ node(2){}}  
     child[deftree,solid]{ node(3){}
                           edge from parent node[xshift=1.5ex, mathsc]{e}} 
   }
}
;
\fill (root.center) circle (1/1*\lbullet) ;
\node[mathsc, xshift=-1ex] at (root.west) {x};
%\node[labf] at (1.south){0};
\node[labf] at (2.south){T};
\node[labf] at (3.south){1};
 \end{tikzpicture}
\end{center}

\end{itemize}
\end{rem} 
\subsection{A combinatorial statement}\label{combist}
Because of the admissibility issue at $1$ for algebraic cycles explained in
Section \ref{subsec:L011pb}, equation \eqref{eq:dcyTw} cannot be used directly.
The solution found in Section \ref{Lcex3} leads us to express
\eqref{eq:dcyTw} using  products as $\T U(x)\w (\T V(x)-\T V(1))$ rather than
$\T U(x)\w \T V (1)$. 

\begin{defn} For any Lyndon word $W$, let $\Tcu W$ be the difference
\[
\Tcu W=(\T W(x)-\T W(1)).
\]  
In order to use a consistent notation, we set $\Tc W=\T W(x)$, which can be
thought as $\Tc W=\T W(x) -\T W(0)$ where $\T W(0)=0$. 
\end{defn}
Because 
\[
\T W(1) = \Tcu W -\Tc W,
\]
a basis of $\Tcl[1;x]$ is given by the union of the two families:
\begin{align*}
\Tc W & \mx{ for any Lyndon word } W \\ 
\Tcu W & \mx{ for any Lyndon word } W. \\
\end{align*}
Hence a basis of $\Tcl[1;x] \w \Tcl[1;x]$ is given by the union of the
following families:
\begin{align*}
\Tc U\w\Tc V & \mx{ for any Lyndon word } U<V\\
\Tcu U\w \Tc V  & \mx{ for any Lyndon word } U\neq V\\
\Tcu U\w \Tcu V & \mx{ for any Lyndon word } U<V. 
\end{align*}
We can now rewrite equation \eqref{eq:dcyTw} in terms of the above basis. We
also write the cobracket $\dc(\Tcu W)$ in terms of this basis.
\begin{defn} \label{def:aapbbpcoef} Let $W$ be a Lyndon word. We define coefficients
  $a_{U,V}^W$, $\ap_{U,V}^W$ for any Lyndon words $U<V$ and coefficient
  $b_{U,V}^W$ and $\bp_{U,V}$ for any Lyndon words $U,V$ as follows:
\begin{equation}\label{ED-Tc}
\dc(\Tc W)=\sum_{U<V} a_{U,V}^W\Tc U \w \Tc V +
 \sum_{U,V}b_{U,V}^W\Tcu{U}\w \Tc V, 
\tag{ED-$\Tcg^0$}
\end{equation}
and 
\begin{equation}\label{ED-Tcu}
\dc(\Tcu W)=\sum_{U<V} \ap_{U,V}^W\Tcu U \w \Tcu V +
\sum_{U,V}\bp_{U,V}^W\Tcu{U}\w\Tc V
%+%\sum_{V}\ap_{0,V}\Lcz\Lc_V, 
\tag{ED-$\Tcg^1$}
\end{equation}
\end{defn}
Note that no term of the form $\Tcu U \w \Tcu V $ appears in equation \eqref{ED-Tc} because
there is no term of the form  $\T U (1) \w \T V(1)$ appearing in equation
\eqref{eq:dcyTw}. More precisely, we express the $a$'s, $b$'s, $\ap$'s and
$\bp$'s in terms of the $\alpha$'s and $\beta$'s of equation \eqref{eq:dcyTw}.
\begin{lem}\label{aapbbpcoef} 
For any Lyndon words $W$, the following holds
\begin{equation}\label{abcoef}
\begin{array}{ll}
a_{U,V}^W=\alpha_{U,V}^W+\beta_{U,V}^W -\beta_{V,U}^{W} & \mx{ for } U<V\\[0.5em]
b_{U,V}^W=\beta_{V,U}^W& \mx{ for any }U,\, V
\end{array}
\end{equation}
and
\begin{equation}\label{apbpcoef}
\begin{array}{ll}
\ap_{U,V}^W=-a_{U,V}^W&\mx{ for } U<V,\\[0.5em]
\bp_{U,V}^W=a_{U,V}^W+b_{U,V}^W&\mx{ for } U<V,\\[0.5em]
\bp_{V,U}^W=-a_{U,V}^W+b_{V,U}^W&\mx{ for } U<V,\\[0.5em]
%\ap_{0,V}^W=a_{0,V}&\mx{ for any }V, \\[0.5em]
\bp_{U,U}^W=b_{U,U}^W&\mx{ for any } U.
\end{array}
\end{equation}
\end{lem}
Note that coefficients $\ap$'s and $\bp$'s are defined in terms of $a$'s and
$b$'s (from equation \eqref{ED-Tc}), and not in term of coefficients $\alpha$'s
and $\beta$'s (from equation \eqref{eq:dcyTw}). In particular this makes  the proof
of the above Lemma and the comparison between $\dc(\Tc W)$ and $\dc(\Tcu W)$ easier:
\begin{multline}\label{eq:dcyTcTcu}
\dc(\Tc W -\Tcu W)= \\
\sum_{U<V} a_{U,V}^W
\left(\Tc U \w \Tc V +\Tcu U \w \Tcu V + \Tcu V \w \Tc U - \Tcu U \w \Tc V   \right)
\end{multline}
\begin{proof} Beginning with equation \eqref{eq:dcyTw} for a Lyndon word $W$
\[
\dc(\T W(x))=\sum_{U<V} \alpha_{U,V}^W\T U(x) \w \T V(x) + 
\sum_{U,V} \beta_{U,V}^W\T U(x) \w \T V (1),
\]
we write terms of the form  $\T U(x) \w \T V (1)$ as 
\begin{align*}
\T U(x) \w \T V (1)=&\T U(x)\w (-(\T V(x)-\T V(1))+\T V(x)) \\
&=-\Tc U\w \Tcu V +\Tc U \w \Tc V\\
&=\Tcu V \w \Tc U +\Tc U \w \Tc V.\\
\end{align*}
Then equation \eqref{abcoef} follows by reordering the terms of the sum. The
second sum $\sum_{U,V} \beta_{U,V}^W\T U(x) \w \T V (1)$ is, in this reordering,
cut in three pieces corresponding to $U<V$, $U>V$ and $U=V$.  Note that when
$U=V$, the term $\Tc U \w \Tc V$ vanishes leaving only the term 
$\Tcu V \w \Tc U$.  The inversion of letters $U$ and $V$ in the equality 
\[
b_{U,V}^W=\beta_{V,U}^W
\]
is induced by the choice of terms $\Tcu U \w \Tc V$ as ``cross-terms'' of
the basis instead of $\Tc U \w \Tcu V$. This choice is motivated by the position
 of the tree $\Tcu 0$ in the products. The tree $\Tcu 0$  will correspond to the
 algebraic cycle $\Lcz$ which usually arises as the first term of a product.  

In order to obtain equation \eqref{apbpcoef}, we compute
$ \dc(\Tcu W)=\dc(\T W (x))-\dc(\T W(1))$
 as 
\begin{multline}
\dc(\Tcu W)= \sum_{U<V}a_{U,V}^W \Tc U \w \Tc V + \sum_U b_{U,U}^W\Tcu U \w \Tc U\\
+\sum_{U<V}  b_{U,V}^W\Tcu U \w \Tc V + \sum_{V<U}  b_{U,V}^W\Tcu U \w \Tc V\\
-\sum_{U<V}\gamma_{U,V}^W \T U (1) \w \T V(1)
\end{multline}
where $U,V$ are Lyndon words. 
We observe that 
\[
\Tc U \w \Tc V=-\Tcu U \w \Tcu V +\Tcu U \w \Tc V - \Tcu V \w \Tc U + \T U(1) \w
\T V(1).
\]
Substituting this expression for $\Tc U \w \Tc V$ in the above equation for
$\dc(\Tcu W)$, we obtain
\begin{multline*}
\dc(\Tcu W)= \sum_{U<V}-a_{U,V}^W \Tcu U \w \Tcu V +  \sum_{U<V}a_{U,V}^W \Tcu U
\w \Tc V +  \sum_{U<V}-a_{U,V}^W \Tcu V \w \Tc U
\\ \sum_U b_{U,U}^W\Tcu U \w \Tc U+
\sum_{U<V}  b_{U,V}^W\Tcu U \w \Tc V + \sum_{V<U}  b_{U,V}^W\Tcu U \w \Tc V%\\
%-\sum_{U<V}\gamma_{U,V}^W \T U (1) \w \T V(1)
\end{multline*} 
because by Lemma \ref{relV01}, we have
\[
\gamma_{U,V}^W=\alpha_{U,V}^W+\beta_{U,V}^W -\beta_{V,U}^{W} =a_{U,V}^W.
\]
Collecting terms in the last expression of $\dc(\Tcu W)$ yields equation \eqref{apbpcoef}.
\end{proof}

Lemma \ref{relV01} gives us some extra information about the coefficients
$\alpha^{W}_{U,V}$ and $\beta^W_{U,V}$ when $W$, $U$ or $V$ is equal to the
letter $0$ or $1$. This translates for coefficients $a$'s, $b$'s, $\ap$'s and
$\bp$'s as:
\begin{lem}\label{rem:coefAL}
\begin{itemize}
\item If $W$ is the Lyndon word $0$ or $1$, then:
\[
a_{U,V}^0=b_{U,V}^0=\ap_{U,V}^0=\bp_{U,V}^0=0,
\quad
a_{U,V}^1=b_{U,V}^1=\ap_{U,V}^1=\bp_{U,V}^1=0
\]
for any Lyndon words $U$ and $V$.
\item  For any Lyndon word $W$, $U$ and $V$ of length at least $2$, one has
\[
a_{0,V}^W=\ap_{0,V}^W=0 
\quad \mx{ and } \quad
b_{U,0}^W=\bp_{U,0}^W= 0.
\] 
which says that there is no term in $\Tc 0 \Tc V$, $\Tcu 0 \Tcu V$ or $\Tcu U
\Tc 0$. Moreover, we have
\[
a_{U,1}^W=\ap_{U,1}^W=0, 
\quad \mx{ and } \quad  
b_{1,V}^W=\bp_{1,V}^W=0  \]
which says that there is no term in $\Tc U \Tc 1$, $\Tcu U \Tcu 1$ or $\Tcu 1
\Tc V$.
\end{itemize}
We also note that for  $W$ a Lyndon word, we have
\[
a_{U,V}^W=b_{U,V}^W=\ap_{U,V}^W=\bp_{U,V}^W=0
\]
whenever the length of $U$ plus the length of  $V$ is not equal to the length of $W$.

 In particular, equation   \eqref{ED-Tc} and equation \eqref{ED-Tcu} involve
 only Lyndon words of length   smaller than the length of $W$.
\end{lem} 
\begin{proof}
This is a direct consequence of Lemma \ref{relV01} and Lemma \ref{aapbbpcoef},
\end{proof}

The two equations \eqref{ED-Tc} and \eqref{ED-Tcu} provide the combinatorial
situation which will allow the construction of the algebraic cycles. However, we
cannot directly relate the above equation to what happens in the cycle algebra
 $\cNg{\ps}{\bullet}$, %
because
\begin{itemize}
\item the structures are not the same : $\Tcl[1;x]$ is a Lie coalgebra while
  $\cNg{\A^1}{\bullet}$ is a commutative differential graded algebra;
\item we can associated a cycle $\Lc_W$ to $\Tc W$ (resp. $\Lcu_W$  to $\Tcu W$)
  only after that the cycle has been constructed.
\end{itemize}
%%%%%%%%%%%%
Later, we will apply the following Proposition \ref{dALdALu0} to the complex
$\Nge{\ps}{\bullet}$ of equidimensional cycles over $\ps$. The result and the
proof are algebraic and do not involve any geometry. Proposition
\ref{dALdALu0} concludes this combinatorial portion of the paper. It relates  the structure
of equations \eqref{ED-Tc} and \eqref{ED-Tcu} to the ``differential system''
arising in the construction of the algebraic cycles in the next section.

Let $ (\mc A^{\bullet}, \da[\mc A])$ be a cdga and  $p$ an integer $\geqs 2$. We assume the
following:
\begin{itemize}
\item There exist two degree $1$ elements  $\Acz$ and $\Aco$ in $\mc A^1$  such that
\[
\da[\mc A](\Acz)=\da[\mc A](\Aco)=0;
\]
\item For any Lyndon words $W$ of length $k$ with $2 \leqs k \leqs p-1$, there
  exist two degree $1$ elements $\Ac W$ and $\Acu W$ in $\mc A^1$ satisfying
  \eqref{ED-Tc} and  \eqref{ED-Tcu} respectively:
\begin{align*}
\da[\mc A](\Ac W)=\sum_{U<V} a_{U,V}^W\Ac U \w \Ac V +
 \sum_{U,V}b_{U,V}^W\Acu{U}\w \Ac V,\\
\da[\mc A](\Acu W)=\sum_{U<V} \ap_{U,V}^W\Acu U \w \Acu V +
\sum_{U,V}\bp_{U,V}^W\Acu{U}\w\Ac V
\end{align*}
\end{itemize}
 
\begin{prop}\label{dALdALu0} 
Let $W$ be a Lyndon word of length $p$. Let $\RAL$ and $\RALu$ be the degree $2$
elements  defined by
\[
\RAL=\sum_{U<V} a_{U,V}^W\Ac U \Ac V + \sum_{U,V}b_{U,V}^W\Acu{U}\Ac V, 
\]
and  
\[
%\begin{equation*}
\RALu=\sum_{U<V} \ap_{U,V}^W\Acu U \Acu V + \sum_{U,V}\bp_{U,V}^W\Acu {U}\Ac V, 
\]
where the coefficients $a$, $b$ $\ap$, $\bp$ are, as above, those defined by equations
 \eqref{ED-Tc} and  \eqref{ED-Tcu}.  

Then
\[
\da[\mc A](\RAL)=\da[\mc A](\RALu)=0.
\]
\end{prop}
\begin{proof}First we recall that the symmetric algebra $S^g(V^{\bullet})$ over
  a graded vector space $V$ is the tensor algebra modulo the ideal generated by
\[
ab-(-1)^{\deg(a)\deg(b)}ba
\]
whenever $a$ and $b$ are homogeneous. 

Let $S^{\bullet}_{\mc T}=S^g(\Tcl[1;x])$ be a symmetric graded algebra over the vector space
$\Tcl[1;x]$ concentrated purely in degree $1$. We shall use the same notation
for an element in $\Tcl[1;x]$ and its image in $S^{\bullet}_{\mc T}$. The
cobracket $\dc$ and the Leibniz rule induces a differential on $S^{\bullet}_{\mc
  T}$ denote by $\da[cy]$. This makes $S^{\bullet}_{\mc T}$ into a cdga. In
particular  equations
\eqref{ED-Tc} and  \eqref{ED-Tcu} hold in $S^{\bullet}_{\mc T}$ after
replacing $\dc$ by $\da[cy]$ and the wedge product in $\Tcl[1;x] \w \Tcl[1;x]$
by the product in $S^{\bullet}_{\mc T}$.  

Let $S^{\bullet}_{\mc T, \, \leqs p-1}$ be the subalgebra of $S^{\bullet}_{\mc T}$
generated by elements $\Tc U$ and $\Tcu V$ for $U$ and $V$ Lyndon words of
length $k \leqs p-1$. It is a sub-cdga of $S^{\bullet}_{\mc T}$ because
equations \eqref{ED-Tc} and \eqref{ED-Tcu} involve only words of smaller length
on the right-hand side. 

Let $W$ be a Lyndon word of length $p$. Note that the degree $2$ elements
$\da[cy](\Tc W)$ and $\da[cy](\Tcu W)$ of $S^{\bullet}_{\mc T}$ also lie in
$S^{\bullet}_{\mc T, \, \leqs p-1}$:
\[
\da[cy](\Tc W), \, \da[cy](\Tcu W) \in S^{\bullet}_{\mc T, \, \leqs p-1}.
\]
However that they are not boundary in $S^{\bullet}_{\mc T, \, \leqs p-1}$.

We define an algebra morphism $\vp : S^{\bullet}_{\mc T, \, \leqs p-1} \lra \mc
A^{\bullet}$ on the degree one elements by 
\[
\vp(\Tc U)=\Ac U, \qquad \mx{and} \qquad \vp(\Tcu U)=\Acu U
\]
for any Lyndon word $U$ of length $k$ with $2 \leqs k \leqs p-1$ and by
$\vp(\Tcu 0)=\Acz$ and $\vp(\Tc 1)=\Aco$. The morphism $\vp$ is a cdga morphism
due to the assumption on $\da[\mc A](\Ac U)$ and $\da[\mc A](\Acu U)$ for
Lyndon words $U$ of length $\leqs p-1$. Hence $\RAL=\vp(\da[cy](\Tc W))$ and
$\RALu=\vp(\da[cy](\Tcu W))$ satisfy
\[
\da[\mc A](\RAL)=\vp(\da[cy]\circ \da[cy](\Tc W))=0,
\quad \mx{and} \quad  
\da[\mc A](\RALu)=\vp(\da[cy]\circ \da[cy](\Tcu W))=0.
\]

\end{proof}

\newcommand{\lenqt}{0.3}
\newcommand{\arbdot}[1]{\begin{tikzpicture}[%
baseline={([yshift=-0.6ex,xshift=-1ex]current bounding box.center)},scale=\lenqt]
\tikzstyle{every child node}=[mathscript mode,minimum size=0pt, inner sep=0pt]
\node[inner sep=0pt](root) {}
[level distance=\ledge,sibling distance=\sibdis]
%[mathscript mode]%[fill,circle,minimum size=3pt, inner sep=0pt]
%\node[draw,circle,inner sep=0.5pt](root) 
%{$\bullet$}
%[level distance=1.5em,sibling distance=3ex]
child{node%[fill, circle, minimum size=2pt,inner sep=0pt]
           {}[level distance=\ledge]
  child{ node[inner sep=0pt]{}}
  child{ node[fill, circle, minimum size=1.5pt,inner sep=0pt](1){}
[edge from parent path={[solid]
(\tikzparentnode) to (\tikzchildnode)
 }]}
[edge from parent path={[dotted]
(\tikzparentnode) to (\tikzchildnode)
 }]
};
%\fill (root.center) circle (1/\lenqt*\lbullet) ;
\node[mathsc,yshift=0.5em] at (1.east) {#1};
\end{tikzpicture}
}
\newcommand{\insertion}[3][0.5]{%
\begin{tikzpicture}[%
baseline={([yshift=0ex]current bounding box.center)},scale=#2]
\tikzstyle{every child node}=[mathscript mode,minimum size=0pt, inner sep=0pt]
\node[minimum size=0pt, inner sep=0pt]%[draw,circle,inner sep=0.5pt](root) 
{}
[level distance=1.5em,sibling distance=3ex]
child {node[fill, circle, minimum size=2pt,inner sep=0pt]{}[level distance=1.5em]
  child{ node(a){}}
  child{ node(b){}}
};
\node[yshift=-0.5ex,xshift=-0.5ex](a1) at (a){};
\node[yshift=-0.5ex,xshift=0.5ex](b1) at (b){};
\node[xshift=#1ex,yshift=#1ex](c) at (b.east){$\scriptstyle{#3}$};
\draw[<-] (a1.center)--(b1.center) node[below]{};
\end{tikzpicture}
}

\newcommand{\subt}[2]{\left(#2/{\arbdot{#1}}\right)^r}
\newcommand{\inse}[2][0.5]{\insertion[#1]{0.4}{#2}}
\newcommand{\ins}[2][0.7]{\insertion[#1]{0.3}{#2}}
\newcommand{\qtl}[2]{ \vphantom{\left(#2/\arbdot{a}\right)}^l\left(#2/{\arbdot{#1}}\right)
} 
\newcommand{\qts}[3]{ #1\,\sm \hspace{-1ex}\st{#2}{} _{#3}
}     
\newcommand{\Le}[2][]{Le_{#2}^{#1}}
\newcommand{\Lep}[2][]{Le_{pos.}^{#1}(#2)}

\section{Construction of algebraic cycles}\label{treetocycle}

In this section we define two ``differential systems'' for algebraic cycles,
one corresponding to cycles with empty fiber at $x=0$ and another corresponding to
cycles with empty fiber at $x=1$. Then, we show that there exist two families
of cycles in $\Nge{X}{1}$
satisfying these systems induced by two families of cycles in $\Nge{\A^1}{1}$.

\renewcommand{\Lc}{\Lcal^{0}}
\renewcommand{\Lcu}{\Lcal^{1}}
\renewcommand{\AL}{A_{\Lc}}
\renewcommand{\ALu}{A_{\Lcu}}
\renewcommand{\dN}{\partial}
\subsection{Equidimensional cycles}
We recall that the base field is $\Q$ and that all varieties considered below
are $\Q$-varieties. We also recall that $\square^1=\p^1\sm \{1\}$, that
$\square^n=(\square^1)^n$, and that  $X=\ps$.
\begin{defn}[Equidimensionality]\label{def:equi}Let $Y$ be an irreducible smooth variety
\begin{itemize}
\item Let $\Ze[p](Y,n)$ denote the free abelian group
  generated by irreducible closed subvarieties $Z \subset Y\times \square^n$ such
  that for any face
  $F$ of $\square^n$, the intersection $Z\cap (Y\times F) $ is empty or the
  restriction of $p_1 : Y\times \square^n \lra Y$ to  
\[
 Z\cap (Y\times F) \lra Y
\]
is dominant and equidimensional of pure relative dimension  $\dim(F)-p$,i.e.~the non empty fibers have the same required dimension.
\item We say that elements of $\Ze[p](Y,n)$ are \emph{equidimensional over $Y$ with
    respect to any  face} or simply \emph{equidimensional}.
\item Following the definition of $\cNg{Y}{k}(p)$, let $\Nge{Y}{k}(p)$ denote 
\[
\Nge{Y}{k}(p)=\Alt\left(\Ze[p](Y,2p-k )\otimes \Q\right).
\]
\end{itemize}
\end{defn}
\begin{defn} Let $C$ be an element of $\cNg{Y}{\bullet}$ decomposed in terms of cycles as 
\[
C=\sum_{i\in I} q_i Z_i, \qquad q_i \in \Q, 
\]
where $I$ is a finite set and the $Z_i$ are irreducible closed subvarieties
of $Y \times \square^{n_i}$ 
 intersecting all the faces of $\square^{n_i}$ properly (i.e. in codimension
 $p_i$).
\begin{itemize}
\item The support of $C$ is defined as $\Supp(C)=\bigcup_i Z_i$.
\item For $C$ in $\Nge{Y}{k}(p)$, we will say that $C$ has empty fiber at a
  point $y$ in $Y$ if for any $i$ in $I$ the fiber of $Z_i \lra Y$ at $y$ is
  empty.
\end{itemize} 
\end{defn}
\begin{prop}\label{equistr}Let $Y$ be an irreducible smooth variety.
\begin{enumerate}
\item The differential $\da[Y]$ on $\cNg{Y}{\bullet}$ induces a differential:
\[
\Nge{Y}{k}(p) \st{\da[Y]}{\lra}  \Nge{Y}{k+1}(p)
\]
which makes $\Nge{Y}{\bullet}(p)$ into a sub-complex of $\cNg{Y}{\bullet}(p)$. 
\item $\ds \Nge{Y}{\bullet}=\oplus_{p\geqs 0}\Nge{Y}{\bullet}(p)$ is a
  subalgebra (sub-cdga) of $\cNg{Y}{\bullet}$. 
\item Assume that $Z$ or $Z'$ has an empty fiber at a point $y$ in $Y$. Then the
  fiber at $y$ of $\ol{Z}\cdot \ol{Z'}$ is empty. 
\end{enumerate}
\end{prop}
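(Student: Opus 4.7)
The plan is to verify the three properties in the order stated, by unwinding the definitions of the cubical face maps, of the product $\cdot$, and of the support of a cycle, and in each case reducing to elementary properties of equidimensional morphisms (stability under base change and composition).

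For part (1), I would check that for every $i$ and every $\ve\in\{0,\infty\}$, the face pull-back $(s_i^{\ve})^*$ sends $\Ze[p](Y,n)$ into $\Ze[p](Y,n-1)$. The key observation is that if $G$ is a face of $\square^{n-1}$, then $s_i^{\ve}(G)$ is itself a face of $\square^n$. Hence, for $Z\in\Ze[p](Y,n)$, the equidimensionality hypothesis applied to the face $s_i^{\ve}(G)$ gives that $Z\cap (Y\times s_i^{\ve}(G))\to Y$ is equidimensional of relative dimension $\dim(G)-p$; under the identification $s_i^{\ve}(\square^{n-1})\simeq \square^{n-1}$ this is exactly the condition needed. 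Summing the signed face maps and passing through the $\Alt$ projector gives a differential $\partial_Y$ on $\Nge{Y}{\bullet}(p)$, compatible with that of $\cNg{Y}{\bullet}(p)$.

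For part (2), I would unwind the definition of $\cdot$ as concatenation of cubes followed by pull-back along the diagonal $\Delta_Y\colon Y\to Y\times Y$. Given $Z\in\Ze[p](Y,n)$, $Z'\in\Ze[q](Y,m)$ and a face $F=F_1\times F_2$ of $\square^{n+m}$, the external product satisfies
\[
(Z\times Z')\cap (Y\times Y\times F)=(Z\cap (Y\times F_1))\times (Z'\cap(Y\times F_2)),
\]
which maps to $Y\times Y$ equidimensionally of relative dimension $\dim(F_1)+\dim(F_2)-(p+q)=\dim(F)-(p+q)$. Since $Y$ is smooth, $\Delta_Y$ is a regular closed immersion, and equidimensionality of a morphism is stable under arbitrary base change, so the fiber of $(Z\cdot Z')\cap (Y\times F)\to Y$ over each $y\in Y$ coincides with the fiber of $(Z\times Z')\cap (Y\times Y\times F)\to Y\times Y$ over $(y,y)$, and thus has the expected dimension $\dim(F)-(p+q)$. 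Compatibility with the $\Alt$ projectors in weights $p$ and $q$ then yields the subalgebra property for $\Nge{Y}{\bullet}$.

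For part (3), I would observe that by construction
\[
\Supp(\ol Z\cdot \ol{Z'})\subset \Delta_Y^{-1}\bigl(\Supp(\ol Z)\times \Supp(\ol{Z'})\bigr),
\]
so the fiber of $\ol Z\cdot \ol{Z'}$ at $y$ is contained in the product of the fibers of $\ol Z$ and $\ol{Z'}$ at $y$; hence it is empty as soon as either of these two fibers is empty.

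The only non-formal point is the preservation of equidimensionality under the diagonal pull-back in part (2): one has to control not only the dimension of the intersection with $\Delta_Y$ but the dimension of \emph{every} irreducible component of each fiber. This is where the smoothness of $Y$ (hence the local complete intersection nature of $\Delta_Y$) together with the fact that equidimensional morphisms pull back to equidimensional morphisms is essential; once this is granted, parts (1) and (3) are essentially formal.
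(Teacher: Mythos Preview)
Your proposal is correct and follows essentially the same approach as the paper: part (1) via ``a face of a face is a face,'' part (2) via the external product being equidimensional over $Y\times Y$ and then restricting fiberwise along the diagonal, and part (3) via the fiber at $y$ of the diagonal pull-back being contained in $Z_y\times Z'_y$. The only stylistic difference is that the paper checks the diagonal pull-back by computing the dimension of each fiber over a point of $\im(\Delta)$, whereas you invoke stability of equidimensionality under base change; these are the same argument.
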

\begin{proof}
As the generators of $\Ze[p](Y,2p-k)$ are equidimensional over $Y$ when
intersected with any face,  they stay 
equidimensional over $Y$ with respect to any face  when intersected with a
codimension $1$ face because a face intersected with a codimension $1$ face is
either another
face or the intersection is empty. This gives the first point.

Let $Z$ and  $Z'$ be two generators of $\Ze[p](Y, 2p-k)$ and  $\Ze[q](Y,2q-l)$ respectively
for $p$, $q$, $k$ and $l$ integers. By definition, for any face $F \subset
\square^{2p-k}$, either the restriction of the projection $p_1 : Y \times F \lra Y$  to
\[
 p_1 : Z\cap(X \times F) \lra Y 
\]
 is equidimensional of relative dimension $\dim(F)-p$, or the above intersections are empty. Similarly for $Z'$. 

Let $F$ and $F'$ be two faces as above, and assume that none of the 
intersections $ Z\cap(X \times F)$ and $Z'\cap(X \times F')$ is empty. Then
\[
Z \times Z' \cap(Y\times Y \times F \times F') 
\subset Y \times Y \times \square^{2(p+q)-k-l}
\]
is equidimensional over $Y\times Y$ of relative dimension
$\dim(F)+\dim(F')-p-q$.  
For any point $x$ in the image of the 
diagonal $\Delta : Y \lra Y \times Y$, we denote the fiber over $x$ with the
subscript $x$. 
In particular we have
\begin{multline*}
\dim\left(
\big((Z \times Z') \cap(Y\times Y \times F \times F')
\big)_x
\right)=\\
\dim\left(Z \times Z' \cap(\{x\} \times F \times F')\right) = \\
\dim(\{x\})+\dim(F)+\dim(F')-p-q
\end{multline*}
and $Z \times Z' \cap(\im(\Delta) \times F \times F')$ is equidimensional over
$Y$ of relative dimension $\dim(F)+\dim(F')-p-q$ by either of the two projections $Y
\times Y \lra Y$. If either $Z \cap (Y \times F)$ or $Z' \cap (Y \times F')$ is
empty, then the intersection 
\[
Z \times Z' \cap \left(Y\times Y \times F \times F'\right)
\]
is empty and so is $Z \times Z' \cap(\im(\Delta) \times F \times F')$.

From this, we deduce that 
\[
(\Delta\times \id)^{- 1}(Z \times Z')\simeq Z \times Z' \cap\left(\im(\Delta) \times 
\square^{2(p+q)-k-l}\right)
\] 
is equidimensional over $Y$ with respect to any face.  Hence,
\[
Z \cdot Z' = \Alt((\Delta \times \id)^{- 1}(Z \times Z')) \in \cNg{Y}{\bullet}
\]
and the product in $\cNg{Y}{\bullet}$ induces a cdga structure on
$\Nge{Y}{\bullet}$ which makes it into a sub-cdga. 

Moreover, from the above computation, we see that if the fiber of $Z$ is empty
at a point $y$, then, denoting the various fibers at $y$ with the subscript
$y$, we have
\[
\Big(
(\Delta \times \id)^{- 1}(Z \times Z')
\Big)_y=Z \times Z'\cap(\{(y,y)\}\times
\square^{2(p+q)-k-l})=Z_y\times Z'_y=\emptyset. 
\]  
The same holds if $Z'$ is empty at $y$, which gives the last point of the proposition. 
\end{proof}
In order to compare the situation in $\cNg{X}{\bullet}$ and in
$\cNg{\A^1}{\bullet}$, we will use the following proposition.
\begin{prop}\label{equiopen}
Let $Y_0$ be an open dense subset of an irreducible smooth variety $Y$ 
and let $j : Y_0 \lra Y$ denote the inclusion. 
\begin{enumerate}
\item The restriction of cycles from $Y$ to $Y_0$
induces a morphism of cdga's
\[
j^* : \Nge{Y}{\bullet} \lra \Nge{Y_0}{\bullet}.
\]
\item The morphism $j^*$ is injective. 
\item Let $C$ be in $\cNg{Y_0}{\bullet}$ be  decomposed in terms of cycles as 
\[
C=\sum_{i\in I} q_i Z_i, \qquad q_i \in \Q
\] 
where $I$ is a finite set. 
Assume that  for any $i$, the Zariski
closure $\ol{Z_i}$ of $Z_i$ is in $\Ze[p_i](Y,n_i)$. 
Define $\bar C$ as 
\[
\bar C=\sum_{i\in I} q_i \ol{Z_i};
\]
then
\[
\bar C \in \Nge{Y}{\bullet} \qquad \mx{and}\qquad  C=j^*(\bar C)\in \Nge{Y_0}{\bullet}.
\]
\item In particular $\ol{j^*(D)}=D$ for any $D$ in $\Nge{Y}{\bullet}$.
\end{enumerate}
\end{prop}
\begin{proof}
It is enough to prove the first part of the proposition for generators of
$\Nge{Y}{\bullet}$.

Let $Z$  be an irreducible, closed   subvariety of codimension
$p$  of $Y \times \square^{2p-k}$  such that for any face $F$   of $\square^{2p-k}$,  the
intersection 
\[
Z \cap (Y \times F) \qquad \left(\mx{resp. } Z' \times (Y\times F') \right)
\]
is either dominant equidimensional over $Y$ of relative dimension $\dim(F)-p$
or empty.

Let $Z_0$  be the intersections $Z\cap Y_0$. As for
any face $F$ of $\square^{2p-k}$ we have
\[
Z_0\cap(Y_0\times F)=\left(Z\cap(Y\times F)\right)\cap Y_0\times \square^{2p-k},
\]
$Z_0$ is equidimensional with respect to any face over $Y_0$ with
relative dimension $\dim(F)-p$. This also shows that $j^*$
commutes with the differential on $\Nge{Y}{\bullet}$ and on
$\Nge{Y_0}{\bullet}$.

Moreover if $\ol{Z_0}$ denotes the Zariski closure of $Z_0$ in $Y$, we have
$\ol{Z_0}=Z$ because $Z$ is closed and irreducible. This gives part (4) of the
proposition and the injectivity of $j^*$.

Let $Z'$ be  an irreducible, closed   subvariety of codimension
$q$  of $Y \times \square^{2q-l}$ providing a generator of $\Ze[q](Y,2q-l)$. In
order to show that $j^*$ commutes with the product 
structure, it suffices to remark that 
\[
Z_0\times Z_0' =(Z\times Z')\cap \left(Y_0\times Y'_0 \times
  \square^{2(p+p')-k-k'}\right) 
\subset Y \times Y \times \square^{2(p+p')-k-k'}.
\]

Let $C$ and $ \bar C$ be as in the proposition. The fact that $\bar C$ is in
$\Nge{Y}{\bullet}$ follows directly from the definition. To prove that
\[
C=j^*(C') \in \Nge{Y_0}{\bullet},
\]
we can assume that $I$ contains only one element  and that $q_1=1$. Then it
follows from the fact that $Z_1=\ol{Z_1}\cap Y_0 \subset Y$. 

\end{proof}
The main geometric tool of our construction comes from the usual multiplication
on $\A^1$ which induces a homotopy on $\Nge{\A^1}{\bullet}$ between the identity
and the constant cycle given by the fiber at $0$.

Let $m : \A^1 \times \A^1 \lra \A^1$ be the multiplication map sending $(x,y)$
to $x y$, and let $\tau : \square^1=\p^1\sm\{1\}\lra \A^1$ be the isomorphism
sending the affine coordinate $u$ to $\frac{1}{1-u}$. The map $\tau$ sends
$\infty$ to $0$, $0$ to $1$ and extends as a map from $\p^1$ to $\p^1$ sending
$1$ to $\infty$.

The maps $m$ and $\tau$ are in particular flat and equidimensional of relative
dimension $1$ and $0$ respectively.

Consider the following commutative diagram for a positive integer $n$;
\[
\begin{tikzpicture}
\matrix (m) [matrix of math nodes,
row sep=2.5em, column sep=9.5em, 
 text height=1.5ex, text depth=0.25ex]
{\A^1\times \square^1 \times \square^n & \A^1 \times\square^n \\
\A^1\times \square^1 & \A^1 \\
\A^1 & \\};
\path[->,font=\scriptsize]
(m-1-1) edge node[mathsc,auto]{(m\circ(\id_{\A^1}\times \tau))\times \id_{\square^n}} (m-1-2)
(m-1-1) edge node[mathsc,left]{p_{\A^1\times \square^1}} (m-2-1)
(m-1-2) edge node[mathsc,right]{p_{\A^1}} (m-2-2)
(m-2-1) edge node[mathsc,auto]{m\circ(\id_{\A^1}\times \tau)} (m-2-2)
(m-2-1) edge node[mathsc,left]{p_{\A^1}} (m-3-1);
\end{tikzpicture}
\]

\begin{prop}[multiplication and equimensionality]\label{multequi}
In the following statement,  $p$, $k$ and $n$ will denote positive integers
subject to the relation $n=2p-k$.
\begin{itemize}
\item The composition $ \wt{m}=(m \circ (\id_{\A^1}\tau ))\times
  \id_{\square^n}$ induces  a group morphism
\[
\Ze[p](\A^1,n)\st{\wt{m}^*}{\lra} \Ze[p](\A^1\times \square^1,n)
\] 
which extends into a morphism of complexes for any $p$,
\[
\Nge{\A^1}{\bullet}(p)\st{\wt{m}^*}{\lra} \Nge{\A^1\times\square^1}{\bullet}(p).
\]
\item Moreover, there is a natural morphism
\[
h_{\A^1,n}^p : \Ze[p](\A^1\times \square^1,n) \lra  \Ze[p](\A^1,n+1)
\] 
given by regrouping the $\square$'s factors. 
\item The composition $\mu^* = h_{\A^1,n}^p \circ \wt{m}^*$ gives a
  morphism
\[
\mu^* : \Nge{\A^1}{k}(p) \lra \Nge{\A^1}{k-1}(p)
\]
sending equidimensional cycles with  empty fiber at $0$ to equidimensional cycles
with empty fiber at $0$.
\item Let $\theta : \A^1 \lra \A^1$ be the involution sending the natural affine
  coordinate $x$ to $1-x$. Twisting the multiplication $\wt{m}$ by $\theta$ via
\[
\begin{tikzpicture}
\matrix (m) [matrix of math nodes,
row sep=2.5em, column sep=2.5em, 
 text height=1.5ex, text depth=0.25ex]
{\A^1\times \square^1 \times \square^n & \A^1 \times\square^n \\
\A^1\times \square^1 \times \square^n & \A^1 \times\square^n \\};
\path[->,font=\scriptsize]
(m-2-1) edge node[mathsc,auto]{\wt{m}} (m-2-2)
(m-1-1) edge node[mathsc,left]{\theta \times\id _{\square^{n+1}}} (m-2-1)
(m-2-2) edge node[mathsc,right]{\theta \times\id _{\square^{n}}} (m-1-2)
(m-1-1) edge node[mathsc,auto]{} (m-1-2);
\end{tikzpicture}
\]
gives a   morphism
\[
\nu^* : \Nge{\A^1}{k}(p) \lra \Nge{\A^1}{k-1}(p)
\]
sending equidimensional cycles with  empty fiber at $1$ to equidimensional cycles
with empty fiber at $1$.
\end{itemize}
\end{prop}
\begin{proof}
It is enough to work with generators of $\Ze[p](\A^1,n)$. Let $Z$ be an
irreducible subvariety of $\A^1\times \square^n$ such that for any face $F$ of
$\square^n$, the first projection 
\[
p_{\A^1} : Z \cap (\A^1 \times F) \lra \A^1
\]
is dominant and equidimensional of relative dimension $\dim(F)-p$ or empty. Let $F$ be a face
of 
$\square^n$. First we want to show that 
under the projection $\A^1 \times \square^1 \times \square^n \lra \A^1 \times
\square^1$, 
\[
\wt{m}^{-1}(Z)\cap (\A^1\times \square^1 \times F) \lra \A^1 \times
\square^1 \]
 is dominant and equidimensional of relative dimension $\dim(F)-p$ or empty. This follows
 from the  
 fact that $Z \cap (\A^1 \times F)$ is dominant equidimensional over $\A^1$ and $m$ is
 flat and equidimensional of relative 
 dimension $1$ (hence so are $m\circ (\id_{\A^1}\times \tau)$ and $\wt{m}$). 

The map
 $\wt{m}$ is 
 the identity on the $\square^n$ factor, 
 thus for $Z\subset \A^1\times \square^n$ as above and a codimension $1$
 face $F$ of $\square^n$, $\wt{m}^{-1}(Z)$ satisfies
\[
\wt{m}^{-1}(Z)\cap (\A^1 \times \square^1\times F)=\wt{m}^{-1}(Z\cap(\A^1 \times 
F)).
\]
This shows that $\wt{m}^*$ is a morphism of complexes.

Moreover, assuming  that the fiber of $Z$ at $0$ is empty,   the intersection 
\[
\wt{m}^{-1}(Z)\cap (\{0\}\times \square^1 \times \square^n)
\]
is empty because $\wt{m}$ restricted to  
\[
\{0\} \times \square^1 \times \square^n
\]
factors through the inclusion $\{0\}\times \square^n \lra \A^1 \times
\square^n$. Hence the fiber of $\wt{m}^{-1}(Z)$ over $\{0\}\times \square^1$ 
(resp. over $\{0\}$) by $p_{\A^1\times \square^1}$ (resp. $p_{\A^1}\circ
p_{\A^1\times \square^1} $) is empty.

Now, let $Z$ be an irreducible subvariety of $\A^1\times\square^1\times
\square^n$ such that for any face $F$ of $\square^n$
\[
Z\cap (\A^1\times \square^1 \times F) \lra \A^1 \times
\square^1 \]
is dominant and equidimensional of relative dimension $\dim(F)-p$ when the
intersection is not empty. Let $F'$ be a face of 
\[
\square^{n+1}=\square^1\times \square^n.
\]
The face $F'$ is either of the form $\square^1\times F$ or of the form
$\{\ve\}\times F$ with $F$ a face of $\square^n$  and $\ve \in \{0,\infty\}$. We
can assume that $Z\cap (\A^1\times \square^1 \times F)$ is not empty. When
$F'$ is of the first type, we observe  that 
\[
Z\cap (\A^1\times \square^1 \times F) \lra \A^1 \times \square^1
\] is dominant and equidimensional and that $\A^1
\times \square^1 \lra \A^1$  is equidimensional of relative dimension $1$. Hence the
projection 
\[
Z\cap (\A^1\times \square^1 \times F) \lra \A^1
\]
is equidimensional of relative dimension
\[
\dim(F)-p+1=\dim(F')-p.
\]

When $F'$ is of the second type, by symmetry of the role of $0$ and $\infty$, we can
assume that $\ve=0$. Then, the intersection 
\[
Z\cap (\A^1\times \{0\} \times F)
\] 
is nothing but the fiber of $Z\cap (\A^1\times \square^1 \times F)$
over $\A^1\times \{0\}$. Hence, it has pure dimension $\dim(F)-p+1$.

Moreover, denoting the fiber with a subscript, the composition 
\[
Z\cap (\A^1\times \{0\} \times F)=
\left(Z\cap(\A^1\times \square^1 \times F)\right)_{\A^1\times\{0\}} \lra 
\A^1\times \{0\} \lra \A^1
\]
is equidimensional 
of relative dimension  
\[
\dim(F)-p=\dim(F')-p.
\] 
This shows that $h_{\A^1,n}^p$ gives a well defined morphism and that it 
preserves the fiber at a point $x$ in $\A^1$; in particular if $Z$ has an empty
fiber at $0$, so does $h_{\A^1,n}^p (Z)$. 

Finally, the last part of the proposition follows from the fact that
$\theta$ exchanges the roles of $0$ and $1$.
\end{proof}
\begin{rem}\label{emptyinfty} We saw that $\wt{m}$ sends cycles with
  empty fiber at $0$ 
  to cycles with empty fiber at any point in $\{0\}\times \square^1$. Similarly,
  $\wt{m}$ sends cycles with empty fiber at $0$ to cycles that also
  have an empty fiber at any point in $\A^1\times \{\infty\}$.
\end{rem}
From the proof of Proposition 4.2 in \cite{LevBHCG}, we deduce that
$\mu^*$ gives a homotopy between $p_{0}^*\circ i_0^*$ and $ 
\id  $ where $i_0$ is the zero section $\{0\} \ra \A^1$ and $p_0$ the projection
onto the point $\{0\}$.
\begin{prop}\label{multhomo}Let notation be as in Proposition
  \ref{multequi} above. For $\ve=0,1$, let $i_{\ve}$ be  the inclusion of $\ve$
  into $\A^1$
\[
i_0 : \{\ve\} \lra \A^1,
\]  
and let $p_{\ve}$  be the corresponding projections $p_{\ve} : \A^1 \lra
\{\ve\}$.

Then $\mu^*$ provides a homotopy between 
\[
p_{0}^*\circ i_0^* \mx{ and } 
\id : \Nge{\A^1}{\bullet} \lra \Nge{\A^1}{\bullet},
\] 
and similarly $\nu^*$ provides a homotopy between
\[
p_{1}^*\circ i_1^* \mx{ and } 
\id : \Nge{\A^1}{\bullet} \lra \Nge{\A^1}{\bullet}.
\] 
In other words, 
\[
\da[\A^1]\circ \mu^* + \mu^*\circ \da[\A^1] = 
\id - p_{0}^*\circ i_0^* 
\quad \mx{and} \quad 
\da[\A^1]\circ \nu^* + \nu^*\circ \da[\A^1] = 
\id-  p_{1}^*\circ i_1^*
\]
\end{prop}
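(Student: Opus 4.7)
The plan is to compute $\dA \circ \mu^*$ directly on a generator $Z$ of $\Nge{\A^1}{k}(p)$ and identify the result with $\mu^* \circ \dA + (\id - p_0^*\circ i_0^*)$ on the nose, up to the sign conventions built into the alternating projector. Everything reduces to understanding what $\wt{m}$ becomes on the two extreme fibers of the extra cube factor: since $\tau(0)=1$ and $\tau(\infty)=0$, restricting the composition $\wt m=(m\circ(\id_{\A^1}\times\tau))\times\id_{\square^n}$ to $u_{\mathrm{new}}=0$ gives the identity of $\A^1\times\square^n$, while restricting it to $u_{\mathrm{new}}=\infty$ factors as $(p_0,\id)\circ(i_0\times\id)$. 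This is the whole geometric content of the statement.

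First I would recall from Proposition \ref{multequi} that $\mu^*=h_{\A^1,n}^p\circ\wt m^*$ maps $\Nge{\A^1}{k}(p)$ into $\Nge{\A^1}{k-1}(p)$, so $\dA(\mu^*(Z))$ makes sense in $\Nge{\A^1}{k}(p)$. Then I split the differential according to which $\square^1$-factor one restricts to. Writing $u_{\mathrm{new}}$ for the coordinate of the cube factor adjoined by $h_{\A^1,n}^p$ and $u_1,\dots,u_n$ for the original ones, one has
\[
\dA(\mu^*(Z))
\;=\;\bigl(\dA_{1}^{0}-\dA_{1}^{\infty}\bigr)(\mu^*(Z))
\;-\;\sum_{i=1}^{n}(-1)^{i-1}\bigl(\dA_{i+1}^{0}-\dA_{i+1}^{\infty}\bigr)(\mu^*(Z)),
\]
where the overall minus sign in front of the second sum accounts for the reindexing $i\mapsto i+1$ coming from having added $u_{\mathrm{new}}$ as the first coordinate.

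Next I would evaluate the two kinds of face restrictions separately. For the new coordinate, the discussion above yields
\[
\dA_{1}^{0}(\mu^*(Z))= Z,\qquad \dA_{1}^{\infty}(\mu^*(Z))= p_0^*\circ i_0^*(Z),
\]
after passing through the isomorphism $\tau$ and using that $\wt m$ specializes to the identity at $u_{\mathrm{new}}=0$ and to the constant map to $0\in\A^1$ on the $\A^1$-factor at $u_{\mathrm{new}}=\infty$. For the other faces, one has $\wt m\circ s_{i+1}^{\varepsilon}=s_i^{\varepsilon}\circ\wt m$, so
\[
\dA_{i+1}^{\varepsilon}(\mu^*(Z))=\mu^*(\dA_i^{\varepsilon}(Z)),
\]
and summing with the signs $(-1)^{i-1}$ reproduces $\mu^*(\dA(Z))$ exactly. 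Putting the two pieces together gives
\[
\dA(\mu^*(Z))+\mu^*(\dA(Z))\;=\;Z-p_0^*\circ i_0^*(Z),
\]
which is the desired homotopy. A brief check that the alternating projector $\Alt$ commutes with $\mu^*$ (it only permutes the inherited $\square^n$-factors, not the newly-inserted one) ensures the identity descends to $\Nge{\A^1}{\bullet}$.

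The case of $\nu^*$ is formally identical once one observes that conjugation by $\theta\times\id_{\square^{n+1}}$ (resp.\ $\theta\times\id_{\square^n}$) swaps the roles of $0$ and $1$ on the $\A^1$ factor. Thus the restriction at $u_{\mathrm{new}}=0$ is still the identity, but the restriction at $u_{\mathrm{new}}=\infty$ now factors through $i_1\times\id$, yielding $p_1^*\circ i_1^*$ in place of $p_0^*\circ i_0^*$. The main obstacle in the argument is really the bookkeeping: verifying that no stray sign appears from the flatness hypotheses on $\wt m$ and $\tau$, from the reshuffling in $h_{\A^1,n}^p$, and from the fact that only $(-1)^{i-1}$ (not $(-1)^i$) shows up after the index shift. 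Once these signs are pinned down the statement is immediate.
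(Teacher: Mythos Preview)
Your proposal is correct and follows essentially the same argument as the paper: split the differential on $\mu^*(Z)$ into the contribution from the newly added $\square^1$-face (yielding $\id$ and $p_0^*\circ i_0^*$ via $\tau(0)=1$, $\tau(\infty)=0$) and the contributions from the original faces (which reproduce $-\mu^*(\dA(Z))$ since $\wt m$ is the identity on the $\square^n$-factor). The paper organizes this slightly differently by first establishing the identity $\da[\A^1]^{n+1}\circ h_{\A^1,n}^p = i_{0,\square}^* - i_{\infty,\square}^* - h_{\A^1,n-1}^p\circ\da[\A^1\times\square^1]^n$ and then post-composing with $\wt m^*$, but the computation is the same.

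One small imprecision: your parenthetical that $\Alt$ ``only permutes the inherited $\square^n$-factors, not the newly-inserted one'' is not accurate; the alternation on $\Nge{\A^1}{k-1}(p)$ acts on all $n+1$ cube coordinates. This does not affect the argument, however, because the homotopy identity is established at the level of $\Ze[p](\A^1,\bullet)$ and all maps in sight ($\mu^*$, $\da[\A^1]$, $i_\varepsilon^*$, $p_\varepsilon^*$) are compatible with the $\Alt$-projector.
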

The proposition follows from commuting the different compositions
  involved, and from the relation between the differential on $\Nge{\A^1\times
    \square^1}{\bullet}$ and that on $\Nge{\A^1}{\bullet}$ via the map
  $h_{\A^1,n}^p$.

\begin{proof} 
Let $i_{0,\square}$ and $i_{\infty, \square}$ denote the zero section and
the infinity section $\A^1 \lra \A^1 \times \square^1$.
The action of $\theta$ only exchanges the role of $0$ and $1$ in $\A^1$, hence it
is enough to prove the statement for $\mu^*$.
As before, in order to obtain the proposition for $\Nge{\A^1}{k}(p)$, it is
enough to work with the generators of $\Ze[p](\A^1,n)$ with $n=2p-k$.

By the previous proposition \ref{multequi}, $\wt{m}^*$ commutes with the
 differential on $\Ze[p](\A^1,\bullet)$ and on $\Ze[p](\A^1\times \square^1,
 \bullet)$.  
 As the morphism $\mu^*$ is defined by  $\mu^*=h_{\A^1,n}^p\circ \wt{m}^*$, the
 proof relies on computing $\da[\A^1]  \circ h_{\A^1, n}^p$. Let $Z$ be
 a generator of $\Ze[p](\A^1\times \square^1,n)$. In particular,
\[
Z \subset \A^1\times \square^1 \times \square^n
\]  
and $h_{\A^ 1,n}^p(Z)$ is also given by $Z$ but viewed in 
\[
\A^1\times \square^{n+1}.
\] 
The differentials denoted by $\da[\A^1]^{n+1}$  on
$\Ze[p](\A^1,n+1)$ and  $\da[\A^1 \times
\square^1]^n$ on $\Ze[p](\A^1\times \square^1, n)$ 
are both given by intersections with the codimension $1$ faces, but the first
$\square^1$ factor in $\square^{n+1}$ gives  two more faces and introduces a
change of sign. Namely, using an extra subscript to indicate in which cycle
groups the intersections take place, we have
\begin{align*}
\da[\A^1]^{n+1}(h_{\A^1,n}^p(Z))=&
\sum_{i=1}^{n+1} (-1)^{i-1}\left(\dN_{i,\A^1}^0(Z)-\dN_{i,\A^1}^{\infty}(Z)\right) \\ 
=&\dN_{1,A^1}^0(Z)-\dN_{1, \A^1}^{\infty}(Z)
-\sum_{i=2}^{n+1}(-1)^{i-2} \left(\dN_{i,\A^1}^0(Z)-\dN_{i,\A^1}^{\infty}(Z)\right) \\
=&i_{0,\square}^*(Z)-i_{\infty, \square }^*(Z) 
-\sum_{i=1}^{n}(-1)^{i-1} 
\left(\dN_{i+1,\A^1}^0(Z)-\dN_{i+1,\A^1}^{\infty}(Z)\right) \\
=&i_{0,\square}^*(Z)-i_{\infty, \square }^*(Z) \\
&\hspace{3em}~-\sum_{i=1}^{n}(-1)^{i-1} 
\left(h_{\A^1, n-1}^p \circ \dN_{i,\A^1\times \square^1}^0(Z)-
h_{\A^1, n-1}^p \circ \dN_{i,\A^1\times \square^1}^{\infty}(Z)\right) \\
=&i_{0,\square}^*(Z)-i_{\infty, \square }^*(Z) - 
h_{\A^1, n-1}^p\circ\da[\A^1 \times \square^1]^n(Z).
\end{align*} 

Thus, we can compute 
 $
\da[\A^1]\circ \mu^* +\mu^*\circ \da[\A^1] 
$ on $\Ze[p](\A^1, n) $ as
\begin{align*}
\da[\A^1]\circ \mu^*+\mu^*\circ \da[\A^1] =&
\da[\A^1]\circ h_{\A^1,n} \circ \wt{m}^*+
h_{\A^1,n-1} \circ \wt{m}^* \circ \da[\A^1] \\
=&i_{0,\square}^*\circ \wt{m}^*-i_{\infty, \square }^*\circ \wt{m}^*
-h_{\A^1,n-1} \circ\da[\A^1]\circ \wt{m}^* \\
&
\hphantom{i_{0,\square}^*\circ \wt{m}^*-i_{\infty, \square }^*\circ \wt{m}^*}
+h_{\A^1,n-1} \circ \da[\A^1] \circ \wt{m}^* \\
=&i_{0,\square}^*\circ \wt{m}^*-i_{\infty, \square }^*\circ \wt{m}^*.
\end{align*}

The morphism $i_{\infty,\square}^*\circ \wt{m}^*$ is induced by
\[
\begin{tikzpicture}
\matrix (m) [matrix of math nodes,
 row sep=0.6em, column sep=2.5em, 
 text height=1.5ex, text depth=0.25ex] 
{\A^1 & \A^1 \times \square^1 & \A^1 \times \A^1 & \A^1 \\
x & (x, \infty) & (x, 0) & 0 \\
};
\path[->]
(m-1-1) edge node[mathsc,auto] {i_{\infty,\square}} (m-1-2)
(m-1-2) edge node[mathsc,auto] {\tau} (m-1-3)
(m-1-3) edge node[mathsc,auto] {m} (m-1-4);
\path[|->]
(m-2-1) edge node[mathsc,auto] {} (m-2-2)
(m-2-2) edge node[mathsc,auto] {} (m-2-3)
(m-2-3) edge node[mathsc,auto] {} (m-2-4);
\end{tikzpicture}
\]
which factors through
\[
\begin{tikzpicture}
\matrix (m) [matrix of math nodes,
 row sep=1.5em, column sep=2.5em, 
 text height=1.5ex, text depth=0.25ex] 
{\A^1 & \A^1 \times \square^1 & \A^1 \times \A^1 & \A^1 \\
\A^1 &  &  & \A^1 \\
};
\path[->]
(m-1-1) edge node[mathsc,auto] {i_{\infty,\square}} (m-1-2)
(m-1-2) edge node[mathsc,auto] {\tau} (m-1-3)
(m-1-3) edge node[mathsc,auto] {m} (m-1-4)
(m-1-1) edge node[mathsc,auto] {p_0} (m-2-1)
(m-2-1) edge node[mathsc,auto] {i_0} (m-2-4)
(m-1-4) edge node[mathsc,auto] {\id_{\A^1}} (m-2-4);
\end{tikzpicture}
\]
Thus, 
\[
i_{\infty,\square}^*\circ \wt{m}^*=(i_0\circ p_0)^*=p_0^*\circ i_0^*.
\]
 Similarly  $i_{0, \square }^*\circ \wt{m}^*$ is induced by 
\[
\begin{tikzpicture}
\matrix (m) [matrix of math nodes,
 row sep=0.6em, column sep=2.5em, 
 text height=1.5ex, text depth=0.25ex] 
{\A^1 & \A^1 \times \square^1 & \A^1 \times \A^1 & \A^1 \\
x & (x, 0) & (x, 1) & x \\
};
\path[->]
(m-1-1) edge node[mathsc,auto] {i_{\infty,\square}} (m-1-2)
(m-1-2) edge node[mathsc,auto] {\tau} (m-1-3)
(m-1-3) edge node[mathsc,auto] {m} (m-1-4);
\path[|->]
(m-2-1) edge node[mathsc,auto] {} (m-2-2)
(m-2-2) edge node[mathsc,auto] {} (m-2-3)
(m-2-3) edge node[mathsc,auto] {} (m-2-4);
\end{tikzpicture}
\]
which factors through $\id_{\A^1} : \A^1 \lra \A^1$, and we have 
\[
i_{0, \square }^*\circ \wt{m}^*=\id.
\]
This concludes the proof of the proposition.
\end{proof}

\subsection{Cycles over $X=\ps$ corresponding to multiple polylogarithms}

Set $\Lcz=L_{0}$ and $\Lco=L_{1}$, where
$L_0$ and $L_1$ are the cycles in $\cNg{X}{1}(1)$ 
defined in Section \ref{subs:poly} induced by the graph of $x \mapsto x$ and $x
\mapsto 1-x$ from $X \lra \p^1$. Note that the superscript $1$ in $\Lcz$ refers
to the fact that this cycle has an empty fiber at $1$.

Consider the two following differential systems

\begin{equation}\label{ED-Lc}
\dN(\Lc_W)=\sum_{U<V} a_{U,V}^W\Lc_U \Lc_V + \sum_{U,V}b_{U,V}^W\Lcu_{U}\Lc_V
\tag{ED-$\Lc$}
\end{equation}
and 
\begin{equation}\label{ED-Lcu}
\dN(\Lcu_W)=\sum_{U<V} \ap_{U,V}^W\Lcu_U \Lcu_V +
\sum_{U,V}\bp_{U,V}^W\Lcu_{U}\Lc_V%+
%\sum_{V}\ap_{0,V}\Lcz\Lc_V 
\tag{ED-$\Lcu$}
\end{equation}
where the coefficients $a_{U,V}^W$, $b_{U,V}^W$, $\ap_{U,V}^W$ and $\bp_{U,V}^W$ are
those defined in Definition \ref{def:aapbbpcoef}.  

These differential equations are exactly the differential system considered in
section \ref{combist}.

\begin{thm}\label{thm:cycleLcLcu} Let $j$ be the inclusion $X \hookrightarrow
  \A^1$. For any Lyndon word $W$ of length $p\geqs 2$, there exist
  two cycles $\Lc_W$ and $\Lcu_W$ in $\cN (p)$ such that:
\begin{itemize}
\item $\Lc_W$, $\Lcu_W$ are elements of $\Nge{X}{1}(p)$.
\item There exist cycles $\ol{\Lc_W}$, $\ol{\Lcu_W}$ in $\Nge{\A^1} 1 (p)$ such
  that
\[
\Lc_W=j^*(\ol{\Lc_W}) \qquad \mx{and} \qquad \Lcu_W=j^*(\ol{\Lcu_W}). 
\]
\item The restriction of $\ol{\Lc_W}$ (resp. $\ol{\Lcu}$) to the fiber $x=0$ (resp. $x=1$)
  is empty. 
\item The cycle $\Lc_W$ (resp. $\Lcu_W$) satisfies the equation \eqref{ED-Lc}
  (resp \eqref{ED-Lcu}) in $\Nc$ and the same holds for its extension
  $\ol{\Lc_W}$ (resp. $\ol{\Lcu_W}$) to $\Nge{\A^1}{\bullet}$.
\end{itemize}  
\end{thm}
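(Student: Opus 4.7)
The plan is to induct on the length $p$ of $W$, following the strategy outlined in Subsection \ref{ssubs:strategy} which already succeeded for the polylogarithm cycles in Lemma \ref{lem:cycleLi_k}. For the base case $p=2$, the only Lyndon word is $W=01$; the computation $\dc(\T{01})=\T 0\cdot \T 1$ combined with Definition \ref{aapbbpcoef} reduces both $\AL$ and $\ALu$ to the single product $L_0\cdot L_1$, and one sets $\Lc_{01}:=j^*\mu^*(\ol{L_0\cdot L_1})$ and $\Lcu_{01}:=j^*\nu^*(\ol{L_0\cdot L_1})$; the four properties of the theorem then follow exactly as in Lemma \ref{lem:cycleLi_k} from Propositions \ref{equistr}, \ref{multequi} and \ref{multhomo}.

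For the inductive step, fix $W$ Lyndon of length $p\geqs 3$ and assume the theorem for all Lyndon words of length strictly less than $p$. By Remark \ref{rem:coefAL}, the expressions
\[
\ol{\AL}\;=\;\sum_{U<V}a_{U,V}^W\,\ol{\Lc_U}\cdot\ol{\Lc_V}\;+\;\sum_{U,V}b_{U,V}^W\,\ol{\Lc_U}\cdot\ol{\Lcu_V}
\]
and the analogous $\ol{\ALu}$ involve only Lyndon words of length $<p$, so the inductive hypothesis provides all needed cycles together with their $\A^1$-extensions. Proposition \ref{equistr}(2)--(3), combined with the empty fiber at $0$ of each $\ol{\Lc_U}$ and the observation that $\ol{L_0}$ has empty fiber at $1$ (its fiber there being the point $(1,1)$ outside $\square^1=\p^1\sm\{1\}$), ensures that $\ol{\AL}\in\Nge{\A^1}{2}(p)$ has empty fiber at $t=0$ and that $\ol{\ALu}\in\Nge{\A^1}{2}(p)$ has empty fiber at $t=1$. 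Proposition \ref{dALdALu0} then yields $\dN_{\A^1}(\ol{\AL})=\dN_{\A^1}(\ol{\ALu})=0$.

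One now sets $\ol{\Lc_W}:=\mu^*(\ol{\AL})$ and $\ol{\Lcu_W}:=\nu^*(\ol{\ALu})$, and $\Lc_W:=j^*(\ol{\Lc_W})$, $\Lcu_W:=j^*(\ol{\Lcu_W})$. Proposition \ref{multequi} places these cycles in $\Nge{\A^1}{1}(p)$ and preserves the empty fiber at $0$ (resp.\ at $1$). The homotopy formula of Proposition \ref{multhomo}, together with $i_0^*(\ol{\AL})=0$ and $\dN_{\A^1}(\ol{\AL})=0$, gives
\[
\dN_{\A^1}(\ol{\Lc_W})\;=\;\ol{\AL}\;-\;p_0^*\,i_0^*(\ol{\AL})\;-\;\mu^*\,\dN_{\A^1}(\ol{\AL})\;=\;\ol{\AL},
\]
which is exactly Equation \eqref{ED-Lc} for $\ol{\Lc_W}$; restricting via $j^*$ and using Proposition \ref{equiopen} transfers the equation to $\Lc_W$ in $\Nc$. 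The analogous computation with $\nu^*$, $i_1^*$ and $p_1^*$ produces \eqref{ED-Lcu} for $\Lcu_W$.

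The main obstacle is not the inductive geometric step---which is merely a two-fold application of the $\A^1$-homotopies of Proposition \ref{multhomo}---but the combinatorial verification that the coefficients of Definition \ref{aapbbpcoef} are precisely those forcing $\dN(\AL)=\dN(\ALu)=0$; this is exactly Proposition \ref{dALdALu0}, itself a consequence of the quadratic relations $r=s=t=0$ of Proposition \ref{relTrst} between the $\alpha_{U,V}^W$ and $\beta_{U,V}^W$. Remark \ref{remchangebas} clarifies the conceptual origin of this choice as a change of basis in $\Fq$, and once it is granted, equidimensionality and the empty-fiber properties propagate mechanically through the induction via Propositions \ref{equistr}, \ref{equiopen} and \ref{multequi}.
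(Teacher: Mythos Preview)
Your overall inductive architecture is the paper's, and the final construction via $\mu^*,\nu^*$ together with Proposition~\ref{multhomo} is correct. But there is a real gap in the passage to $\A^1$.

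You invoke Proposition~\ref{equistr}(2)--(3) and Proposition~\ref{dALdALu0} directly for the extensions $\ol{\AL},\ol{\ALu}$. This is not legitimate, because $\ol{\Lc_0}$ and $\ol{\Lc_1}$ are \emph{not} in $\Nge{\A^1}{1}(1)$: for instance $\ol{\Lc_0}\cap(\A^1\times\{0\})\to\A^1$ is not dominant, so $\ol{\Lc_0}$ fails equidimensionality over $\A^1$. Hence Proposition~\ref{equistr}(2) does not place products such as $\ol{\Lc_0}\cdot\ol{\Lc_V}$ or $\ol{\Lc_1}\cdot\ol{\Lcu_V}$ in $\Nge{\A^1}{2}(p)$ for free. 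Worse, $\da[\A^1](\ol{\Lc_0})=[0;0]\neq 0$ and $\da[\A^1](\ol{\Lc_1})=[1;0]\neq 0$ in $\cNg{\A^1}{\bullet}$, so the hypothesis $d(\Lc_0)=d(\Lc_1)=0$ of Proposition~\ref{dALdALu0} fails over $\A^1$; the Leibniz rule produces extra terms $\da[\A^1](\ol{\Lc_0})\cdot\ol{\Lc_V}$ and $\da[\A^1](\ol{\Lc_1})\cdot\ol{\Lcu_V}$ that you have not accounted for.

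The paper's Lemma~\ref{d(EDLA)} fills exactly this hole: it argues fiber-by-fiber that although $\ol{\Lc_0}$ is not equidimensional, the product $\ol{\Lc_0}\cdot\ol{\Lc_V}$ \emph{is} equidimensional over $\A^1$ because the only bad fiber of $\ol{\Lc_0}$ is at $0$, where $\ol{\Lc_V}$ is empty by the inductive hypothesis; and it shows that the spurious Leibniz term $[0;0]\cdot\ol{\Lc_V}$ vanishes \emph{as a cycle} for the same reason. The analogous argument with $1$ in place of $0$ handles $\ol{\Lc_1}\cdot\ol{\Lcu_V}$. Once these two points are established, your application of Proposition~\ref{dALdALu0} and the rest of your argument go through unchanged. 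So the fix is local: insert the content of Lemma~\ref{d(EDLA)} between your statement ``$\ol{\AL}\in\Nge{\A^1}{2}(p)$'' and your appeal to Proposition~\ref{dALdALu0}.
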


The remainder of this section is devoted to proving the above
theorem. Let $\AL$ and $\ALu$ denote the right-hand side of \eqref{ED-Lc} and
\eqref{ED-Lcu} respectively. The proof works by induction, and will be 
developed by the following steps:
 \begin{itemize}
\item Reviewing the cycles $\Lc_{01}$ and $\Lcu_{01}$ presented in subsection
  \ref{subs:poly} in order to show that they give the desired cycles for $W=01$.
\item Proving that $\AL$ and $\ALu$ have
  differential $0$ in $\Nc$. This was essentially proved in  Proposition \ref{dALdALu0}.
\item Extending $\AL$ and $\ALu$ to $\A^1$ and proving in Lemma \ref{d(EDLA)} that the
  differential stays $0$ in $\Nc[\A^1]$.
\item Constructing $\Lc_W$ and $\Lcu_W$ by pull-back by the
  multiplication and pull-back by the twisted multiplication in Lemma \ref{buildLc}. 
\item Proving that the pull-back by the (twisted) multiplication preserves the 
  equidimensionality property and has empty fiber at $x=0$ (resp. $x=1$), as a
  direct consequence of Proposition \ref{multequi}. 
\item Showing that $\Lc_W$ and $\Lcu_W$ satisfy the expected differential equations, which
  follows from the homotopy property of the (twisted) multiplication given in
  Proposition \ref{multhomo}.  
\end{itemize}
\begin{proof}
We start the induction with the only Lyndon word of length $2$: $W=01$. 
\begin{exm}\label{LcLcu01} In Section \ref{subs:poly}, we already considered
  the product 
\[
b=\Lcz \Lco=[x;x,1-x]. \subset X\times \square^2.
\]
In other words, up to projection onto the alternating elements, $b$ is nothing but the
graph of the function $X \lra (\p^1)^2$ sending $x$ to $ (x,1-x)$. Its closure
$\ol{b}$ in $\A^1 \times \square^2$ is induced by the graph of $x \mapsto (x,1-x)$
viewed as a function from $\A^1$ to $(\p^1)^2$:
\[
\ol{b}= [x;x,1-x] \qquad \subset \A^1\times \square^2.
\] 
From this expression, we see that $\da[\A^1](\ol{b})=0$.

Proposition  \ref{equistr}
already ensures that $b$ is equidimensional over $X$, as this is the case for both
$\Lcz$ and $\Lco$. Then, in order to show that $\ol{b}$ is equidimensional over
$\A^1$, it is enough to look at the fibers over $0$ and $1$. In both cases, the
fiber is empty and $\ol{b}$ is equidimensional over $\A^1$.  Now, set
\[
\ol{\Lc_{01}}=\mu^*(\ol{b}) \quad \mx{ and } \quad \ol{\Lc_{01}}=\nu^*(\ol{b}) 
\] 
where $\mu^*$ and $\nu^*$ are defined in Proposition \ref{multequi}. The same
proposition shows that $\ol{\Lc_{01}}$ and $\ol{\Lcu_{01}}$ are
equidimensional over $\A^1$; more precisely they are elements of
$\Nge{\A^1}{1}(2)$.  The same proposition shows that $\ol{\Lc_{01}}$ and
$\ol{\Lcu_{01}}$ have empty fiber at $0$ and $1$ respectively because $\ol b$ has. 

Since the fibers at $0$ and $1$ of $\ol{b}$ are empty and 
$\da[\A^1](\ol{b})=0$, we
conclude from Proposition \ref{multhomo} that 
\[
\da[\A^1](\ol{\Lc_{01}})=\da[\A^1](\ol{\Lcu_{01}})=\ol{b}.
\]

Finally, we define 
\[
\Lc_{01}=j^*(\ol{\Lc_{01}}) \quad \mx{ and } \quad
 \Lcu_{01}=j^*(\ol{\Lcu_{01}}), 
\]
where $j $ is the inclusion $X \lra \A^1$, and conclude using Proposition
\ref{equiopen}.

We can explicitly compute the two pull-backs, and obtain a parametric
representation 
\[
\Lc_{01} =[x; 1-\frac{x}{t_1}, t_1, 1-t_1], \qquad 
\Lcu_{01} =[x; \frac{t_1-x}{t_1-1}, t_1, 1-t_1].
\]
In order to compute the pull-back, observe that if $u=1-x/t_1$ then 
\[
\frac{x}{1-u} = t_1.
\]
Computing the pull-back by $\mu^*$ then comes down to simply 
rescaling the new $\square^1$
factor which arrives in first position. The case of $\nu^*$ is similar, but using
the fact that for $u=\frac{t_1-x}{t_1-1}$ we have
\[
\frac{x-u}{1-u}=t_1.
\]
\end{exm}

Let $W$ be a Lyndon word of length $p$ greater than or equal to $3$. From now on, we
assume that Theorem \ref{thm:cycleLcLcu} holds for any Lyndon word of length strictly 
less than $p$. We set 
\[
\AL=\sum_{U<V} a_{U,V}^W\Lc_U \Lc_V + \sum_{U,V}b_{U,V}^W\Lcu_{U}\Lc_V, 
\]
and 
\[
\ALu=\sum_{U<V} \ap_{U,V}^W\Lcu_U \Lcu_V + \sum_{U,V}\bp_{U,V}^W\Lcu_{U}\Lc_V, 
\]

Lemma \ref{rem:coefAL} shows that $\AL$ and $\ALu$ only involve Lyndon words
$U$ and $V$ such that the sum of the length of $U$ and the length of $V$ is
equal to the length of
$W$. In particular the various coefficients are $0$ as soon as
$U$ or $V$ has length greater than or equal to $W$.

The induction hypothesis gives the existence of $\Lc_U$ and $\Lcu_V$
for any $U$ and $V$ of smaller length, and by definition
$\dN(\Lcz)=\dN(\Lc_1)=0$. So the combinatorial Proposition
\ref{dALdALu0}, with $\Ac U=\Lc_U$ and $\Acu U=\Lcu_U$, shows that
\begin{equation}\label{dNAL0}
\dN(\AL)=\dN(\ALu)=0.
\end{equation}

\begin{lem}[extension to $\A^1$]\label{d(EDLA)}
 Let $\ol{\AL}$ (resp. $\ol{\ALu}$) denote the
  algebraic cycles in $\Zc(\A^1\times \square^{2p-2})$ obtained by
  taking the  Zariski closure in $\A^1\times \square^{2p-2}$ of each term in the formal sum
  defining $\AL$ (resp. $\ALu$). Then 
\begin{itemize}
\item $\ol{\AL}$ and $\ol{\ALu}$ are equidimensional over $\A^1$ with respect to
  any face of $\square^{2p-2}$; i.e. $\ol{\AL}$ and $\ol{\ALu}$ are in
  $\Nge{\A^1}{2}(p)$. 
\item $\AL$ has empty fiber at $0$ and $\ALu$ has empty fiber at $1$.
\item $\da[\A^1](\ol{\AL})=\da[\A^1](\ol{\ALu})=0$
\end{itemize} 
\end{lem}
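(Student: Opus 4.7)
The plan is to dispatch the three assertions separately, invoking the induction hypothesis at all lengths strictly smaller than $p$ together with the structural results Propositions~\ref{equistr} and~\ref{equiopen} on equidimensional cycles, and the purely combinatorial identity of Proposition~\ref{dALdALu0}.

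For the equidimensional extension to $\A^1$, the induction supplies $\ol{\Lc_{W'}},\ol{\Lcu_{W'}}\in\Nge{\A^1}{1}(|W'|)$ for every Lyndon word $W'$ of length between $2$ and $p-1$, while for $W'=0$ and $W'=1$ the graphs of $t\mapsto t$ and $t\mapsto 1-t$ give the obvious extensions in $\Nge{\A^1}{1}(1)$. Proposition~\ref{equistr} promotes each product $\ol{\Lc_U}\cdot\ol{\Lc_V}$ and $\ol{\Lc_U}\cdot\ol{\Lcu_V}$ to an element of $\Nge{\A^1}{2}(p)$; identifying such a product with the Zariski closure of the corresponding term of $\AL$ (resp.~$\ALu$), Proposition~\ref{equiopen} yields $\ol{\AL},\ol{\ALu}\in\Nge{\A^1}{2}(p)$ together with $\AL=j^*(\ol{\AL})$ and $\ALu=j^*(\ol{\ALu})$. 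Next, a direct application of the Leibniz rule reduces $\da[\A^1](\ol{\AL})$ and $\da[\A^1](\ol{\ALu})$ to the same formal combination handled in the proof of Proposition~\ref{dALdALu0}; applying that proposition to the cdga $(\Nge{\A^1}{\bullet},\da[\A^1])$, with $\ol{\Lc_{W'}}$ and $\ol{\Lcu_{W'}}$ substituted for the abstract generators and with the inductive fact that these extensions satisfy \eqref{ED-Lc} and \eqref{ED-Lcu} inside $\Nge{\A^1}{\bullet}$, gives the vanishing of both differentials.

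The main obstacle is the empty-fiber assertion, since it does not hold term by term unless one exploits the explicit shape of $\Lc_0$ and $\Lc_1$. The key observation is that the defining equations $u_1=t$ and $u_1=1-t$ of $\Lc_0$ and $\Lc_1$ meet the removed value $u_1=1$ of $\square^1=\p^1\sm\{1\}$ exactly at $t=1$ and $t=0$ respectively, so $\ol{\Lc_0}|_{t=1}=\emptyset$ and $\ol{\Lc_1}|_{t=0}=\emptyset$; combined with the inductive empty-fiber property $\ol{\Lc_{W'}}|_{t=0}=\emptyset$ and $\ol{\Lcu_{W'}}|_{t=1}=\emptyset$ for $|W'|\geqs 2$, this will supply at least one factor with the required empty fiber in every surviving summand. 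Concretely for $\ol{\AL}$ at $t=0$: the $a_{U,V}^W$-terms with $V=1$ vanish by Remark~\ref{rem:coefAL}, and for $V<1$ either $U=0$ (whence $|V|\geqs 2$ and $\ol{\Lc_V}|_0=\emptyset$) or $|U|\geqs 2$ (whence $\ol{\Lc_U}|_0=\emptyset$); the $b_{U,V}^W$-terms with $U=0$ vanish by the same remark, and otherwise the factor $\ol{\Lc_U}$ has empty fiber at $0$ (either by $U=1$ or by $|U|\geqs 2$). A symmetric analysis, with $\theta:t\mapsto 1-t$ exchanging the roles of $0$ and $1$, handles $\ol{\ALu}$ at $t=1$, and Proposition~\ref{equistr} then propagates the empty fiber of a single factor to the full product.
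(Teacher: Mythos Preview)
There is a genuine gap in your treatment of the generators $\Lc_0$ and $\Lc_1$. You assert that their closures lie in $\Nge{\A^1}{1}(1)$, but this is false: the closure $\ol{\Lc_0}=[t;t]$ meets the face $u_1=0$ in the single point $(0,0)$, whose projection to $\A^1$ is not dominant, so $\ol{\Lc_0}$ is \emph{not} equidimensional over $\A^1$ with respect to all faces (and likewise $\ol{\Lc_1}$ fails at $t=1$). Consequently Proposition~\ref{equistr} cannot be invoked directly for the products $\ol{\Lc_0}\cdot\ol{\Lc_V}$ and $\ol{\Lc_1}\cdot\ol{\Lcu_V}$, and your equidimensionality argument for $\ol{\AL}$ and $\ol{\ALu}$ is incomplete. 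The paper repairs this by a direct fiberwise analysis: the only problematic fiber for $\ol{\Lc_0}$ is $t=0$, and there the other factor $\ol{\Lc_U}$ (with $|U|\geqs 2$) has empty fiber by induction, so the product $\ol{\Lc_0}\cdot\ol{\Lc_U}$ is in fact equidimensional over $\A^1$; the case of $\ol{\Lc_1}\cdot\ol{\Lcu_V}$ is symmetric.

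The same issue undermines your vanishing argument for $\da[\A^1](\ol{\AL})$. You propose to apply Proposition~\ref{dALdALu0} inside the cdga $\Nge{\A^1}{\bullet}$, but its hypothesis $d(\Lc_0)=d(\Lc_1)=0$ fails over $\A^1$: one computes $\da[\A^1](\ol{\Lc_0})=[0]\in\cNg{\A^1}{2}(1)$, a nonzero codimension-$1$ cycle supported at $t=0$ (and similarly for $\ol{\Lc_1}$ at $t=1$). The paper's fix is again to exploit the empty fiber of the companion factor: since $\ol{\Lc_U}|_{t=0}=\emptyset$, the extra term $\da[\A^1](\ol{\Lc_0})\cdot\ol{\Lc_U}$ in the Leibniz expansion vanishes as a cycle, so that effectively $\da[\A^1](\ol{\Lc_0}\cdot\ol{\Lc_U})=-\ol{\Lc_0}\cdot\da[\A^1](\ol{\Lc_U})$, and only then does the combinatorial identity of Proposition~\ref{dALdALu0} apply. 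Your empty-fiber paragraph, by contrast, is essentially correct and even slightly sharper than the paper's discussion; it is precisely that analysis which should be fed back into the other two items.
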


\begin{proof}
The cases of $\AL$ and $\ALu$ are very similar, so we only discuss the case
of $\AL$.

Let $U$ and $V$ be Lyndon words different from $0$ and $1$, of respective length
$q$ and $q'$ strictly smaller than the length $p$ of $W$.

The induction hypothesis and Proposition \ref{equiopen} show that the
equidimensional cycles over $\A^1$  $\ol{\Lc_{U}}$ (resp. $\ol{\Lcu_{U}}$) and
$\ol{\Lc_{V}}$ 
(resp. $\ol{\Lcu_{V}}$) are the Zariski closure of ${\Lc_{U}}$
(resp. ${\Lcu_{U}}$) and ${\Lc_{V}}$ (resp. ${\Lcu_{V}}$) respectively.

Thus, Proposition \ref{equiopen} ensures that
\[
\ol{\Lc_{U}\cdot \Lc_{V}}=\ol{\Lc_{U}}\cdot \ol{\Lc_{V}}\in \Nge{\A^1}{2}(p)
\qquad \mx{and} \qquad  
\ol{\Lcu_{U}\cdot\Lc_{V}}=\ol{\Lcu_{U}}\cdot \ol{\Lc_{V}}\in \Nge{\A^1}{2}(p).
\]
and that the above products have empty fiber at $0$, since this is the case 
for $\ol{\Lc_U}$ and $\Lc_V$ (Proposition \ref{equistr}).

In order to show that $\AL$ extends to an equidimensional cycle over $\A^1$, it
is now enough to study the products $\Lcz \cdot \Lc_V$ and $\Lcu_U\cdot \Lco$,
since Lemma \ref{rem:coefAL} shows that these are the only types of 
product in $\AL$ with a non-zero coefficient involving  $\Lcz$ and $\Lco$ which are not
equidimensional over $\A^1$.  

We  show below that $\ol{\Lcz \Lc_V}$
is equidimensional over $\A^1$ and has empty fiber at $0$. 

We first observe that $V$ has length $p-1$ and that $\Lc_V$ is in
$\Nge{X}{1}(p-1)$. Let $Z$ be an irreducible component of $\Supp(\Lc_V)$, and
let $\ol{Z}$ denote its Zariski closure 
\[
\A^1\times \square^{2p-3} .
\]
Then $\ol{Z}$ %(resp. $\ol{Z}^{\sm 0}$)
is an equidimensional cycle over $\A^1$ because it is an irreducible component
of $\Supp(\ol{\Lc_V})$. Moreover, $\ol Z$ has an empty fiber at $0$.

Let $\Gamma$ denote the graph of $\id : \p^1\lra \p^1$. Then we have
\[
\Lcz=%\Alt(\Gamma|_{X\times X})=
\Alt\left(\Gamma\cap (X\times \square^1) \right)%, \quad 
\qquad \mx{and} \qquad 
\ol{\Lcz}=\Alt\left(\Gamma\cap (\A^1\times \square^1) \right). 
\]     

We simply write $\Gamma_{X}$ and $\ol{\Gamma_{X}}$
for 
\[
\Gamma_X=\Gamma\cap (X\times \square^1)
\quad \mx{ and } \quad 
\ol{\Gamma_X}=\Gamma\cap (\A^1\times \square^1) 
\]

It is enough to show that $\ol{\Gamma_X \cdot Z} $ is equidimensional over
$\A^1$ (here $\cdot$ denotes the product in $\Nge{X}{\bullet}$). As the projection
\[
\Gamma_X \times Z \lra X \times X
\]
is equidimensional, we have
 \[
\ol{\Gamma_X \cdot Z} \simeq (\ol{\Gamma_X} \times \ol{Z})\cap \im(\Delta_{\A^1})
\]
where $\Delta_{\A^1} : \A^1 \times \square^{2p-2} \lra \A^1 \times  \A^1 \times
\square^{2p-2}$. 

Hence it is enough to show that for any face $F$ of $\square^{2p-2}$ the projection
\[
(\ol{\Gamma_X} \times \ol{Z})\cap \big(\im(\Delta_{\A^1})\cap \times F\big)
\lra \im(\Delta_{\A^1})
\]
is either empty or dominant and equidimensional of relative dimension $\dim(F)-p$.

Restricting the above situation to $X \times X\subset \A^1 \times \A^1$, we see
that it is enough to check that the fibers at $(0,0)$ and $(1,1)$ are empty,
since
\[
(\Gamma_X \times Z) \cap (X \times X \times F)
\] 
is either empty or dominant and equidimensional of the right relative dimension over
$X\times X$.

We write the face $F$  as $F_1 \times F'$ with $F^1$ a face of $\square^1$ and
$F'$ a face of $\square^{2p-3}$. Using the fact that 
\[
\A^1\times \A^1 \times \square^{2p-2} \simeq 
( \A^1 \times \square^{1})\times ( \A^1 \times \square^{2p-3}),
\]
the fiber at $(1,1)$ is given by 
\[
(\ol{\Gamma_X} \times \ol{Z})\cap (\{(1,1)\}\times F)\simeq
(\ol{\Gamma_X}\cap \{1\}\times F_1)\times (\ol Z \cap \{1\}\times F')=\emptyset
\] 
because $\ol{\Gamma_X}\cap \{1\}\times F_1$ is empty in $\A^1 \times \square^1$
($\ol{\Gamma_X}$ is the restriction of the graph of $\id$).

Similarly, the fiber at $(0,0)$ is given by 
\[
(\ol{\Gamma_X} \times \ol{Z})\cap (\{(0,0)\}\times F)\simeq
(\ol{\Gamma_X}\cap \{0\}\times F_1)\times (\ol Z \cap \{0\}\times F')=\emptyset
\] 
because $\ol{Z}$ has empty fiber at $0$ (by induction hypothesis $\ol{\Lc_{V}}$ has
empty fiber at $0$).

Thus,  $\ol{\Gamma_X Z}$ is equidimensional over $\A^1$ with empty fiber at
$0$ (and also at $1$) for any irreducible component $Z$ of $\Supp(\Lc_V)$. Hence  
$\ol{\Lcz \Lc_V}$ is equidimensional with respect to any face and
has empty fiber at $0$ (and at $1$). Exchanging the role of $0$ and $1$, a
similar argument  shows that $\ol{\Lcu_U\Lco}$ 
is equidimensional over $\A^1$ and has empty fiber at $1$ (and at $0$).

The above discussion also shows that $\ol{\AL}$ is equidimensional over $\A^1$
and has an empty fiber at $0$.  

Now, we need to show that 
\[
\da[\A^1](\ol{\AL})=0.
\]

We compute
\[
j^*(\da[\A^1](\ol{\AL}))=\dN(j^*(\ol{\AL}))=\dN(\AL)=0
\]
as explained above. The injectivity of $j^*$ on 
equidimensional cycles (Proposition \ref{equiopen}) ensures that
\[
\da[\A^1](\ol{\AL})=0.
\]

\end{proof}
The equality
\[
\da[\A^1](\ol{\AL})=0 \qquad \left(\mx{resp. }\da[\A^1](\ol{\ALu})=0\right)
\]
shows  that $\ol{\AL}$ (resp. $\ol{\ALu}$) gives a class in $\HH^2(\cNg{\A^1}{\bullet})$.  
As Corollary \ref{H2A1} ensures that this cohomology group is $0$,  $\ol{\AL}$ (resp.
$\ol{\ALu}$) is the
boundary of some cycle $c$ (resp. $c'$) in $\cNg{\A^1}{1}$. Lemma
\ref{buildLc} below gives this $c$ (resp. $c'$) explicitly and, after restriction
to $X$, concludes the proof of
Theorem \ref{thm:cycleLcLcu}.

\end{proof}
\begin{lem}\label{buildLc}
Define $\ol{\Lc_W}$ and $\ol{\Lcu_W}$ in $\Nge{\A^1}{1}(p)$ by 
\[
\ol{\Lc_W}=\mu^*(\ol{\AL}) \quad \mx{and} \quad \ol{\Lcu_W}=\nu^*(\ol{\ALu})
\]
where $\mu^*$ and $\nu^*$ are  the morphisms defined in Proposition
 \ref{multequi}.

Let $j: X \lra \A^1$ be the natural inclusion of $\p^1\sm \{0,1,\infty\}$
into $\A^1$ and define $\Lc_W$ and $\Lcu_W$ by
\[
\Lc_W=j^*(\ol{\Lc_W}) \quad \mx{and} \quad \Lcu_W=j^*(\ol{\Lcu_W}).
\]

Then $\Lc_W$ and $\Lcu_W$ satisfy the conditions of Theorem
\ref{thm:cycleLcLcu}.
\end{lem}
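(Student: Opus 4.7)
The plan is to verify the four conditions of Theorem \ref{thm:cycleLcLcu} for $\Lc_W$ and $\Lcu_W$ in turn, using essentially everything assembled in the preceding lemmas and propositions. I will work entirely with $\ol{\Lc_W}$ and $\Lc_W$; the case of $\ol{\Lcu_W}$ and $\Lcu_W$ is identical after replacing $\mu^*$, $i_0$, $p_0$ by $\nu^*$, $i_1$, $p_1$.

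First I would use Lemma \ref{d(EDLA)}, which gives $\ol{\AL}\in \Nge{\A^1}{2}(p)$, together with the fact that $\ol{\AL}$ has empty fiber at $0$. Applying Proposition \ref{multequi} then yields $\ol{\Lc_W}=\mu^*(\ol{\AL})\in \Nge{\A^1}{1}(p)$, and moreover $\ol{\Lc_W}$ still has empty fiber at $0$ (this takes care of condition 3). Proposition \ref{equiopen} applied to the open immersion $j:X\hookrightarrow \A^1$ then shows that $\Lc_W=j^*(\ol{\Lc_W})$ lies in $\Nge{X}{1}(p)$, which is condition 1, and that $\Lc_W$ is the restriction of the equidimensional extension $\ol{\Lc_W}$, which is condition 2.

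It remains to verify the differential equation \eqref{ED-Lc} for $\ol{\Lc_W}$, which is the heart of the argument. By Proposition \ref{multhomo},
\begin{equation*}
\da[\A^1]\circ\mu^* + \mu^*\circ \da[\A^1] \;=\; \id - p_0^*\circ i_0^*
\end{equation*}
as endomorphisms of $\Nge{\A^1}{\bullet}$. Lemma \ref{d(EDLA)} provides the two cancellations $\da[\A^1](\ol{\AL})=0$ and $i_0^*(\ol{\AL})=0$ (the latter because $\ol{\AL}$ has empty fiber at $0$). Substituting $\ol{\AL}$ into the homotopy formula therefore gives
\begin{equation*}
\da[\A^1](\ol{\Lc_W}) \;=\; \da[\A^1](\mu^*(\ol{\AL})) \;=\; \ol{\AL} \;-\; p_0^*(i_0^*(\ol{\AL})) \;-\; \mu^*(\da[\A^1](\ol{\AL})) \;=\; \ol{\AL},
\end{equation*}
which is exactly the $\A^1$-version of \eqref{ED-Lc}. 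Restricting via the morphism of cdga $j^*:\Nge{\A^1}{\bullet}\to \Nge{X}{\bullet}$ of Proposition \ref{equiopen} and using that $j^*(\ol{\AL})=\AL$ (by term-by-term Zariski closure) yields $\dN(\Lc_W)=\AL$, completing condition 4 in $\Nc$.

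No step of this argument is truly hard, since all the heavy lifting has already been done: the combinatorial vanishing $\dN(\AL)=\dN(\ALu)=0$ in Proposition \ref{dALdALu0}, the geometric extension and vanishing statements in Lemma \ref{d(EDLA)}, and the homotopy/equidimensionality behavior of $\mu^*$ and $\nu^*$ in Propositions \ref{multequi} and \ref{multhomo}. The only point requiring mild care is the bookkeeping for $\ol{\Lcu_W}$: one must apply the \emph{twisted} multiplication $\nu^*$ (so that the exchange $0\leftrightarrow 1$ produced by $\theta$ matches the empty fiber of $\ol{\ALu}$ at $1$ rather than at $0$), and then the identical homotopy argument with $p_1^*\circ i_1^*$ in place of $p_0^*\circ i_0^*$ produces $\da[\A^1](\ol{\Lcu_W})=\ol{\ALu}$, whence $\dN(\Lcu_W)=\ALu$ by restriction.
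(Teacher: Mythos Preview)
Your proposal is correct and follows essentially the same approach as the paper: both invoke Proposition~\ref{multequi} to ensure $\ol{\Lc_W}=\mu^*(\ol{\AL})$ is equidimensional with empty fiber at $0$, apply the homotopy identity of Proposition~\ref{multhomo} together with $\da[\A^1](\ol{\AL})=0$ and $i_0^*(\ol{\AL})=0$ from Lemma~\ref{d(EDLA)} to obtain $\da[\A^1](\ol{\Lc_W})=\ol{\AL}$, and then restrict via $j^*$ using Proposition~\ref{equiopen}. The only difference is the order in which the four conditions are verified, which is purely cosmetic.
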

\begin{proof} As in Proposition \ref{multhomo},  let $i_0$ (resp. $i_1$) be  the
  inclusion of $0$ (resp. $1$) 
  in $\A^1$:
\[
i_0 : \{0\} \lra \A^1 \quad  \quad i_{1} : \{1\} \lra \A^1,
\]  
and let $p_0$ and $p_1$ be the corresponding projection $p_{\ve} : \A^1 \lra
\{\ve\}$ for $\ve=0,1$.

Proposition \ref{multequi} ensures that $\ol{\Lc_W}$ (resp. $\ol{\Lcu_W}$ ) is 
equidimensional over $\A^1$ with respect to faces, and  has an empty 
fiber at $x=0$
(resp. $x=1$); in particular $i_0^*(\ol{\AL})=i_1^*(\ol{\ALu})=0$.
 Moreover, Proposition \ref{multhomo} enables us to compute
$\da[\A^1](\ol{\Lc_W})$  as 
\begin{align*}
\da[\A^1](\ol{\Lc_W})=&\da[\A^1]\circ \mu^*(\ol{\AL}) \\
=&\id(\ol{\AL})-p_0^*\circ i_0^*(\ol{\AL})-\mu^*\circ \da[\A^1](\ol{\AL}) \\
=& \ol{\AL} 
\end{align*}
because  $\da[\A^1](\ol{\AL})=0$ and $i_0^*(\ol{\AL})=0$.

Using Proposition \ref{multhomo} again, a similar computation gives
\[
\da[\A^1](\ol{\Lcu_W})= \ol{\ALu}
\]
because  $\da[\A^1](\ol{\ALu})=0$ and $i_1^*(\ol{\ALu})=0$.

Now, as
\[
\Lc_W=j^*(\ol{\Lc_W}) \quad \mx{and} \quad \Lcu_W=j^*(\ol{\Lcu_W}),
\]
$\Lc_W$ and $\Lcu_W$ are equidimensional with respect to any faces over $X$ by
Proposition \ref{equiopen}, and
their closures in $\A^1\times \square^{2p-1}$ are exactly $\ol{\Lc_W}$ and
$\ol{\Lcu_W}$. As $j^*$ is a morphism of cdga's, $\Lc_W$ and $\Lcu_W$ satisfy the
expected 
differential equations, as do  $\ol{\Lc_W}$ and
$\ol{\Lcu_W}$; that is
\[
\dN(\Lc_W)=\AL
\qquad \mx{and}  \qquad 
\dN(\Lcu_W)=\ALu.
\]

This concludes the proof of the Lemma and of Theorem \ref{thm:cycleLcLcu}
\end{proof}

%\section{Concluding remarks}\label{mainsec:conclu}
\subsection{Examples in weight $4$ and $5$}

The first linear combination in the differential equation arises in weight $4$.
The first case where the differential equations for
$\Lc_W$ and $\Lcu_W$ are not the same arises in weight $5$. There are 
actually two such examples in
weight $5$. We give below one of them.

There are three Lyndon words in weight $4$: $0001$, $0011$ and $0111$. The first
linear combination arises from the word $0011$. The image of 
\[
\T{0011}(x)=\TLxxyy{x}
\]
under $\dc$, given at equation \ref{eq:dcyTw0011}, is
\begin{multline*}
\dc(\T{0011}(x))=(\T{0}(x)-\T{0}(1))\w \T{011}(x)
+(\T{001}(x)-\T{001}(1))\w \T{1}(x) \\ +\T{01}\w \T{01}(1).
\end{multline*}
Hence the cycle $\ol{\Lc_{0011}}$ is defined as the pull-back by $\mu$ of 
\[
\ol{\Lcz\Lc_{011}}+\ol{\Lcu_{001}\Lco} +\ol{\Lcu_{01}\Lc_{01}}.
\]

Consider the Lyndon word $00101$ in weight $5$. Its corresponding tree 
$\T{00101}(x)$ is 
\[
\begin{tikzpicture}[baseline=(current bounding box.center)]
\tikzstyle{every child node}=[intvertex]
\node[roots](root) {}
[deftree]
child{node{}
  child[edgesp=2]{node{}
    child[edgesp=1]{node(3){}
      edge from parent [N]}
    child[edgesp=1] {node{}  
       child {node(4){}  
        edge from parent [N]}
       child {node(5){} 
        edge from parent [N]}
    edge from parent [N]}
  edge from parent [N]
  }
  child[edgesp=2] {node{}  
     child[edgesp=1] {node(1){}  
      edge from parent [N]}
     child[edgesp=1] {node(2){} 
      edge from parent [N]}
  edge from parent [N]
  }
edge from parent [N]
}
;
%%%
\fill (root.center) circle (1/1*\lbullet) ;
\node[mathsc, xshift=-1ex] at (root.west) {x};
\node[labf] at (3.south){0};
\node[labf] at (4.south){0};
\node[labf] at (5.south){1};
%;
%%%
%\fill (root.center) circle (1/1*\lbullet) ;
%\node[mathsc, xshift=-1ex] at (root.west) {x};
\node[labf] at (1.south){0};
\node[labf] at (2.south){1};
\end{tikzpicture} %
-
\begin{tikzpicture}[baseline=(current bounding box.center)]
\tikzstyle{every child node}=[intvertex]
\node[roots](root) {}
[deftree]
child{node{}
  child{node{} 
    child{node(2){}
      edge from parent [N]}
    child{node{}
      child{node(3){}
        edge from parent [N]}
      child {node{}  
         child {node(4){}  
          edge from parent [N]}
         child {node(5){} 
          edge from parent [N]}
      edge from parent [N]}
    edge from parent [N]}
  edge from parent [N]
  }
  child{node(1){}
    edge from parent [N]
  }
edge from parent [N]
}
;
%%%
\fill (root.center) circle (1/1*\lbullet) ;
\node[mathsc, xshift=-1ex] at (root.west) {x};
\node[labf] at (1.south){1};
\node[labf] at (2.south){0};
\node[labf] at (3.south){0};
\node[labf] at (4.south){0};
\node[labf] at (5.south){1};
\end{tikzpicture}
\]
and computing $\dc(\T{00101}(x))$ gives 
\[
\dc(\T{00101})=\T{001}(x)\w\T{01}(x)-\T{0001}(x)\w\T{1}(x)-\T{1}(x)\w\T{0001}(1).
\] 
Finally, $\Lc_{00101}$ and $\Lcu_{00101}$ satisfy respectively
\begin{equation}\label{00101dL}
\dN(\Lc_{00101})=\Lc_{001} \Lc_{01}- \Lcu_{0001}\Lco
\end{equation}
and
\begin{equation}\label{00101dLu}
\dN(\Lcu_{00101})=-\Lcu_{001} \Lcu_{01}-\Lcu_{01} \Lc_{001}
+ \Lcu_{001}\Lc_{01} 
- \Lcu_{0001} \Lco .
\end{equation}
\subsection{An integral associated to $\Lc_{011}$}\label{subsec:int}
In this section, we  sketch  how to associate an integral to the cycle
$\Lc_{011}$. We directly follow the algorithm described in
\cite{GanGonLev05}[Section 9] and put in detailed practice in
\cite{GanGonLev06}.  There will be no general review of the direct Hodge realization
from Bloch-Kriz  motives \cite{BKMTM}[Section 8 and 9]. Gangl, Goncharov and
Levin's construction seems to consist in setting particular choices of
representatives in the intermediate Jacobians of $\square^n$ in relation with their
algebraic cycles. 

We do not extend this description here, nor do we generalize the
computations below. Relating the Bloch-Kriz approach to the explicit algorithms
described in \cite{GanGonLev05, GanGonLev06} and the application to our particular
family of cycles $\Lc_W$ will be the topic of a future paper, as it 
requires, in particular, a family ${\Lc_W}^B$ of elements in
$\HH^0(B(\cNg{X}{\bullet}))$ not yet at our disposal.

Let us recall the expression of $\Lc_{011}$ as a parametrized cycle:
\begin{align*}
\Lc_{011}&=[x,1-\frac{x}{t_2},\frac{t_1-t_2}{t_1-1}, t_1,1-t_1,1-t_2] \\
&=-[x; 1-\frac{x}{t_2},1-t_2,\frac{t_1-t_2}{t_1-1},t_1,1-t_1].
\end{align*}
where the second expression is due to the alternating projector and seems to the
author more suitable for the computations below.

We want to bound $\Lc_{011}$ by an algebraic-topological cycle
in a larger bar construction (not described here). This is done by introducing topological
variables $s_i$ in real 
simplices  
\[
\Delta_s^n=\{0\leqs s_1 \leqs \cdots \leqs s_n \leqs 1\}.
\]
 Let $d^s : \Delta^n_s \ra \Delta_s^{n-1}$ denote the simplicial differential
\[
d^s=\sum_{k=0}^{n}(-1)^k i_k^*
\] 
where $i_k : \Delta_s^{n-1} \ra \Delta_s^{n}$ is given by the face
$s_k=s_{k+1}$ in $\Delta_s^n$ with the usual conventions for $k=0,n$.

Let us define
\[
C_{011}^{s,1}=[x; 1-\frac{s_3x}{t_2},1-t_2,\frac{t_1-t_2}{t_1-1},t_1,1-t_1]
\]    
for $s_3$ going from $0$ to $1$. Then, $d^s(C_{011}^{s,1})=\Lc_{011}$,
since $s_3=0$
implies that the first cubical coordinate is $1$.

Now, the algebraic boundary $\dN$ of $C_{011}^{s,1}$ is given by the intersection with the
codimension $1$ faces of $\square^5$; giving 
\[
\dN(C_{011}^{s,1})=[x;1-s_3x, \frac{t_1-s_3x}{t_1-1},t_1,1-t_1].
\]
We can again bound this cycle by introducing a new simplicial variable $0\leqs
s_2\leqs s_3$, and the cycle 
\[
C_{011}^{s,2}=[x;1-s_3x,\frac{t_1-s_2x}{t_1-s_2/s_3},t_1,1-t_1].
\]
The intersections with the faces of the simplex $\{0\leqs s_2 \leqs s_3 \leqs
1\}$ given by 
$s_2=0$ and $s_3=1$ lead to empty cycles (since at least one cubical coordinate
equals $1$) and a negligible cycle respectively. Thus, up to a negligible
cycle, the simplicial boundary of $C_{011}^{s,2}$ satisfies  
\[
d^s(C_{011}^{s,2})=-\dN(C_{011}^{s,1})=-[x;1-s_3x,
\frac{t_1-s_3x}{t_1-1},t_1,1-t_1]. 
\]
Its algebraic boundary is given by 
\[
\dN(C_{011}^{s,2})=-[x;1-s_3x,s_2x,1-s_2x]
+[x;1-s_3x,\frac{s_2}{s_3},1-\frac{s_2}{s_3}].  
\]
Finally, we introduce a last simplicial variable $0\leqs s_1 \leqs s_2$ and a
purely topological cycle
\[
\wt{C_{011}^{s,3}}=-[x; 1-s_3x,s_2x,1-s_1x] + [x;1-s_3x,\frac{s_2}{s_3},1-\frac{s_1}{s_3}]. 
\]
Up to negligible terms, its simplicial differential is given on the one hand by
the face $s_1=s_2$:
\[
%d^s(\wt{C_{011}^{s,3}})=-\dN(C_{011}^{2,s})=
[x;1-s_3x,s_2x,1-s_2x]
-[x;1-s_3t,\frac{s_2}{s_3},1-\frac{s_2}{s_3}]
\]
which is equal to $-\dN(C_{011}^{2,s})$, and on the other hand by the face $s_2=s_3$:
\[
-[x;1-s_3 x,s_3 x ,1-s_1 x].
\]
In order to cancel this extra boundary, we define 
\[
C_{011}^{s,3}=\wt{C_{011}^{s,3}}+ [x ;1-s_2 x , s_3 x ,1-s_1 x]
\]
 whose algebraic boundary is $0$. 

Finally, we have 
\[
(d^s+\dN)(C_{011}^{s,1}+C_{011}^{s,2}+C_{011}^{s,3})=\Lc_{011}
\] 
up to negligible terms.

Now, we fix the situation at the fiber $x_0$, and following Gangl, Goncharov and
Levin, we associate to the algebraic cycle 
$\Lc_{011}|_{x=x_0}$ the integral $I_{011}(x_0)$ of the standard volume form
\[
\frac{1}{(2i\pi)^3}\frac{dz_1}{z_1}\frac{dz_2}{z_2}\frac{dz_3}{z_3}
\]
over the simplex given by $C_{011}^{s,3}$. That is:
\begin{multline*}
I_{011}(x_0)=-\frac{1}{(2i\pi)^3}\int_{0\leqs s_1 \leqs s_2 \leqs s_3 \leqs 1}
\frac{x_0\, ds_3}{1-x_0s_3} \w \frac{ds_2}{s_2} \w
\frac{x_0\, ds_1}{1-x_0s_1} \\
+ 
\frac{1}{(2i\pi)^3}\int_{0\leqs s_3 \leqs 1}\frac{x_0\, ds_3}{1-x_0s_3}
\int_{0 \leqs s_1 \leqs s_2 \leqs 1}\frac{ ds_2}{s_2} \w \frac{ ds_1}{1-s_1} \\
+
\frac{1}{(2i\pi)^3}\int_{0\leqs s_1 \leqs s_2 \leqs s_3 \leqs 1}
\frac{x_0\, ds_2}{1-x_0s_2} \w \frac{ds_3}{s_3} \w
\frac{x_0\, ds_1}{1-x_0s_1}
.
\end{multline*}
 
Taking care of the change of sign due to the numbering, the first term  in the
above sum is (for $x_0 \neq 0$ and up to the factor $(2i\pi)^{-3}$) equal to  
\[
Li_{1,2}^{\C}(x_0)=\int_{0\leqs s_1 \leqs s_2 \leqs s_3 \leqs 1}
\frac{ ds_1}{x_0^{-1}-s_1} \w \frac{ds_2}{s_2} \w
\frac{ds_3}{x_0^{-1}-s_3}
\] 
while the second term  (up to the same multiplicative factor) equals
\[
-Li_1^{\C}(x_0)Li_2^{\C}(1)
\]
and the third term (up to the inverse power of $2i \pi$ ) equals
\[
Li_{2,1}^{\C}(x_0).
\] 

Globally the integral is well-defined for $x_0=0$ and, more interestingly,
also for $x_0=1$, as the divergencies when $x_0$ goes to $1$ cancel
each other out in the above sums. A simple computation and the shuffle relation for
$Li_2^{\C}(x_0)$ shows that  the integral associated to the fiber of
$\Lc_{011}$ at $x_0=1$ is
\[
(2i\pi)^{3}I_{011}(1)=-Li_{2,1}^{\C}(1)=-\zeta(2,1).
\] 
\section{Concluding remarks}\label{mainsec:conclu}
\subsection{Comments about the setting of quasi-finite cycles}
In this paper we chose to work with cycles in the original Bloch complex
$\cNg{X}{\bullet} $ having the extra property that their projection onto the
base $X$ is equidimensional. This allows us to easily compare our construction
to previously constructed explicit cycles related to  polylogarithms (\cite{BKMTM}) or
multiple polylogarithms (with conditions on the parameters in
\cite{GanGonLev05}). Moreover, after computing the structure coefficients of
$\Tcl[1;x]$ (Definition \ref{def:brac1t}) up to some weight, our cycles can be written
 explicitly as parametrized cycles, because the pull-back by the multiplication
 $\mu^*$ introduces a term in $1-\frac{x}{t_{p-1}}$ on the new $\square^1$
 factor while the pull-back by the twisted multiplication $\nu^*$ introduces a
 terms in $\frac{t_{p-1}-x}{t_{p-1}-1}$.

In classical motivic constructions, however, it is preferable to work with
\emph{quasi-finite} cycles (cf. Definition \ref{def:qfcycle}). Quasi-finite
cycles behave better in particular when $X$ is of dimension $d\geqs 2$, with
functoriality or with a sheaf-theoretic approach. They are defined, following
Levine \cite{LEVTMFG}, as follows. 
\begin{defn}%[{\cite[]{LEVTMFG}}]
\label{def:qfcycle}Let $Y$ be an irreducible smooth variety.
\begin{itemize}
\item Let $\Zqf[p](Y,n)$ denote the free abelian group
  generated by irreducible closed subvarieties 
\[Z \subset Y\times \square^n \times(\p^1\sm\{1\})^p
\] 
such that the  restriction of the projection on $Y \times \square^n$, 
\[
 p_1 : Z \lra Y\times \square^n,
\]
is dominant and quasi-finite (i.e. of pure relative dimension  $0$).
\item We say that elements of $\Zqf[p](Y,n)$ are \emph{quasi-finite}.
\item The symmetric group $\Sn[p]$ acts on $\Zqf[p](Y,n)$ by permutation of the
  factors in $(\p^1\sm\{1\})^p$. Let $Sym_{\p^1\sm\{1\}}^p$ denote the projector
  corresponding to the \emph{symmetric} representation.
\item Following the definition of $\cNg{Y}{k}(p)$, let $\Ngqf{Y}{k}(p)$ denote 
\[
\Ngqf{Y}{k}(p)=Sym_{\p^1\sm\{1\}}^p\circ \Alt_{2p-k}\left(\Zqf[p](Y,2p-k )\otimes \Q\right).
\]
\item As in the classical case, the intersection with the codimension $1$ faces of
  $\square^{2p-k}$ induces a differential
\[
\da[Y]=\sum_{i=1}^{2p-k} (-1)^{i-1} (\dN_{i}^0 - \dN_{i}^{\infty}) 
\]
of degree $1$.
\item We define the complex of quasi-finite cycles as
\[
\Ngqf{Y}{\bullet}=\Q \oplus \bigoplus_{p \geqs 1}\Ngqf{Y}{\bullet}(p).
\]
\end{itemize}
\end{defn}
The product structure given by concatenation of factors and pull-back by the
diagonal makes  $\Ngqf{Y}{\bullet}$ into a cdga. The cohomology of
$\Ngqf{Y}{\bullet}$ agrees with higher Chow groups by \cite[Lemma
4.2.1]{LEVTMFG}. %
\begin{rem}\label{rem:condqf}
The condition on the quasi-finite cycles is much stricter than the one for
our equidimensional cycles, as it requires equidimensionality (of dimension $0$)
over 
\[
Y \times \square^n
\]
and not merely over $Y$ as in the case of equidimensional cycles.

In the case of $\Ngqf{Y}{\bullet}(p)$, however, the ambient space is much larger
due to the extra $(\p^1\sm\{1\})^p$ factors.
\end{rem}

Because of the remark above, propositions \ref{equistr} -- point (3) --,
\ref{equiopen}, \ref{multequi} and \ref{multhomo} hold with $\Ngqf{-}{\bullet}$ instead of
$\Nge{-}{\bullet}$.

Let $X$ denote $\ps$ as before. In order to obtain an equivalent of Theorem
\ref{thm:cycleLcLcu} in 
$\Ngqf{X}{\bullet}$, we need to have quasi-finite cycles $\Lczqf$ and $\Lcoqf$,
avatars of $\Lcz$ and $\Lco$ in $\Ngqf{X}{1}$. Once this is granted, 
Lemma \ref{d(EDLA)} holds in $\Ngqf{X}{2}$ by the same arguments using 
quasi-finiteness over 
\[
X \times \square^n
\]
and the empty fiber properties.

Below, we define these cycles $\Lczqf$ and $\Lcoqf$, each of which (up to 
the projectors) is given by a single irreducible variety of 
\[
X\times \square^1\times (\p^1\sm\{1\}).
\]

Let $x$ denote the standard affine coordinates on $X=\ps$; let
  $[U:V]$ be the standard projective coordinate on $\square^1$ and $[A:B]$ that
  on $\p^1\sm\{1\}$. Let $Z_0$ be defined by the following equation
\[
Z_0 : (U-V)(A-B)(U-xV)+x(1-x)UVB=0.
\] 
Similarly, let $Z_1$ be defined by the following equation
\[
Z_1 : (U-V)(A-B)(U-(1-x)V)+x(1-x)UVB=0.
\] 

\begin{prop} \label{prop:LzLoqf}
\begin{enumerate}
\item Let $\Lczqf$ and $\Lcoqf$ the images under  $Sym_{\p^1\sm\{1\}}^1\circ
  \Alt_{1} $ of $Z_0$ and $Z_1$ 
  respectively. Then $\Lczqf$ and $\Lcoqf$ are elements of $\Ngqf{X}{1}$.
\item Their Zariski closures $\ol{\Lczqf}$ and $\ol{\Lcoqf}$ to $\A^1$ lie in
  $\cNg{\A^1\times (\p^1\sm\{1\})}{1}(1)$ with empty fiber over 
\[
\{1\}\times \square^1 \times 
  (\p^1\sm\{1\})
\]
 and $\{0\}\times \square^1 \times(\p^1\sm\{1\})$ respectively.
\item The Zariski closure of their product $\ol{\Lczqf \Lcoqf}$ is an element of
  $\Ngqf{\A^1}{2}(2)$ with empty fiber at $0$ and $1$.  
\end{enumerate}
\end{prop}
\begin{proof} Part (3) is a consequence of part (2). 
The cases of $\Lczqf$ and $\Lcoqf$ are symmetric with respect to
  the role of $0$ and $1$, so we only consider the case of $\Lczqf$.

Note that the projection $Z_0 \lra X \times \square^1$ is dominant because the
defining equation is homogeneous of degree $1$ in $A$ and $B$.

Note that $U-V \neq 0$ in $\square^1$. In order to check the quasi-finiteness,
we write the equation as 
\[
A(U-V)(U-xV)=B\left(
(U-V)(U-xV)+x(1-x)UV
\right).
\]
Observe that when $x$, $1-x$, $U$ and $V$ are all non-zero, 
the equation uniquely determines $[A:B]$. For $U=0$ the equation becomes
\[
Ax=Bx.
\]
Hence $[A:B]$ should be  uniquely determined provided that $x\neq 0$ (which holds in $X
= \ps$). Recall that $[A:B]$ is the projective coordinate on $\p^1\sm\{1\}$. Thus
$A-B$ is invertible and 
\[
Z_0 \cap \left(X \times \{U=0\}\times (\p^1\sm\{1\}) \right)=\emptyset.
\]
 For $V=0$, the equation becomes 
\[
A=B
\]
without any restrictions on $x$. Thus we have
\[
Z_0 \cap \left(X \times \{V=0\}\times (\p^1\sm\{1\}) \right)=\emptyset.
\]
The above discussion shows that $Z_0$ is quasi-finite over $X$. It also shows
that its Zariski closure $\ol{Z_0}$ is not quasi-finite over $\A^1$ because of
the fiber at $x=0$. However, we have 
\[
\ol{Z_0 }\cap \left(\A^1 \times \{V=0\}\times (\p^1\sm\{1\}) \right)=\emptyset.
\]
and
\[
\ol{Z_0 }\cap \left(\A^1 \times \{U=0\}\times (\p^1\sm\{1\}) \right)=
\{0\}\times \{U=0\}\times (\p^1\sm\{1\})
\] 
which is of codimension $1$ in $\A^1 \times \{U=0\}\times (\p^1\sm\{1\})$. 
 This shows that  $\ol{Z_0}$ is an admissible cycle over $\A^1\times (\p^1\sm\{1\})$. 
Moreover the above intersection is concentrated in the fiber of $\ol{Z_0}$ over
$\{0\}\times \square^1$. 

The fiber of $\ol{Z_0}$ over $1$ is given by the equation
\[
A(U-V)^2=B(U-V)^2
\] 
and is empty in $\A^1\times \square^1 \times (\p^1\sm\{1\})$, because $U-V$
and $A-B$ are invertible there.
\end{proof}
From the above discussion we obtain the following result.
\begin{thm}\label{thm:qfcycle} For any Lyndon word $W$ of length $p\geqs 2$, there exist
  two cycles $\Lcqf W$ and $\Lcuqf W$ in $\Ngqf{X}{1}(p)$ satisfying the conditions
  of Theorem \ref{thm:cycleLcLcu} with quasi-finite cycles replacing
  equidimensional ones.
\end{thm}
\begin{rem} The proof of Proposition \ref{prop:LzLoqf} also shows that
 \begin{multline*}
 \HH^1\left((\Ngqf{\ps}{1}\right)(p)\simeq \\
 \HH^1(\Ngqf{ \Q}{1})(p) \oplus 
 \left( \HH^{0}(\Ngqf{\Q}{1})(p-1)\otimes \Q [\Lczqf]\right) 
 \oplus
 \left( \HH^{0}(\Ngqf{\Q}{1})(p-1)\otimes \Q [\Lcoqf]\right).
 \end{multline*}
where $[\Lczqf]$ and $[\Lcoqf]$ are the cohomology classes of $\Lczqf$ and
$\Lcoqf$ respectively.
\end{rem}
%\subsection{Toward multiple zeta value cycles}

%\blue{
Bloch and Kriz in \cite{BKMTM} constructed, for any $n \geqs 2$,  
  algebraic cycles corresponding to the value of the polylogarithms
  $Li_n^{\C}(x_0)$. This leads, by specialization   at $x_0=1$, to an algebraic
  cycle related to the (single) zeta value $\zeta(n)$. Algebraic cycles 
  corresponding to multiple logarithm away from the point $1$ have been
  constructed in \cite{GanGonLev06,GanGonLev05} but these cycles cannot be
  specialized at the point $1$. 
%}
\label{labelpage:lastcomments}
%\blue{
The algebraic cycles $\Lc_W$ given by Theorem \ref{thm:cycleLcLcu}  satisfy the
following properties which are also satisfied by the algebraic cycles given by  Theorem
\ref{thm:qfcycle} (with the appropriate changes in notations). Let $W$ be a
Lyndon word with $W\neq 0, 1$ and let  $x_0$ be a point  in $\A^1(\Q)$:
\begin{itemize}
\item When $W=0\cdots 01$ ($n-1$ zero),  $\Lc_W|_{x=x_0}$ is the
  algebraic cycle corresponding to the polylogarithm $Li_n^{\C}(x_0)$ given by
  Bloch and Kriz in \cite{BKMTM}.
\item The differential equations satisfied by the cycles $\Lc_W$ are closely related to the
  ones satisfied by the multiple polylogarithms. It is clear when
  $W=0...01$. For the others, let's recall that the differential equation comes
  from Ihara's cobracket, that is from Ihara's action of the fundamental group
  of $\ps$ on  itself. Another point of view is that the differential
  equation for $\Lc_W$ and hence for $\Lc_W|_{x=x_0}$  gives the coproduct of
  the induced element ${\left(\Lc_W|_{x=x_0}\right)}^B$ in   the $\HH^0$ of the
  bar construction over $ \cNg{\Spec(\Q)}{1}$. 
  This coproduct is Goncharov's motivic coproduct
  (cf. \cite{LEVTMFG})   on Deligne-Goncharov  motivic fundamental group
  (cf. \cite{DG}); however this coproduct is given by the differential of
  iterated integral. This other approach also shows that the differential of the
  cycles $\Lc_W$ encode the differential equations of the multiple polylogarithms.
\item A Lyndon word $W$ may contain more than one $1$ ; that is
  \emph{multiple} polylogarithms are present. 
\item For any Lyndon word $W$ and any $x_0$ as above, the cycle $\Lc_W|_{x=x_0}$
  is admissible, that is in $\cNg{\Spec(\Q)}{1}$. Hence, specializing at
  $x_0=1$, multiple zeta values are present.
% \item On low weight examples (weight $3$, $4$ and $5$) the integral associated
%   to the cycle $\Lc_W|_{x=x_0}$ is a linear combination of multiple
%   polylogarithms coverging to a linear 
\end{itemize}
%These 
%}

%\blue{
The  above reasons, together with the integral computed in the previous section,
lead the author to believe that cycles $\Lc_W|_{x=1}$ correspond to
multiple zeta values.
% : at least, the family of   associated integrals should be a set
%of generators for the $\Q$ vector space generated by the multiple zeta values (modulo
%shuffle products).
%}

%\blue{
In this direction, the author proved that the  bar elements associated to the
family $\Lcqf W$% in the bar construction of $\Ngqf{\ps}{\bullet}$ 
give a basis of% the
%motivic Lie coalgebra over $\ps$ (up to constant cycle) 
%relatively to the motivic Lie coalgebra over
%$\Spec(\Q)$ 
 Deligne-Goncharov
motivic fundamental group (see \cite{SouBarbase}). 
%; thus it gives a basis of Deligne-Goncharov
%motivic fundamental group. 
It remains to explicitly compute the periods of these
motives, that is the associated integral. This question will be addressed in a
future paper. 
%}
\bibliographystyle{amsalpha}
\bibliography{cyclecomplex}
\end{document}